\newcommand{\df}{\dfrac}
\newcommand{\tf}{\tfrac}
\renewcommand{\Re}{\operatorname{Re}}
\newcommand{\e}{\epsilon}
\renewcommand{\(}{\left\(}
\renewcommand{\)}{\right\)}
\renewcommand{\[}{\left\[}
\renewcommand{\]}{\right\]}
\renewcommand{\i}{\infty}
\numberwithin{equation}{section}
\theoremstyle{plain}
\newtheorem{theorem}{Theorem}[section]
\newtheorem{lemma}[theorem]{Lemma}
\newtheorem{corollary}[theorem]{Corollary}
\newtheorem{proposition}[theorem]{Proposition}
\newtheorem{remark}[]{Remark}
\def\proof{\@ifnextchar[{\@oproof}{\@nproof}}
\def\@oproof[#1][#2]{\trivlist\item[\hskip\labelsep\textit{#2 Proof of\
		#1.}~]\ignorespaces}
\def\@nproof{\trivlist\item[\hskip\labelsep\textit{Proof.}~]\ignorespaces}
\def\@tocline#1#2#3#4#5#6#7{\relax
	\ifnum #1>\c@tocdepth 
	\else
	\par \addpenalty\@secpenalty\addvspace{#2}%
	\begingroup \hyphenpenalty\@M
	\@ifempty{#4}{%
		\@tempdima\csname r@tocindent\number#1\endcsname\relax
	}{%
		\@tempdima#4\relax
	}%
	\parindent\z@ \leftskip#3\relax \advance\leftskip\@tempdima\relax
	\rightskip\@pnumwidth plus4em \parfillskip-\@pnumwidth
	#5\leavevmode\hskip-\@tempdima
	\ifcase #1
	\or\or \hskip 1em \or \hskip 2em \else \hskip 3em \fi%
	#6\nobreak\relax
	\dotfill\hbox to\@pnumwidth{\@tocpagenum{#7}}\par
	\nobreak
	\endgroup
	\fi}
\begin{document}
	\title[]{The Mordell-Tornheim zeta function: Kronecker limit type formula, Series Evaluations and Applications}
	\author{Sumukha Sathyanarayana}
	\author{N. Guru Sharan }
	\thanks{2020 \textit{Mathematics Subject Classification.} Primary 11M32, 30D30; Secondary 39B32.\\
	\textit{Keywords and phrases.} Mordell-Tornheim zeta function, Kronecker limit formula, modular relation, series evaluation.\\
	Abbreviated title: Kronecker limit type formulas and their applications}
	\address{Department of Mathematics, Central University of Karnataka, Kadaganchi, Kalaburagi, Karnataka-585367, India.}
	\email{sumukhas@cuk.ac.in, neerugarsumukha@gmail.com}
	\address{Department of Mathematics, Indian Institute of Technology Gandhinagar, Palaj, Gandhinagar, Gujarat-382355, India.}
	\email{gurusharan.n@iitgn.ac.in, sharanguru5@gmail.com}

	\begin{abstract}
 	In this paper, we establish Kronecker limit type formulas for the Mordell-Tornheim zeta function $\Theta(r,s,t,x)$ as a function of the second as well as the third arguments. As an application of these formulas, we obtain results of Herglotz, Ramanujan, Guinand, Zagier and Vlasenko-Zagier as corollaries. We show that the Mordell-Tornheim zeta function lies centrally between many modular relations in the literature, thus providing the means to view them under one umbrella. We also give series evaluations of $\Theta(r,s,t,x)$ in terms of Herglotz-Zagier function, Vlasenko-Zagier function and their derivatives. Using our new perspective of modular relations, we obtain a new infinite family of results called mixed functional equations.
	\end{abstract}
	\maketitle
	\tableofcontents
	\section{Introduction}
Let
\begin{align}
	\mathscr{D}:=\bigg\{(r,s,t) \in \mathbb{C}^3 \ \bigg| \Re(r+t)>1, \Re(s+t)>1 , \Re(r+s+t)>2\bigg\}. \label{Dregion}
\end{align}
 In the region $\mathscr{D},$ the Mordell-Tornheim zeta function is given by the absolutely convergent series  
	\begin{align}
	\zeta_{\textup{MT}} (r,s,t):=\sum_{n=1}^{\infty} \sum_{m=1}^{\infty} \frac{1}{n^{r} m^{s} (n+m)^{t}}.
	\end{align}
The function $\zeta_{\textup{MT}} (r,s,t)$ can be meromorphically continued to the entire $\mathbb{C}^3$ space, and its singular hyperplanes are given by $r+t=1-\ell, \ s+t=1-\ell,$ for any $\ell \in \mathbb{N}\cup\{0\}$ and by $r+s+t=2$. Matsumoto \cite{matsumoto} has given some details on the hyperplanes. The behavior of $\zeta_{\textup{MT}} (r,s,t)$ around its singularities is largely unexplored. In a recent work with Dixit \cite{dss2}, the authors considered its generalization
\begin{align}
	\Theta(r,s,t,x):=\sum_{n=1}^{\infty}  \sum_{m=1}^{\infty}\frac{1}{n^{r}m^{s}(n+mx)^{t}}, \label{Tdef}
\end{align}
for $x>0$ and $(r,s,t) \in \mathscr{D}$ as defined in \eqref{Dregion}. Clearly, $\Theta(r,s,t,1) =\zeta_{\textup{MT}} (r,s,t)$. One can easily observe that $\Theta(r,s,t,x)$ satisfies the following functional equations for $x>0$ and $(r,s,t) \in \mathscr{D}$:
\begin{align}
	&\Theta(r,s,t,x) =  \Theta(r-1,s,t+1,x)+x \hspace{0.1 cm} \Theta(r,s-1,t+1,x). \label{Tsplit}  \\
	&	\Theta(r,s,t,x)= x^{-t} \Theta(s,r,t,\tfrac{1}{x}).  \label{Tinv}
\end{align}

 The meromorphic continuation of $\Theta(r,s,t,x)$ can be done similarly as in the case of $\zeta_{\textup{MT}} (r,s,t)$ \cite[Theorem 1]{matsumoto}. Recently, it has been shown that \cite[Theorem 1]{dss2}, as $t \to 0$,
	\begin{align}\label{principal part equation}
		\Theta(1,1,t,x)=\frac{2}{t^2}+\frac{2\gamma-\log(x)}{t}+\gamma^2-\gamma\log(x)-\frac{\pi^2}{6}+O(|t|),
	\end{align}
	where $\gamma$ is Euler's constant. Results of this kind did not exist in the literature prior to this. Moreover, one can also derive the higher Laurent coefficients of 	$\Theta(1,1,t,x)$ as a finite sum of Arakawa and Kaneko constants \cite[Corollary 5]{arakawa-kaneko} 
	\begin{equation}
		c_k:=\frac{1}{k!}\int_{0}^{\infty}\left\{\log^{k}(y)-\log^{y}\left(\frac{y}{1-e^{-y}}\right)\right\}\left(\frac{1}{e^y-1}-\frac{1}{y}\right)\, dy,  \label{AK}
	\end{equation}
and integrals of the form 	
\begin{align}
	L_k^*(x):=	\sum_{j=0}^{k}a_{k+1, j}\vspace{1mm}	\int_{0}^{\infty} \frac{\log^{j}(y)}{y} \log \left( \frac{1-e^{-xy}}{1-e^{-y}} \right) \log \left( \frac{1-e^{-y}}{y}  \right) \, dt \label{Lk*}
\end{align}
for $ k \in \mathbb{N}$. For finer details, see the proofs of Theorem 3.4 and Proposition 3.7 in \cite{dss2}. Recently, Matsumoto, Onodera and Sahoo, have derived a general asymptotic result for the Mordell-Tornheim multiple-zeta function \cite{dilip}. 

Consider the real binary quadratic form $Q(x,y) =ax^2 +bx y+cy^2$ with discriminant $D= b^2-4ac$. For $a>0$, the associated zeta function is defined by
\begin{align*}
	\zeta_{Q}(s)=\sum_{(m,n) \ne (0,0)} Q(m,n)^{-s}
\end{align*}
where the sum runs over all the non-origin cordinates of the lattice $\mathbb{Z}\times\mathbb{Z}$. $\zeta_{Q}(s)$ converges absolutely for $\Re(s)>1$. For $D<0$, Kronecker proved that, around $s=1$,
\begin{align}
|D|^{\frac{s}{2}}	\zeta_{Q}(s)= \frac{2 \pi}{s-1} + 2 \pi \left(2 \gamma - \log 4 + H\left(\frac{-b+ \sqrt{D}}{2 a }\right)\right) + O(|s-1|), \label{KLF}
\end{align}
where $H(z) = - \log\left(y |\eta(z)|^4 \right),$ with $\eta(z)$ being the Dedekind eta function. The result \eqref{KLF} is famously known as the Kronecker limit formula. Siegel \cite{siegel} connected the Kronecker limit formula with the theory of $L$-functions. Over the century, several prominent mathematicians such as Hecke \cite{hecke}, Herglotz \cite{herglotz}, Ramachandra \cite{rama}, Stark \cite{stark} and Zagier \cite{zagier} gave important contributions to this field. Note that \eqref{principal part equation} is a Kronecker limit type formula for the function $\Theta(1,1,t,x)$ in the variable $t$, around $t=0$.

In Section 2, we give Kronecker limit type formulas for $\Theta(r,s,t,x)$ in $t$, i.e., the \emph{third variable}, in Theorem \ref{1stpolar}. For a fixed $r,s \in \mathbb{C}$, only possibilities of poles for $\Theta(r,s,t,x)$ in $t$ are at $t=1-r-\ell$, $t=1-s-\ell$, for $\ell \in \mathbb{N} \cup\{0\}$ and $t=2-r-s$. However, we show that all of these points need not have a pole necessarily. As a direct consequence of our Kronecker limit type formula, the singular part of the Laurent series of $\Theta(r,s,t,x)$ in $t$ is established. Interestingly, we show that the order of pole too varies by cases. $\Theta(r,s,t,x)$ can have double, single or no pole in $t$, depending on the choices of $r$ and $s$. This method cannot be used to obtain the Kronecker limit type formula for the $\Theta(r,s,t,x)$ in the \emph{second variable}. 
\subsection{Modular relations in the literature}

Let $\Re(x)>0$.  A transformation of the form $F(x)= F(1/x)$ is called called a modular relation. Equivalently, we can also write it as form $F(-1/z)=F(z)$, where $z\in\mathbb{H}$, the upper half complex plane.  

Ramanujan \cite{bcbad} had recorded the following interesting Modular relation, on page 220 of Lost Notebook:
For $x>0$, let
\begin{equation*}
	\phi(x):=\psi(x)+\frac{1}{2x}-\log(x),
\end{equation*}
where $\psi(x)=\frac{\Gamma'\hspace{-0.05 cm}(x)}{\Gamma(x)}$, is the logarithmic derivative of the Gamma function  $\Gamma(x)$. Then, 
\begin{align}
	\sqrt{x}\left\{\dfrac{\gamma-\log(2\pi x)}{2 x}+\sum_{n=1}^{\i}\phi(n x)\right\}
	&=\sqrt{\frac{1}{x}}\left\{\df{x(\gamma-\log\left(\frac{2\pi}{x}\right))}{2}+\sum_{n=1}^{\i}\phi\left(\frac{n}{x}\right)\right\} \notag \\
	&=-\df{1}{\pi^{3/2}}\int_0^{\i}\left|\Xi\left(\df{1}{2}y\right)\Gamma\left(\df{-1+iy}{4}\right)\right|^2
	\df{\cos\left(\tf{1}{2}y\log x\right)}{1+y^2}\, dy,  \label{w1.26}
\end{align}
where
\begin{align*}
	\Xi(y)&=\xi\left(\frac{1}{2}+i y\right),\\
	\xi(s)&=\frac{1}{2}s(s-1)\pi^{-\frac{s}{2}}\Gamma\left(\frac{s}{2}\right)\zeta(s),
\end{align*}
are the Riemann's functions. Dixit \cite[Theorem 4.1]{Analogues} obtained a generalization of the above result in the variable $z$, with the Hurwitz zeta function $\zeta(z,x)$ in the place of $\psi(x)$ in the first equality, the modular relation.

Guinand \cite{apg3} gave modular relations which are analogous to the first equality in \eqref{w1.26} involving derivatives of digamma function $\psi(x)$. For $z\in\mathbb{N}$ and $z>2$, he showed that,
	
	\begin{align}\label{guinand2}
		x^{\frac{z}{2}}\sum_{j=1}^{\infty}\psi^{(z-1)}(1+jx)=\bigg(\frac{1}{x}\bigg)^{\frac{z}{2}}\sum_{j=1}^{\infty}\psi^{(z-1)}\bigg(1+\frac{j}{x}\bigg).
	\end{align}
Also, analogously, for the first derivative, he proved the identity,
	\begin{align}\label{guinand1}
		x\sum_{j=1}^{\infty}\left(\psi'(1+jx)-\frac{1}{jx}\right)-\frac{1}{2}\log(x)=	\frac{1}{x}\sum_{j=1}^{\infty}\left(\psi'\bigg(1+\frac{j}{x}\bigg)-\frac{1}{\frac{j}{x}}\right)-\frac{1}{2}\log\bigg(\frac{1}{x}\bigg)
	\end{align}
Another identity which is closely related to \eqref{w1.26} is the two term functional equation of the Herglotz-Zagier function $F(x).$
The function $F(x)$ is defined as
\begin{equation}\label{herglotzdef}
	F(x):=\sum_{n=1}^{\infty}\frac{\psi(nx)-\log(nx)}{n}\hspace{5mm}\left(x\in\mathbb{C}\backslash(-\infty,0]\right).
\end{equation} 
This function was studied for the first time by Herglotz \cite{herglotz}. Later, Zagier studied this in connection with Kronecker limit formula for real quadratic fields \cite{zagier}. From the asymptotic expansion of $\psi(x)$ as $x\to \infty$, 
 we can see that that the series in \eqref{herglotzdef} converges absolutely for any $x\in\mathbb{C}\backslash(-\infty,0]$. Zagier \cite[Equations (7.4), (7.8)]{zagier} proved the following two- and three-term functional equations for $F(x)$:
\begin{align}
	&F(x)+F\left(\frac{1}{x}\right)=2F(1)+\frac{1}{2}\log^{2}(x)-\frac{\pi^2}{6x}(x-1)^2,\label{fe2}\\
	&F(x)-F(x+1)-F\left(\frac{x}{x+1}\right)=-F(1)+\textup{Li}_2\left(\frac{1}{1+x}\right),\label{fe1}
\end{align}
with \cite[Equation (7.12)]{zagier}
\begin{equation}\label{ef1}
	F(1)=-\frac{1}{2}\gamma^2-\frac{\pi^2}{12}-\gamma_1,
\end{equation}
where $\gamma_1$ is the first Stieltjes constant. Observe that the two-term functional equation \eqref{fe2} of Herglotz-Zagier function resembles the first equality of Ramanujan's modular transformation \eqref{w1.26}. For any integer $r
\geq2$, Vlasenko and Zagier \cite{vz} defined the higher Herglotz function $F_r(x)$, by the following absolutely convergent series:
\begin{equation}\label{higherherglotz}
	F_r(x):=\sum_{n=1}^{\infty}\frac{\psi(nx)}{n^r}\hspace{5mm}\left(x\in\mathbb{C}\backslash(-\infty,0]\right).
\end{equation} 
They further showed that $F_r(x)$ satisfies two- and three-term functional equations analogous \cite[Proposition 4]{vz} to \eqref{fe2} and \eqref{fe1}, given by,
\begin{align}
	&F_{r}(x) + (-x)^{r-1} F_{r}\left( \frac{1}{x} \right) = \zeta(r+1) \left( (-x)^{r} -\frac{1}{x} \right) - \sum_{\ell=1}^{r} \zeta(\ell) \zeta(r-\ell+1) (-x)^{\ell-1}, \label{vz2term} \\
	&F_{r}(x) - F_{r}(x+1) + (-x)^{r-1} F_{r} \left( \frac{x+1}{x} \right) = \zeta(r+1) \left( \frac{(-x)^{r}}{x+1}-\frac{1}{x} \right) - \sum_{\ell=1}^{r} \zeta(r-\ell+1, \ell) (-x)^{\ell-1}, \label{vz3term}
\end{align}
where the double zeta function $\zeta(s_1,s_2)$ is defined \cite[Section 1.1]{vz} by,
\begin{align}
	\zeta(s_1,s_2) = \sum_{n=1}^{\infty} \sum_{m=n}^{\infty} \frac{1}{m^{s_1} n^{s_2}}, \hspace{1 cm} \textup{(for $s_1\geq2$ and $s_2\geq1$).} \label{doublezetadef}
\end{align}
	
In this paper, we discover a new way to obtain these modular relations using the generalized Mordell-Tornheim zeta function $\Theta(r,s,t,x)$, and its properties \eqref{Tsplit} and \eqref{Tinv}. This facilitates us to develop a unified method to obtain the aforementioned two-term functional equations. The modular results discovered by Guinand, Ramanujan, Herglotz, Zagier and Vlasenko-Zagier are thus inherently connected in this way. Moreover, this approach shows that Mordell-Tornheim zeta function lies centrally between these modular relations. See Sections \ref{Section3} and \ref{SecTable} for details.

In the Section \ref{Section4}, we give the series evaluations of $\Theta(r,s,t, x)$ in terms of zeta values, $\psi(x)$ and its derivatives, in Theorem \ref{THM-EVAL}. We further use Theorem \ref{THM-EVAL} to get \emph{mixed functional equations} in \eqref{mixed} as a corollary, which have not appeared in the literature. We call them mixed functional equations since they contain terms of the form both 
\begin{align*}
	\sum_{m=1}^{\infty} \frac{\psi^{(a)}(mx+1)}{m^{b}}\hspace{1cm}\text{ and} \hspace{1cm}
	\sum_{m=1}^{\infty} \frac{\gamma + \psi(mx+1)}{m^{c}},
\end{align*}
for $a,b,$ and $c \in \mathbb{N}$, which appear in the Guinand's functional equation and the Vlasenko-Zagier's functional equation respectively. Of the infinite family of new identites we can obtain, one such new is, as explained in Remark \ref{REMARK1},
\begin{align}
	-\sum_{m=1}^{\infty} \frac{\psi'(mx+1)}{m^2} + 	\sum_{m=1}^{\infty} \frac{\psi'(\frac{m}{x}+1)}{m^2} + \frac{2}{x} \sum_{m=1}^{\infty} \frac{\gamma+\psi(mx+1)}{m^3} = \zeta^2(2). \label{newIDex}
\end{align} 

Also as a corollary of Theorem \ref{THM-EVAL}, we derive a Kronecker limit type formula, as stated in Theorem \eqref{KLF in s}, for $\Theta(r,s,t, x)$ in the \emph{second variable}.
	\section{Kronecker Limit type formula for $\Theta(r,s,t,x)$ in the Third variable} \label{Sec2}
We first prove the Proposition \ref{first}, and its corollary \ref{second}, before using them to prove the main results of the Section-the Kronecker Limit type formula in the \emph{third variable}, stated below in three Theorems \ref{1stpolar}, \ref{2ndpolar} and \ref{3rdpolar}. Theorems \ref{1stpolar} and \ref{2ndpolar} are subdivided into four cases each, whose casewise classification can be better understood by the flowcharts in Figure \ref{ExPic6} and \ref{ExPic7} respectively. Theorems \ref{1stpolar} and \ref{2ndpolar} pertains to singularities of the type $r+t=1-\ell$ for $\ell \in \mathbb{N} \cup \{0\}$ while Theorem \ref{3rdpolar} pertains to singularities of the type $r+s+t=2$.

	\begin{figure}[h!]
	\centering
	\begin{minipage}{.5\textwidth}
		\centering
		\begin{tikzpicture}[description/.style={fill=white,inner sep=2pt}]
			\useasboundingbox (2.3,3.5) rectangle (9.8,9.5);
			\scope[transform canvas={scale=0.8}]
			\node at (6,10.5) {$s \in \mathbb{C}$};
			\draw (5.4,10.2) rectangle (6.6,10.8);
			\node at (4,9) {$s \notin \mathbb{Z}$};
			\draw (3.4,8.7) rectangle (4.6,9.3);
			\node at (8,9) {$s \in \mathbb{Z}$};
			\draw (7.4,8.7) rectangle (8.6,9.3);
			\node at (6,7.5) {$s> r+\ell$};
			\draw (5.15,7.2) rectangle (6.85,7.8);
			\node at (10,7.5) {$s\leq r+\ell$};
			\draw (9.15,7.2) rectangle (10.85,7.8);
			\node at (5,6) {$s\ne \ell+1$};
			\draw (4.2,5.7) rectangle (5.8,6.3);
			\node at (7,6) {$s=\ell+1$};
			\draw (6.2,5.7) rectangle (7.8,6.3);
			\node at (9,6) {$s \in \mathbb{Z}\hspace{-0.07 cm} \setminus \hspace{-0.07 cm}\mathbb{N}$};
			\draw (8.2,5.7) rectangle (9.8,6.3);
			\node at (11,6) {$s\in \mathbb{N}$};
			\draw (10.2,5.7) rectangle (11.8,6.3);
			\draw (6,9.75) -- (6,10.2);
			\draw (6,9.75) -| (8,9.3);
			\draw (6,9.75) -| (4,9.3);
			\draw (8,8.7) -- (8,8.25);
			\draw (8,8.25) -| (10,7.8);
			\draw (8,8.25) -| (6,7.8);
			\draw (10,6.75) -- (10,7.2);
			\draw (10,6.75) -| (11,6.3);
			\draw (10,6.75) -| (9,6.3);
			\draw (6,6.75) -- (6,7.2);
			\draw (6,6.75) -| (7,6.3);
			\draw (6,6.75) -| (5,6.3);
			\node at (4,8.4) {Case I};
			\node at (7,5.4) {Case II};
			\node at (5,5.4) {Case I};
			\node at (9,5.4) {Case III};
			\node at (11,5.4) {Case IV};
			\endscope
		\end{tikzpicture}
		\caption{The cases for $r \in \mathbb{Z} \setminus \mathbb{N}$.}
		\label{ExPic6}
	\end{minipage}%
	\begin{minipage}{.5\textwidth}
		\centering
		\begin{tikzpicture}[description/.style={fill=white,inner sep=2pt}]
			\useasboundingbox (2.3,3) rectangle (10.8,9);
			\scope[transform canvas={scale=0.8}]
			\node at (6,10.5) {$s \in \mathbb{C}$};
			\draw (5.4,10.2) rectangle (6.6,10.8);
			\node at (4,9) {$s \notin \mathbb{Z}$};
			\draw (3.4,8.7) rectangle (4.6,9.3);
			\node at (8,9) {$s \in \mathbb{Z}$};
			\draw (7.4,8.7) rectangle (8.6,9.3);
			\node at (6,7.5) {$s> r+\ell$};
			\draw (5.15,7.2) rectangle (6.85,7.8);
			\node at (10,7.5) {$s\leq r+\ell$};
			\draw (9.15,7.2) rectangle (10.85,7.8);
			\node at (8.5,6) {$s \in \mathbb{Z}\hspace{-0.07 cm} \setminus \hspace{-0.07 cm}\mathbb{N}$};
			\draw (7.7,5.7) rectangle (9.3,6.3);
			\node at (11.5,6) {$s\in \mathbb{N}$};
			\draw (10.7,5.7) rectangle (12.3,6.3);
			\node at (10.5,4.5) {$s\ne \ell+1$};
			\draw (9.7,4.2) rectangle (11.3,4.8);
			\node at (12.5,4.5) {$s=\ell+1$};
			\draw (11.7,4.2) rectangle (13.3,4.8);
			\draw (6,9.75) -- (6,10.2);
			\draw (6,9.75) -| (8,9.3);
			\draw (6,9.75) -| (4,9.3);
			\draw (8,8.7) -- (8,8.25);
			\draw (8,8.25) -| (10,7.8);
			\draw (8,8.25) -| (6,7.8);
			\draw (10,6.75) -- (10,7.2);
			\draw (10,6.75) -| (11.5,6.3);
			\draw (10,6.75) -| (8.5,6.3);
			\draw (11.5,5.7) -- (11.5,5.25);
			\draw (11.5,5.25) -| (12.5,4.8);
			\draw (11.5,5.25) -| (10.5,4.8);
			\node at (4,8.4) {Case I};
			\node at (6,6.9) {Case I};
			\node at (8.5,5.4) {Case II};
			\node at (10.5,3.9) {Case III};
			\node at (12.5,3.9) {Case IV};
			\endscope
		\end{tikzpicture}
		\caption{The cases for $r \in \mathbb{N}$.}
		\label{ExPic7}
	\end{minipage}
\end{figure}

	\begin{proposition}\label{first}
		Let $r,s,t \in \mathbb{C}$ such that $r \not\in \mathbb{N}$, and \\ $\Re(t)> \textup{max} \, \{0,1- \Re(r), 1- \Re(s), 2-\Re(r)-\Re(s)\}$. For any $M \in \mathbb{N} \cup \{0\} $ and $x>0$, we have
		\begin{align}\label{PE}
			&\frac{1}{\Gamma(t)} \int_{0}^{\infty} y^{t-1} \textup{Li}_s (e^{-xy}) \Bigg( \textup{ Li}_r (e^{-y}) - \Gamma(1-r) y^{r-1} - \sum_{\substack{k=0 }}^{M} (-1)^k\zeta(r-k) \frac{y^k}{k!}\Bigg) \, dy \notag \\
			&=  \Theta(r,s,t,x) - \frac{\Gamma(1-r)\Gamma(t+r-1)}{ x^{t+r-1}\Gamma(t)}\zeta(t+s+r-1)    - \sum_{\substack{k=0 }}^{M} \frac{ (-1)^k}{k!} \frac{\zeta(r-k)\Gamma(t+k)}{ x^{t+k}\Gamma(t)} \zeta(t+k+s).
		\end{align} 
	\end{proposition}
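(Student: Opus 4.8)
The plan is to derive \eqref{PE} from a Mellin-type integral representation of $\Theta(r,s,t,x)$, paralleling the derivation of \eqref{principal part equation} in \cite{dss2}. First I would use the elementary identity $(n+mx)^{-t}=\Gamma(t)^{-1}\int_0^{\infty}y^{t-1}e^{-(n+mx)y}\,dy$, valid since $\Re(t)>0$ and $n+mx>0$, substitute it into the defining series \eqref{Tdef}, and interchange summation and integration. This interchange is justified by Tonelli's theorem: the hypothesis $\Re(t)>\max\{0,1-\Re(r),1-\Re(s),2-\Re(r)-\Re(s)\}$ forces $(\Re(r),\Re(s),\Re(t))\in\mathscr{D}$, so $\sum_{n,m\ge1}n^{-\Re(r)}m^{-\Re(s)}(n+mx)^{-\Re(t)}$ converges. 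Recognizing $\sum_{n\ge1}n^{-r}e^{-ny}=\textup{Li}_r(e^{-y})$ and similarly for the $m$-sum, this gives
\begin{equation*}
\Theta(r,s,t,x)=\frac{1}{\Gamma(t)}\int_0^{\infty}y^{t-1}\,\textup{Li}_r(e^{-y})\,\textup{Li}_s(e^{-xy})\,dy.
\end{equation*}

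Next I would invoke Jonquière's inversion formula for the polylogarithm: since $r\notin\mathbb{N}$, for $0<y<2\pi$ one has $\textup{Li}_r(e^{-y})=\Gamma(1-r)y^{r-1}+\sum_{k=0}^{\infty}\frac{(-1)^k\zeta(r-k)}{k!}y^k$. Hence, for each $M\in\mathbb{N}\cup\{0\}$, the remainder
\begin{equation*}
R_M(y):=\textup{Li}_r(e^{-y})-\Gamma(1-r)y^{r-1}-\sum_{k=0}^{M}\frac{(-1)^k\zeta(r-k)}{k!}y^k
\end{equation*}
satisfies $R_M(y)=O(y^{M+1})$ as $y\to0^+$ and grows at most polynomially as $y\to\infty$. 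I then split $\textup{Li}_r(e^{-y})$ inside the integral as $R_M(y)+\Gamma(1-r)y^{r-1}+\sum_{k=0}^{M}\frac{(-1)^k\zeta(r-k)}{k!}y^k$ and check that, in the stated region, each of the three resulting integrals against $y^{t-1}\textup{Li}_s(e^{-xy})$ converges absolutely. Near $y=\infty$ this is immediate from $\textup{Li}_s(e^{-xy})=O(e^{-xy})$ (recall $x>0$); near $y=0$ one uses $\textup{Li}_s(e^{-xy})=O\big(y^{\min\{\Re(s)-1,0\}}\big)$ (up to a logarithmic factor when $\Re(s)=1$) together with the inequalities $\Re(t)>0$, $\Re(t)>1-\Re(r)$, $\Re(t)>2-\Re(r)-\Re(s)$ for the two power terms, while the $O(y^{M+1})$ bound makes the $R_M$-integral convergent automatically.

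Finally I would evaluate the two power-term integrals via $\int_0^{\infty}y^{a-1}\textup{Li}_s(e^{-xy})\,dy=\Gamma(a)x^{-a}\zeta(s+a)$, which follows by integrating the series for $\textup{Li}_s$ term by term, valid once $\Re(a)>\max\{0,1-\Re(s)\}$ (true for $a=t+r-1$ by $\Re(t)>\max\{1-\Re(r),2-\Re(r)-\Re(s)\}$, and for $a=t+k$ with $0\le k\le M$ by $\Re(t)>\max\{0,1-\Re(s)\}$). This yields the contributions $\frac{\Gamma(1-r)\Gamma(t+r-1)}{x^{t+r-1}\Gamma(t)}\zeta(t+s+r-1)$ and $\frac{(-1)^k\zeta(r-k)\Gamma(t+k)}{k!\,x^{t+k}\Gamma(t)}\zeta(t+k+s)$; moving them to the right-hand side of the displayed integral representation leaves precisely $\Gamma(t)^{-1}\int_0^{\infty}y^{t-1}\textup{Li}_s(e^{-xy})R_M(y)\,dy$ on the left, which is \eqref{PE}. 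The main point requiring care is the bookkeeping in the splitting and in the term-by-term integration: one must confirm that the single lower bound imposed on $\Re(t)$ is simultaneously strong enough to make every piece absolutely convergent across the different ranges of $\Re(r)$ and $\Re(s)$ relative to $1$. Once the polylogarithm expansion is in hand, the rest is routine.
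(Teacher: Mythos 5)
Your proposal is correct and follows essentially the same route as the paper: a Gamma-integral (Mellin) representation of $\Theta(r,s,t,x)$, the expansion $\textup{Li}_r(e^{-y})=\Gamma(1-r)y^{r-1}+\sum_{k\ge0}(-1)^k\zeta(r-k)y^k/k!$ near $y=0$, subtraction of the singular and polynomial parts, and evaluation of the resulting integrals as $\Gamma(a)x^{-a}\zeta(s+a)$, with the same convergence bounds near $y=0$ and $y=\infty$ justifying the interchanges. The only difference is cosmetic: you pass directly to the double-polylogarithm integral and then split, whereas the paper first treats the single $n$-sum, subtracts, and only then sums over $m$.
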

	
	\begin{proof}
		For $\Re(t),x,n>0$, the definition of the Gamma function $\Gamma(t)$ upon a change of variable gives,
		\begin{align*}
			\int_{0}^{\infty} y^{t-1} e^{-(x+n)y} \, dy = (x+n)^{-t} \Gamma(t).
		\end{align*} 
		Since $\textup{Re}(r+t)>1$, divide both sides of the equation by $n^r$ and sum over $n \in \mathbb{N}$ and interchange the order of summation and integration to get,
		\begin{align}
			\int_{0}^{\infty} y^{t-1} e^{-xy} \textup{ Li}_r (e^{-y})  \, dy &= \Gamma(t) \sum_{n=1}^{\infty} \frac{1}{n^r (x+n)^{t}}, \label{SI}
		\end{align} 
		where $\textup{Li}_r(z):=\sum_{u=1}^{\infty} \frac{z^u}{u^r}$ is the polylogarithm function.  For $r \not\in \mathbb{N}$, around $y=0$, as stated in \cite[Equation 8, p.29]{Erdelyi}, we have,
		\begin{align}\label{LiE}
			\textup{ Li}_r (e^{-y}) = \Gamma(1-r) y^{r-1}+ \sum_{\substack{k=0 }}^\infty (-1)^k\zeta(r-k) \frac{y^k}{k!} = O(y^{\textup{max}(0,\textup{Re}(r)-1)}). 
		\end{align}
		The bound in \eqref{LiE} along with $\Re(t)> \textup{max} \, \{0, 1-\Re(r)\}$ ensures the absolute convergence, which in turn allows the interchange using  \cite[Theorem 2.1]{temme}. 
		Using \eqref{SI} and the definition of $\Gamma(t)$ twice, we see that,
		\begin{align}\label{after eva notnat}
			&\int_{0}^{\infty} y^{t-1} e^{-xy} \Bigg( \textup{ Li}_r (e^{-y})- \Gamma(1-r) y^{r-1} - \sum_{\substack{k=0 }}^{M} (-1)^k\zeta(r-k) \frac{y^k}{k!}\Bigg) \, dy \notag \\
			&= \Gamma(t) \sum_{n=1}^{\infty} \frac{1}{n^r (x+n)^{t}} - \frac{\Gamma(1-r)\Gamma(t+r-1)}{x^{t+r-1}}  - \sum_{\substack{k=0 }}^{M} \frac{ (-1)^k}{k!} \frac{\zeta(r-k)\Gamma(t+k)}{x^{t+k}}.
		\end{align}
		Replace $x$ by $mx$ in \eqref{after eva notnat}, divide both the sides of resulting equation by $m^s \Gamma(t)$, sum over $m \in \mathbb{N}$ and then interchange the order of summation and integration to get,
		\begin{align}\label{final not nat}
			&\frac{1}{\Gamma(t)} \int_{0}^{\infty} y^{t-1} \textup{Li}_s (e^{-xy}) \Bigg( \textup{ Li}_r (e^{-y}) - \Gamma(1-r) y^{r-1} - \sum_{\substack{k=0 }}^{M} (-1)^k\zeta(r-k) \frac{y^k}{k!}\Bigg) \, dy \notag \\
			&= \Theta(r,s,t, x)  - \frac{\Gamma(1-r)\Gamma(t+r-1)}{ x^{t+r-1}\Gamma(t)}\zeta(t+s+r-1)    - \sum_{\substack{k=0 }}^{M} \frac{ (-1)^k}{k!} \frac{\zeta(r-k)\Gamma(t+k)}{ x^{t+k}\Gamma(t)} \zeta(t+k+s).
		\end{align} 
		We justify the interchange of the order of summation and integration as follows: From \eqref{LiE}, we have,
		\begin{align}\label{BOr}
			\textup{ Li}_r (e^{-y}) - \Gamma(1-r) y^{r-1} - \sum_{\substack{k=0 }}^{M} (-1)^k\zeta(r-k) \frac{y^k}{k!} =  O(y^{M+1}), \hspace{0.5 cm} \textup{as}\quad y \to 0^+.
		\end{align}
		Similarly,
		\begin{equation}\label{BOs}
			\textup{ Li}_s (e^{-y})=
			\begin{cases}
				O(1),\hspace{5mm}&\text{if}\hspace{1mm}\Re(s) \ge 1,\\
				O(y^{\Re(s)-1}),&\text{if}\hspace{1mm}\Re(s) < 1,
			\end{cases}
			\hspace{0.5 cm}\textup{as} \quad y \to 0^+.
		\end{equation}
		Since $\Re(t)>\text{max}\, \{ 0, 1-\Re(s)\}$, from \eqref{BOr} and \eqref{BOs} we get,
		\begin{align}\label{BO0}
			y^{t-1} \textup{Li}_s (e^{-xy}) \bigg( \hspace{-0.2 cm} \textup{ Li}_r (e^{-y}) - \Gamma(1-r) y^{r-1} - \sum_{\substack{k=0 }}^{M} (-1)^k\zeta(r-k) \frac{y^k}{k!}\bigg) = O\Big(y^{M+\epsilon }\Big), 
		\end{align}
		for any $\epsilon >0$, as $y \to 0^+$. Also,  $\textup{ Li}_s (e^{-y})= O(y^{-K})$, for any $K>1$ as $ y \to \infty$. Therefore,  
		\begin{align}\label{BO1}
			y^{t-1} \textup{Li}_s (e^{-xy}) \bigg( \hspace{-0.15 cm} \textup{ Li}_r (e^{-y}) - \Gamma(1-r) y^{r-1} - \sum_{\substack{k=0 }}^{M} (-1)^k\zeta(r-k) \frac{y^k}{k!}\bigg) = O\Big(y^{\textup{max}\{\Re(t)+M-1-K,\, \Re(t)+r-2-K\} }\Big),
		\end{align}
		for any $K>1$, as $ y \to \infty$. Using \cite[Theorem 2.1]{temme} with the facts in \eqref{BO0} and \eqref{BO1}, we justify the interchange in \eqref{final not nat} to complete the proof.
	\end{proof}
	We now give the following result, analogous to Proposition \ref{first} for $r \in\mathbb{N}$ as a Corollary. 
	\begin{corollary} \label{second}
		Let $r \in \mathbb{N}$ and $s,t \in \mathbb{C}$ such that $\Re(t)> \textup{max} \, \{0,1- \Re(r), 1- \Re(s), 2-\Re(r)-\Re(s)\}$. For any $M \in \mathbb{N} \ \cup \{0\} $ with $M \ge r-1$, and $x>0$ we have,
		\begin{align}
			&\frac{1}{\Gamma(t)} \int_{0}^{\infty} y^{t-1} \textup{Li}_s (e^{-xy}) \Bigg( \textup{ Li}_r (e^{-y}) - (-1)^{r-1} (H_{r-1} -\log(y)) \frac{y^{r-1}}{(r-1)!} - \sum_{\substack{k=0 \\ k\ne r-1}}^{M} (-1)^k\zeta(r-k) \frac{y^k}{k!} \Bigg) \, dy \notag \\
			&=  \Theta(r,s,t,x)  - \sum_{\substack{k=0 \\ k\ne r-1}}^{M} \frac{ (-1)^k}{k!} \frac{\zeta(r-k)\Gamma(t+k)}{ x^{t+k}\Gamma(t)} \zeta(t+k+s) \notag  \\
			& \quad +  \frac{(-1)^{r}\Gamma(t+r-1)}{x^{t+r-1} (r-1)!\Gamma(t)}\Big(\zeta(t+s+r-1) \left( H_{r-1}  - \psi(t+r-1) + \log(x) \right)- \zeta'(t+s+r-1)\Big). \label{CE}
		\end{align} 
	\end{corollary}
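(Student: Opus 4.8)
The plan is to follow the proof of Proposition \ref{first} line by line, the only essential change being that the series expansion \eqref{LiE} of $\textup{Li}_r(e^{-y})$ around $y=0$ breaks down when $r\in\mathbb{N}$: there $\Gamma(1-r)$ has a pole, and it cancels against the pole of $\zeta(r-k)$ at $k=r-1$ (where $\zeta(r-k)=\zeta(1)$). The confluence of these two singularities produces a logarithmic term, and one has, for $r\in\mathbb{N}$ and small $y>0$,
\begin{equation*}
	\textup{Li}_r(e^{-y}) = (-1)^{r-1}\bigl(H_{r-1}-\log(y)\bigr)\frac{y^{r-1}}{(r-1)!} + \sum_{\substack{k=0\\k\ne r-1}}^{\infty}(-1)^k\zeta(r-k)\frac{y^k}{k!},
\end{equation*}
a standard expansion that can also be recovered from \eqref{LiE} by letting $r$ tend to an integer. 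Note that \eqref{SI} itself does not require $r\notin\mathbb{N}$; it holds whenever $\Re(r+t)>1$.

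Next I would truncate the expansion above after the $y^M$ term --- the hypothesis $M\ge r-1$ guarantees the logarithmic term is among those truncated --- substitute into \eqref{SI}, and evaluate the resulting elementary integrals. The polynomial pieces give $\int_0^\infty y^{t+k-1}e^{-xy}\,dy = \Gamma(t+k)x^{-(t+k)}$ exactly as in \eqref{after eva notnat}; the genuinely new ingredient is the logarithmic one,
\begin{equation*}
	\int_0^\infty y^{t+r-2}e^{-xy}\log(y)\,dy = \frac{\Gamma(t+r-1)}{x^{t+r-1}}\bigl(\psi(t+r-1)-\log(x)\bigr),
\end{equation*}
obtained by differentiating $\int_0^\infty y^{w-1}e^{-xy}\,dy=\Gamma(w)x^{-w}$ with respect to $w$ at $w=t+r-1$. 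Assembling these produces the $e^{-xy}$-analogue of \eqref{after eva notnat}, with the combination $H_{r-1}-\psi(t+r-1)+\log(x)$ replacing the term $\Gamma(1-r)\Gamma(t+r-1)x^{-(t+r-1)}$.

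Then I would replace $x$ by $mx$, divide by $m^s\Gamma(t)$, and sum over $m\ge1$, precisely as in the passage from \eqref{after eva notnat} to \eqref{final not nat}. The sums $\sum_{m\ge1}m^{-s}(mx)^{-(t+k)}=x^{-(t+k)}\zeta(s+t+k)$ reproduce the zeta factors; the one point needing attention is that $x\mapsto mx$ turns $\log(x)$ into $\log(x)+\log(m)$, and $\sum_{m\ge1}m^{-(s+t+r-1)}\log(m)=-\zeta'(s+t+r-1)$ --- this is the source of the $-\zeta'(t+s+r-1)$ term in \eqref{CE}. Recognizing $\sum_{m\ge1}m^{-s}e^{-mxy}=\textup{Li}_s(e^{-xy})$ on the integral side and $\Theta(r,s,t,x)$ on the series side then yields \eqref{CE}.

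Finally, the two interchanges of summation and integration are justified exactly as in Proposition \ref{first}: after the truncation the bracketed expression is $O(y^{M+1})$ as $y\to0^+$ (all terms through order $y^M$, logarithmic term included, having been removed), so that multiplying by $y^{t-1}\textup{Li}_s(e^{-xy})$ and using \eqref{BOs} gives an $O(y^{M+\epsilon})$ bound; while as $y\to\infty$, $\textup{Li}_r(e^{-y})$ decays exponentially and the subtracted terms grow at most like $y^{r-1}\log(y)$, so that, together with $\textup{Li}_s(e^{-xy})=O(y^{-K})$ for every $K>1$, the integrand has arbitrarily fast polynomial decay; \cite[Theorem 2.1]{temme} then applies at both stages under the stated lower bound on $\Re(t)$. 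The main thing to get right is the logarithmic expansion of $\textup{Li}_r$ at integer $r$ and the attendant $\log(y)$-integral, together with the $\log(m)\to-\zeta'$ bookkeeping; once these are in place the argument is a transcription of the proof of Proposition \ref{first}.
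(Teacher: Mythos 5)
Your proposal is correct, but it takes a genuinely different route from the paper. The paper obtains Corollary \ref{second} by treating both sides of \eqref{PE} as functions of $r$ and letting $r\to a\in\mathbb{N}$: the Laurent expansions of $\Gamma(1-r)$ and of $\zeta(r-k)$ at $k=r-1$ are combined to show that the two singular terms on the right of \eqref{PE} have a finite confluent limit, producing exactly the combination $\zeta(t+s+r-1)\left(H_{r-1}-\psi(t+r-1)+\log(x)\right)-\zeta'(t+s+r-1)$, while the integral side is shown to be analytic in $r$ near $a$ (via Temme's theorem with the bounds \eqref{BOr}--\eqref{BO1}) so that the limit may be passed inside. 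You instead redo the derivation of Proposition \ref{first} from scratch at integer $r$, starting from the logarithmic expansion of $\textup{Li}_r(e^{-y})$, evaluating the new integral $\int_0^\infty y^{t+r-2}e^{-Xy}\log(y)\,dy=\Gamma(t+r-1)X^{1-t-r}\left(\psi(t+r-1)-\log X\right)$ by differentiation under the integral sign, and then tracking $\log(mx)=\log x+\log m$ through the sum over $m$ to produce the $-\zeta'(t+s+r-1)$ term; I checked the bookkeeping and it reproduces \eqref{CE} exactly, including signs. The trade-off is that the paper's limiting argument reuses Proposition \ref{first} wholesale but must justify interchanging a limit with an improper integral and must compute a somewhat delicate cancellation of poles, whereas your direct computation is more elementary and self-contained (no analytic continuation in $r$ is needed) at the cost of re-running the convergence and interchange estimates, which, as you note, go through verbatim since the truncated bracket is still $O(y^{M+1})$ near the origin.
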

	
	\begin{proof}
		Let $a\in \mathbb{N}$. Using the Laurent series expansions of $\Gamma(1-r)$ and $\zeta(r-a+1)$ around $r=a$ and simplifying, we get,
		\begin{align}
			&\lim_{r \to a} \Bigg( \textup{ Li}_r (e^{-y}) - \Gamma(1-r) y^{r-1} - \sum_{\substack{k=0 }}^{M} (-1)^k\zeta(r-k) \frac{y^k}{k!}\Bigg) \notag \\ 
			&= \textup{ Li}_a (e^{-y}) + (-1)^{a} (H_{a-1} -\log(y)) \frac{y^{a-1}}{(a-1)!} - \sum_{\substack{k=0 \\ k\ne a-1}}^{M} (-1)^k\zeta(a-k) \frac{y^k}{k!} \label{rLim}
		\end{align}
		Take limit as $r \to a$ on both sides of \eqref{PE}. Invoke \cite[Theorem 2.2]{temme} with the bounds given in \eqref{BO0} and \eqref{BO1} to see the integral on the left-hand side is analytic in $r$, in the strip $a-1< \Re(r)<a+1$ with a removable singularity at $a$. Hence, interchanging the order of limit and integration, we get,
		\begin{align*}
			&\frac{1}{\Gamma(t)} \int_{0}^{\infty} y^{t-1} \textup{Li}_s (e^{-xy}) \bigg( \textup{Li}_a (e^{-y}) + (-1)^{a} (H_{a-1} -\log(y)) \frac{y^{a-1}}{(a-1)!} - \sum_{\substack{k=0 \\ k\ne a-1}}^{M} (-1)^k\zeta(a-k) \frac{y^k}{k!} \bigg) \, dy \notag \\
			&= \Theta(a,s,t,x) - \sum_{\substack{k=0  \\ k \ne a-1}}^{M} \frac{ (-1)^k}{k!} \frac{\zeta(a-k)\Gamma(t+k)}{ x^{t+k}\Gamma(t)} \zeta(s+t+k)   \notag\\
			& - \lim_{r \to a} \bigg( \frac{\Gamma(1-r)\Gamma(t+r-1)}{ x^{t+r-1}\Gamma(t)}\zeta(s+t+r-1)- \frac{(-1)^{a}\zeta(r-a+1)\Gamma(t+a-1)}{ x^{t+a-1}(a-1)!\Gamma(t)} \zeta(s+t+a-1) \bigg)  .
		\end{align*}
		To evaluate the limit in the above equation, use the Laurent series expansions of the respective functions around $r=a$. Finally, replace $a$ by $r$ in the resultant to get \eqref{CE}.
	\end{proof}
	 
	We are now equipped to prove the Kronecker Limit type formula in \emph{third variable}, around $t=1-r-\ell$. 
	
	\begin{theorem}\label{1stpolar}
		Let $r \in \mathbb{Z}\setminus \mathbb{N}$ and $\ell \in \mathbb{N} \cup \{0\}$ be arbitrary such that $\ell\geq 1-r$. Then, the following hold.\\
		\textbf{\textup{Case I:}} For a fixed $s \in \mathbb{Z}$ such that $s>r+\ell$ and $s\ne \ell+1$, or $s \in \mathbb{C} \setminus \mathbb{Z}$, we have, around $t=1-r-\ell$,
		\begin{align}
			\Theta(r,s,t,x) &= (-1)^{r-1}\frac{ (r+\ell-1)!}{ \ell!}x^\ell \Gamma(1-r) \zeta(s-\ell) \notag \\
			& \quad + \sum_{\substack{k=0}}^{r+\ell-1} \frac{(r+\ell-1)!\zeta(r-k)}{ x^{1-r-\ell+k}k!(r+\ell-k-1)!} \zeta(-r+s-\ell+k+1) + O(|t-(1-r-\ell)|) \label{T1C1}
		\end{align}
		\textbf{\textup{Case II:}} For a fixed $s \in \mathbb{Z}$ such that $s>r+\ell$ and $s=\ell+1$, we have,  around $t=1-r-\ell$,
		\begin{align}
			\Theta(r,s,t,x)
			&= (-1)^{r-1}(-r)! x^\ell  \frac{ (r+\ell-1)!}{\ell! (t-(1-r-\ell))} +(-1)^{r-1} (-r)! x^\ell \frac{ (r+\ell-1)!}{\ell! }\left(\gamma-\log(x)+H_{\ell}-H_{r+\ell-1}\right) \notag \\
			& \quad + \sum_{\substack{k=0}}^{r+\ell-1} \binom{r+\ell-1}{k} \frac{\zeta(r-k)}{ x^{k-r-\ell+1}} \zeta(k-r+2) + O(|t-(1-r-\ell)|). \label{T1C2}
		\end{align}
		\textbf{\textup{Case III:}} For a fixed $s \in \mathbb{Z} \setminus \mathbb{N}$ such that $s \leq r+\ell$, we have,  around $t=1-r-\ell$,
		\begin{align}
			\Theta(r,s,t,x)
			&= (-1)^{r-1}x^{\ell}\frac{(-r)!(r+\ell-1)!}{ \ell!}\zeta(s-\ell)   + \sum_{\substack{k=0}}^{r+\ell-1} \frac{\zeta(r-k)(r+\ell-1)!}{ x^{k-r-\ell+1}k!(r+\ell-k-1)!} \zeta(s+1-r-\ell+k) \notag\\
			&\quad +(-1)^{s-1}x^{s-1}\frac{(r+\ell-1)!(-s)!}{ (r+\ell-s)!} \zeta(s-\ell)+ O(|t-(1-r-\ell)|). \label{T1C3}
		\end{align}
		\textbf{\textup{Case IV:}} For a fixed $s \in \mathbb{N}$ such that $s \leq r+\ell$, we have,  around $t=1-r-\ell$,
		\begin{align}
			&\Theta(r,s,t,x) =   x^{s-1} \frac{(r+\ell-1)!}{ (r+\ell-s)! (s-1)!}   \frac{\zeta(s-\ell)}{(t-(1-r-\ell))} \notag \\
			&\quad + x^{s-1} \frac{(r+\ell-1)!}{ (r+\ell-s)! (s-1)!} (\gamma -H_{r+\ell-1}+H_{s-1}-\log(x)) \zeta(s-\ell) -(-1)^{r}x^{\ell}\frac{(-r)!(r+\ell-1)!}{ \ell!}\zeta(s-\ell)  \notag \\
			&\quad + \sum_{\substack{k=0 \\ k \ne r+\ell-s}}^{r+\ell-1} \frac{(r+\ell-1)!\zeta(r-k)}{(r+\ell-k-1)! x^{k-r-\ell+1}k!} \zeta(s-r-\ell+k+1) + O(|t-(1-r-\ell)|) \label{T1C4}
		\end{align}
	\end{theorem}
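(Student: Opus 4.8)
The plan is to localise Proposition~\ref{first} at $t=1-r-\ell$. Since $r\in\mathbb{Z}\setminus\mathbb{N}$ and $\ell\geq 1-r$, the point $1-r-\ell$ is a non-positive integer, so $\Gamma(t)$ has a simple pole there and $1/\Gamma(t)$ a simple zero. First I would fix an integer $M$ large enough that the integrand in \eqref{PE}, estimated as in \eqref{BO0} together with the extra factor $y^{\Re(s)-1}$ contributed by $\textup{Li}_s(e^{-xy})$ when $\Re(s)<1$, is $O(y^{-1+\delta})$ near $y=0$ for some $\delta>0$ as $t$ ranges over a neighbourhood of $1-r-\ell$; any $M\geq r+\ell-1$ works when $\Re(s)\geq 1$, and any integer $M>r+\ell-1-\Re(s)$ works otherwise. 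Writing $I(t)$ for the left-hand side of \eqref{PE} and $T_1(t)=\Gamma(1-r)\Gamma(t+r-1)\zeta(t+s+r-1)x^{-(t+r-1)}/\Gamma(t)$, $T_2(t)$ for the two explicit terms subtracted on its right (so $T_2$ is the finite $k$-sum), equation \eqref{PE} reads
\begin{equation*}
	\Theta(r,s,t,x)=I(t)+T_1(t)+T_2(t).
\end{equation*}
By the analyticity argument used in the proofs of Proposition~\ref{first} and Corollary~\ref{second}, the integral $\int_{0}^{\infty}y^{t-1}\textup{Li}_s(e^{-xy})(\cdots)\,dy$ is analytic at $t=1-r-\ell$, so $I(t)=O(|t-(1-r-\ell)|)$ there because of the simple zero of $1/\Gamma(t)$. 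This is exactly why Proposition~\ref{first} is the right input: it has already traded the singular polynomial-in-$y$ pieces for the explicit $\Gamma$--$\zeta$ expressions $T_1,T_2$.

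It then remains to expand $T_1$ and each summand of $T_2$ about $t=1-r-\ell$, using $\Gamma(z)=\tfrac{(-1)^n}{n!}\bigl(\tfrac{1}{z+n}+H_n-\gamma+O(z+n)\bigr)$ near $z=-n$, $\zeta(z)=\tfrac{1}{z-1}+\gamma+O(z-1)$ near $z=1$ with $\zeta$ analytic elsewhere, and $x^{-z}=x^{-z_0}\bigl(1-(z-z_0)\log x+\cdots\bigr)$ near $z=z_0$. Two observations organise the bookkeeping. First, $\Gamma(t+r-1)$ evaluated at $t=1-r-\ell$ is $\Gamma(-\ell)$, which always has a simple pole that is cancelled by the simple zero of $1/\Gamma(t)$; hence $T_1$ is singular at $t=1-r-\ell$ exactly when $\zeta(s-\ell)$ is, i.e. exactly when $s=\ell+1$. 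Second, in the $k$-th summand of $T_2$ the quotient $\Gamma(t+k)/\Gamma(t)$ has a simple pole when $0\le k\le r+\ell-1$ and a simple zero when $k\ge r+\ell$, while $\zeta(t+k+s)$ at $t=1-r-\ell$ equals $\zeta(k-r-\ell+s+1)$, which is singular exactly when $k=r+\ell-s$. Consequently the $k$-th summand of $T_2$ contributes: a genuine pole iff $k=r+\ell-s$ with $1\le s\le r+\ell$; a finite (generically nonzero) value if $0\le k\le r+\ell-1$ and $k\ne r+\ell-s$; a finite value if $k=r+\ell-s\ge r+\ell$, i.e. if $s\le 0$; and $O(|t-(1-r-\ell)|)$ for all remaining $k$.

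The four cases of the theorem are precisely the four configurations of this dichotomy. In Case~I nothing is singular ($s\ne\ell+1$, and $r+\ell-s$ is not a nonnegative integer, which holds for $s\in\mathbb{Z}$ with $s>r+\ell$ or for $s\notin\mathbb{Z}$), so $\Theta$ equals the sum of the finite values of $T_1$ and of the summands $k=0,\dots,r+\ell-1$ of $T_2$; Case~II ($s=\ell+1$) in addition picks up the pole of $T_1$; Case~III ($s\le 0$) is regular but gains the extra finite contribution of the $k=r+\ell-s$ summand of $T_2$; Case~IV ($1\le s\le r+\ell$, hence $s\ne\ell+1$ automatically since $r\le 0$) picks up the pole of the $k=r+\ell-s$ summand of $T_2$. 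In each case I would substitute the residue and constant-term data and simplify, using $\Gamma(1-r)=(-r)!$, $\operatorname{Res}_{z=-\ell}\Gamma(z)=(-1)^{\ell}/\ell!$, the fact that $1/\Gamma(t)$ carries the factor $(-1)^{r+\ell-1}(r+\ell-1)!\,\bigl(t-(1-r-\ell)\bigr)$, and the collapse of sign powers such as $(-1)^{r+\ell-s}(-1)^{s-1}(-1)^{r+\ell-1}=\pm1$; the binomial coefficients $\binom{r+\ell-1}{k}$ and $\binom{r+\ell-1}{s-1}$ in the stated formulas arise from pairing $1/k!$ (respectively $1/(s-1)!$) with the residue $(-1)^{r+\ell-1-k}/(r+\ell-1-k)!$ of $\Gamma(t+k)$ (respectively of $\Gamma(1-s)$). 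The main obstacle is not conceptual but this layered bookkeeping: correctly matching which $\Gamma$ and $\zeta$ factors are singular in each case, expanding products of the shape $\bigl(\tfrac1\epsilon+c_1\bigr)\bigl(\tfrac1\epsilon+c_2\bigr)\,\epsilon\,(1-c_3\epsilon)$ (with $\epsilon=t-(1-r-\ell)$) to order $\epsilon^0$ to produce the constants $\gamma-\log x+H_{\bullet}-H_{r+\ell-1}$, keeping all signs straight, and checking that the single fixed choice of $M$ both ensures analyticity of the integral and, in Case~III, supplies exactly the one extra $T_2$-summand.
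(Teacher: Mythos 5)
Your proposal is correct and follows essentially the same route as the paper: apply Proposition \ref{first} with $M$ large enough that the integral is analytic at $t=1-r-\ell$ and hence killed by the simple zero of $1/\Gamma(t)$, then Laurent-expand the explicit $\Gamma$--$\zeta$ terms and sort the four cases by which factors ($\zeta(s-\ell)$ in $T_1$, the $k=r+\ell-s$ summand of $T_2$) are singular. One phrasing slip: for $0\le k\le r+\ell-1$ the quotient $\Gamma(t+k)/\Gamma(t)$ does not have a simple pole but a finite nonzero limit (the pole of $\Gamma(t+k)$ is cancelled by the zero of $1/\Gamma(t)$), which is in fact what your subsequent contribution analysis correctly uses.
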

	\begin{proof}
		Let $r \in \mathbb{Z}\setminus \mathbb{N}$ and $\ell \in \mathbb{N} \cup \{0\}$ such that $\ell\geq 1-r$ be fixed and $s \in \mathbb{C}$.
		Then, for $t>-r-\ell$, \eqref{BOr} and \eqref{BOs} gives, 
		\begin{align}
			y^{t-1} \textup{Li}_s (e^{-xy}) \bigg( \hspace{-0.2 cm} \textup{ Li}_r (e^{-y}) - \Gamma(1-r) y^{r-1} - \sum_{\substack{k=0 }}^{r+\ell+ \lfloor|\textup{Re}(s)|\rfloor +1} (-1)^k\zeta(r-k) \frac{y^k}{k!}\bigg) = O\left(y^{\epsilon-1}\right), \label{BOt}
		\end{align} 
		for some $\e >0$. For $M=r+\ell+ \lfloor|\textup{Re}(s)|\rfloor +1$, from \eqref{BO1} and \eqref{BOt}, the integral on the left-hand side of \eqref{PE} is now defined and analytic in the extended region $\textup{Re}(t)>-r-\ell$  by using \cite[Theorem 2.2]{temme}. Hence, due to the pole of $\Gamma(t)$ at $t=1-r-\ell$, we see
		\begin{align*}
			\lim_{t \to 1-r-\ell} \Bigg( \frac{1}{\Gamma(t)} \int_{0}^{\infty} y^{t-1} \textup{Li}_s (e^{-xy}) \bigg( \hspace{-0.2 cm} \textup{ Li}_r (e^{-y}) - \Gamma(1-r) y^{r-1} - \sum_{\substack{k=0 }}^{r+\ell+ \lfloor|\textup{Re}(s)|\rfloor +1} (-1)^k\zeta(r-k) \frac{y^k}{k!}\bigg) \, dy \Bigg) =0.
		\end{align*} 
		Hence, from \eqref{PE}
		\begin{align}
			\Theta(r,s,t,x) &= \frac{\Gamma(1-r)\Gamma(t+r-1)}{ x^{t+r-1}\Gamma(t)}\zeta(r+s+t-1) \notag \\
			& \quad + \sum_{\substack{k=0}}^{r+\ell+ \lfloor|\textup{Re}(s)|\rfloor +1} \frac{(-1)^k\zeta(r-k)\Gamma(t+k)}{ x^{t+k}k!\Gamma(t)} \zeta(s+t+k) + O(|t-(1-r-\ell)|). \label{M1R}
		\end{align}
		To get the required result for $\Theta(r,s,t,x)$, we analyze the behaviour of the right-hand side of \eqref{M1R} around $t=1-r-\ell$. Since $\Gamma(t)$ has a pole at $t=1-r-\ell$, only the terms with a pole in the numerator survive as $t \to 1-r-\ell$. We handle the remaining calculations in cases, as mentioned in statement of this Theorem. \\ 
		\textbf{Case I:} 
			Note that, $\Gamma(t+k)$ has a simple pole only for $k \leq r+\ell-1$. For any $k$, $\zeta(s+t+k)$ has no pole since $s>r+\ell$ or $s \notin \mathbb{Z}$.
		$\Gamma(t+r-1)$ has a simple pole.
		$\zeta(r+s+t-1)$ has no pole since $s \ne \ell+1$.
		%
		Use the functional equation $\Gamma(t+1)=t \Gamma(t)$ repeatedly to get,
		\begin{align*}
			\frac{\Gamma(t+k)}{\Gamma(t)}=t (t+1)\cdots(t+k-1). 
		\end{align*}
		Taking limit as $t \to 1-r-\ell$ on both sides of the above equation, for $0\leq k \leq r+\ell-1$ we get,
		\begin{align}
			\lim_{t\to 1-r-\ell} \left( \frac{\Gamma(t+k)}{\Gamma(t)} \right) = (1-r-\ell)(2-r-\ell)\cdots (k-r-\ell) = (-1)^{k} \frac{(r+\ell-1)!}{(r+\ell-k-1)!}. \label{Glim1}
		\end{align}
		Also, since $r \in \mathbb{Z} \setminus \mathbb{N}$, 
		\begin{align*}
			\frac{\Gamma(t+r-1)}{\Gamma(t)} = \frac{\Gamma(t+r-1)}{(t-1)(t-2) \cdots (t+r-1) \Gamma(t+r-1)}  = \frac{1}{(t-1)(t-2) \cdots (t+r-1)}
		\end{align*}
		Hence, we can see,
		\begin{align}
			\lim_{t \to 1-r-\ell} \left( \frac{\Gamma(t+r-1)}{\Gamma(t)}  \right) = (-1)^{r-1} \frac{(r+\ell-1)!}{\ell!}. \label{Glim2}
		\end{align}
		Use \eqref{Glim1} and \eqref{Glim2} in \eqref{M1R} to get the required result \eqref{T1C1}.\\
		\textbf{Case II:} Note that, $\Gamma(t+k)$ has a simple pole only for $k \leq r+\ell-1$. For any $k$, $\zeta(s+t+k)$ has no pole since $s>r+\ell$. $\Gamma(t+r-1)$ has a simple pole.
		$\zeta(r+s+t-1)$ has a simple pole since $s = \ell+1$. Also, observe that, for any $k, r \in \mathbb{N}$ such that $k \leq r-1$, around $t=1-r$, from Laurent series expansions of the $\Gamma(z)$, we have 
		\begin{align}
			\frac{\Gamma(t+r-1)}{\Gamma(t)}= (-1)^{r-1}\frac{ (r+\ell-1)!}{\ell!}-(-1)^{r}\frac{ (r+\ell-1)!}{\ell!} (H_{\ell}-H_{r+\ell-1}) (t-(1-r-\ell)) + O(|t-(1-r-\ell)|^2). \label{Glim3}
		\end{align}
		Also, the Laurent series of $x^{r+t-1}$ with additional term around $r+t=1-\ell$ is as follows,
		\begin{align}
			\frac{1}{x^{t+r-1}} =x^{\ell} - x^{\ell} \log(x) (t-(1-r-\ell)) + O(|t-(1-r-\ell)|^2). \label{xlim}
		\end{align}
		The well-known Laurent series expansion of $\zeta(z)$,
		\begin{align}
			\zeta(t+r+\ell)= \frac{1}{(t-(1-r-\ell))} + \gamma - \gamma_1 (t-(1-r-\ell)) +O(|t-(1-r-\ell)|^2). \label{zlim}
		\end{align}
		Combining the equations \eqref{Glim1}, \eqref{Glim3}, \eqref{xlim} and \eqref{zlim} in \eqref{M1R}, we get the required result \eqref{T1C2}.\\ 
		\textbf{Case III:} Note that, $\Gamma(t+k)$ has a simple pole only for $k \leq r+\ell-1$. Observe that $\zeta(s+t+k)$ has a simple pole only for $k=r+\ell-s$, since $s\leq r+\ell$. Also note that $r+\ell-s> r+\ell-1$ since $s \in \mathbb{Z} \setminus  \mathbb{N}$. $\Gamma(t+r-1)$ has a simple pole. $\zeta(r+s+t-1)$ has no pole since $s \leq r+ \ell$. Hence, from \eqref{M1R}, we get the required result \eqref{T1C3}.
		\\ 
		\textbf{Case IV:} Note that, $\Gamma(t+k)$ has a simple pole only for $k \leq r+\ell-1$. Observe that $\zeta(s+t+k)$ has a simple pole only for $k=r+\ell-s$, since $s\leq r+\ell$. Also note that $r+\ell-s\leq r+\ell-1$ since $s \in \mathbb{N}$. $\Gamma(t+r-1)$ has a simple pole. $\zeta(r+s+t-1)$ has no pole since $s \leq r+ \ell$.
	Hence, from \eqref{M1R}, we get the required result \eqref{T1C4}.
	\end{proof}

	\begin{theorem}\label{2ndpolar}
		Let $r \in \mathbb{N}$ and $\ell \in \mathbb{N} \cup \{0\}$ be arbitrary such that $\ell \ge 1-r$. Then, the following hold.\\
		\textbf{\textup{Case I:}} For a fixed $s \in \mathbb{Z}$ such that $s>r+\ell$ , or $s \in \mathbb{C} \setminus \mathbb{Z}$, we have,
			\begin{align}
			\Theta(r,s,t,x) 
			&= \binom{r+\ell-1}{\ell} x^{\ell} \frac{\zeta(s-\ell)}{(t-(1-r-\ell))} + \binom{r+\ell-1}{\ell} x^{\ell} \zeta(s-\ell) \left(\gamma+H_{r-1}-H_{r+\ell-1}\right) \notag  \\
			&\quad +\sum_{\substack{k=0 \\ k\ne r-1}}^{r+\ell-1} \binom{r+\ell-1}{k} x^{r+\ell-1-k} \zeta(r-k) \zeta(1+k-\ell-r+s) + O(|t-(1-r-\ell)|). \label{T2C1}
		\end{align}
		\\
		\textbf{\textup{Case II:}}  For a fixed $s \in \mathbb{Z} \setminus \mathbb{N}$ such that $s \leq r+\ell$, we have,
		\begin{align}
			\Theta(r,s,t,x) &=	 \binom{r+\ell-1}{\ell} x^{\ell} \frac{\zeta(s-\ell)}{(t-(1-r-\ell))} + \binom{r+\ell-1}{\ell} x^{\ell} \zeta(s-\ell) \left(\gamma+H_{r-1}-H_{r+\ell-1}\right) \notag  \\
			& \quad + \sum_{\substack{k=0 \\ k\ne r-1}}^{r+\ell-1} \binom{r+\ell-1}{k} x^{r+\ell-1-k} \zeta(r-k) \zeta(1+k-\ell-r+s) \notag  \\
			& \quad  + (-1)^{s-1} \frac{(r+\ell-1)!(-s)!}{(r+\ell-s)!} x^{s-1} \zeta(s-\ell) + O(|t-(1-r-\ell)|). \label{T2C2}
		\end{align}
		\\
		\textbf{\textup{Case III:}}  For a fixed $s \in \mathbb{N}$ such that $s\leq r+\ell$ and $s \ne \ell+1$, we have,
			\begin{align}
			\Theta(r,s,t,x) &= \binom{r+\ell-1}{\ell} x^{\ell} \frac{\zeta(s-\ell)}{(t-(1-r-\ell))} + \binom{r+\ell-1}{\ell} x^{\ell} \zeta(s-\ell) \left(\gamma+H_{r-1}-H_{r+\ell-1}\right) \notag  \\
			&\quad + \binom{r+\ell-1}{s-1}x^{s-1} \frac{\zeta(s-\ell)}{(t-(1-r-\ell))} + \binom{r+\ell-1}{s-1}x^{s-1} \zeta(s-\ell) \left( \gamma + H_{s-1} -H_{r+\ell-1} - \log(x) \right) \notag \\
			&\quad +\sum_{\substack{k=0 \\ k\ne r-1 \\ k\ne r+\ell-s}}^{r+\ell-1} \binom{r+\ell-1}{k} x^{r+\ell-1-k} \zeta(r-k) \zeta(1+k-\ell-r+s)  + O(|t-(1-r-\ell)|). \label{T2C3}
			\end{align}
		\\
		\textbf{\textup{Case IV:}}  For a fixed $s \in \mathbb{N}$ such that ($s\leq r+\ell$ and) $s = \ell+1$, we have,
		\begin{align}
			\Theta(r,s,t,x) & =  \binom{r+\ell-1}{\ell}  \frac{ 2 x^{\ell}}{(t-(1-r-\ell))^2} + \binom{r+\ell-1}{\ell}  \frac{  x^{\ell} \left(2 \gamma+ H_{\ell}+H_{r-1}-2 H_{r+\ell-1}-\log(x)\right)}{(t-(1-r-\ell))} \notag \\
			&\quad + \frac{1}{3}\binom{r+\ell-1}{\ell} x^{\ell} \bigg( \hspace{-0.2 cm} -\pi^2+ 3 (\gamma+ H_{\ell}-H_{r+\ell-1}-\log(x)) (\gamma+H_{r-1}-H_{r+\ell-1})+3 \psi'(r+\ell)\bigg) \notag \\
			&\quad 	+ \sum_{\substack{k=0 \\ k\ne r-1}}^{r+\ell-1} \binom{r+\ell-1}{k} x^{r+\ell-1-k} \zeta(r-k) \zeta(k-r+2) + O(|t-(1-r-\ell)|). \label{T2C4}
		\end{align}		
	\end{theorem}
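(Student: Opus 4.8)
\emph{Proof strategy.} The plan is to rerun the proof of Theorem \ref{1stpolar} almost verbatim, the only difference being that, since $r\in\mathbb{N}$, one starts from Corollary \ref{second} (equation \eqref{CE}) rather than Proposition \ref{first} (equation \eqref{PE}). Fix $r\in\mathbb{N}$, $\ell\in\mathbb{N}\cup\{0\}$ (the hypothesis $\ell\ge 1-r$ being automatic here) and $s\in\mathbb{C}$, and set $M:=r+\ell+\lfloor|\Re(s)|\rfloor+1$, which satisfies the requirement $M\ge r-1$ of Corollary \ref{second}. Just as in the proof of Corollary \ref{second}, the $\log y$ in the modified subtrahend cancels against the logarithmic term in the small-$y$ expansion of $\mathrm{Li}_r(e^{-y})$, so the bounds \eqref{BO0}, \eqref{BO1} and \eqref{BOt} apply to the integrand of \eqref{CE} exactly as they do to that of \eqref{PE}; hence, by \cite[Theorem 2.2]{temme}, the integral on the left of \eqref{CE} is analytic in the extended strip $\Re(t)>-r-\ell$. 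Since $\Gamma(t)$ has a simple pole at $t_0:=1-r-\ell$, dividing this now-regular integral by $\Gamma(t)$ gives an $O(|t-t_0|)$ quantity, and \eqref{CE} therefore yields, for $t$ near $t_0$,
\begin{align*}
	\Theta(r,s,t,x) &= \sum_{\substack{k=0\\k\ne r-1}}^{M}\frac{(-1)^k}{k!}\,\frac{\zeta(r-k)\,\Gamma(t+k)}{x^{t+k}\,\Gamma(t)}\,\zeta(t+k+s)\\
	&\quad - \frac{(-1)^{r}\Gamma(t+r-1)}{x^{t+r-1}(r-1)!\,\Gamma(t)}\Big(\zeta(t+s+r-1)\big(H_{r-1}-\psi(t+r-1)+\log x\big)-\zeta'(t+s+r-1)\Big) + O(|t-t_0|).
\end{align*}
This is the analogue of \eqref{M1R}; I will call the last displayed term the \emph{logarithmic block}.

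The next step is to read off the singular behaviour of the right-hand side at $t_0$, just as in the proof of Theorem \ref{1stpolar}: as $1/\Gamma(t)$ has a simple zero at $t_0$, a term contributes only where its numerator supplies a pole. By \eqref{Glim1}, $\frac{(-1)^k}{k!}\frac{\Gamma(t+k)}{\Gamma(t)}\to\binom{r+\ell-1}{k}$ for $0\le k\le r+\ell-1$ and $\to 0$ for $k\ge r+\ell$, while $\zeta(t+k+s)$ acquires a simple pole precisely at $k=r+\ell-s$ (which matters only when $s\in\mathbb{Z}$). In the logarithmic block, $\Gamma(t+r-1)/\Gamma(t)=t(t+1)\cdots(t+r-2)$ is a polynomial (because $r\in\mathbb{N}$), hence finite and nonzero at $t_0$ with the expansion \eqref{Glim3}, $\psi(t+r-1)=\psi(-\ell+(t-t_0))$ always has a simple pole, and $\zeta(t+s+r-1)=\zeta(s-\ell+(t-t_0))$ together with $\zeta'(t+s+r-1)$ have a pole iff $s=\ell+1$. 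Classifying by where and in what order these poles occur produces exactly the four cases of Figure \ref{ExPic7}. In Case I ($s>r+\ell$ or $s\notin\mathbb{Z}$) the only pole is the one from $-\psi(t+r-1)$; a second-order Laurent expansion of $x^{-(t+r-1)}$ via \eqref{xlim}, of $\Gamma(t+r-1)/\Gamma(t)$ via \eqref{Glim3}, and of $\psi(-\ell+(t-t_0))$ — during which the standalone $\zeta'(s-\ell)$ cancels against the expansion of $\zeta(s-\ell+(t-t_0))(H_{r-1}-\psi+\log x)$ — yields $\binom{r+\ell-1}{\ell}x^{\ell}\zeta(s-\ell)\big(\tfrac1{t-t_0}+\gamma+H_{r-1}-H_{r+\ell-1}\big)$, while the surviving sum over $k\in\{0,\dots,r+\ell-1\}\setminus\{r-1\}$ gives $\sum\binom{r+\ell-1}{k}x^{r+\ell-1-k}\zeta(r-k)\zeta(1+k-\ell-r+s)$, which is \eqref{T2C1}. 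Case II ($s\in\mathbb{Z}\setminus\mathbb{N}$, $s\le r+\ell$) is the same, except that now $k=r+\ell-s\in\{r+\ell,\dots,M\}$: there $\Gamma(t+k)$ is finite while the simple zero of $1/\Gamma(t)$ exactly absorbs the simple pole of $\zeta(t+k+s)=\zeta(1+(t-t_0))$, leaving (using $\zeta(r-k)|_{k=r+\ell-s}=\zeta(s-\ell)$) the extra finite term $(-1)^{s-1}\tfrac{(r+\ell-1)!(-s)!}{(r+\ell-s)!}x^{s-1}\zeta(s-\ell)$ of \eqref{T2C2}.

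The delicate point, on which Cases III and IV turn, is that the exclusion $k\ne r-1$ in \eqref{CE} changes the singular structure: when $s\in\mathbb{N}$ and $s\le r+\ell$, the index $k=r+\ell-s$ lies in $\{0,\dots,r+\ell-1\}$ and equals $r-1$ exactly when $s=\ell+1$. For $s\ne\ell+1$ (Case III) the term $k=r+\ell-s$ is a genuine summand, and expanding it to second order — with $\zeta(t+k+s)=\zeta(1+(t-t_0))$, with the logarithmic derivative of $t(t+1)\cdots(t+k-1)$ at $t_0$ equal to $H_{s-1}-H_{r+\ell-1}$, and with $x^{-(t+k)}=x^{s-1}(1-(t-t_0)\log x+\cdots)$ — produces the \emph{second} singular block $\binom{r+\ell-1}{s-1}x^{s-1}\zeta(s-\ell)\big(\tfrac1{t-t_0}+\gamma+H_{s-1}-H_{r+\ell-1}-\log x\big)$, which, together with the logarithmic block (handled as in Case I) and the remaining sum over $k\in\{0,\dots,r+\ell-1\}\setminus\{r-1,\,r+\ell-s\}$, gives \eqref{T2C3}. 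For $s=\ell+1$ (Case IV) the index $k=r+\ell-s=r-1$ is excluded, so the sum contributes no pole there; instead $\zeta(t+s+r-1)=\zeta(t+r+\ell)=\zeta(1+(t-t_0))$ acquires a simple pole, so inside the logarithmic block both $\zeta(1+(t-t_0))\big(H_{r-1}-\psi(-\ell+(t-t_0))+\log x\big)$ and $\zeta'(1+(t-t_0))$ are of order $(t-t_0)^{-2}$, giving a genuine double pole (the first-Stieltjes-constant terms cancelling, thanks to the $-\zeta'$ in \eqref{CE}). Expanding $\zeta(1+\cdot)$, $\zeta'(1+\cdot)$, $\psi(-\ell+\cdot)$, $\Gamma(t+r-1)/\Gamma(t)$ and $x^{-(t+r-1)}$ each to second order and combining then reproduces the double-pole, simple-pole and constant terms of \eqref{T2C4}; the trigamma/harmonic-square contributions — from the $O(t-t_0)$ coefficient $\zeta(2)+\sum_{j=1}^{\ell}j^{-2}$ of $\psi(-\ell+\cdot)$ and the second Taylor coefficient of $\Gamma(t+r-1)/\Gamma(t)$ — collapse, via $\psi'(m+1)=\zeta(2)-\sum_{j=1}^m j^{-2}$, into the displayed $-\pi^2+3(\cdots)(\cdots)+3\psi'(r+\ell)$.

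\emph{Main obstacle.} I expect Case IV to be the crux: it requires pushing the second Laurent coefficients of five distinct special functions through the product defining the logarithmic block and checking that the resulting constant term really does telescope to $\tfrac13\binom{r+\ell-1}{\ell}x^{\ell}\big(-\pi^2+3(\gamma+H_\ell-H_{r+\ell-1}-\log x)(\gamma+H_{r-1}-H_{r+\ell-1})+3\psi'(r+\ell)\big)$. The conceptually important but computationally easier point elsewhere is to recognize, in Case III, that the ``missing'' $k=r-1$ summand is precisely what keeps the two simple poles from merging, so that the principal part there has the two separate residues shown in \eqref{T2C3} rather than a double pole.
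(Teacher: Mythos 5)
Your proposal is correct and follows essentially the same route as the paper: it starts from Corollary \ref{second}, extends the integral analytically to $\Re(t)>-r-\ell$ so that division by $\Gamma(t)$ makes it $O(|t-(1-r-\ell)|)$ (yielding the paper's \eqref{M1N}), and then performs the same case-by-case Laurent analysis, including the key observation that the excluded index $k=r-1$ is what separates Cases III and IV. Your supporting details (the logarithmic derivative $H_{s-1}-H_{r+\ell-1}$ in Case III, the cancellation of the $\gamma_1$ and $\zeta'(s-\ell)$ terms, the coefficient $\zeta(2)+\sum_{j=1}^{\ell}j^{-2}$ in the expansion of $\psi(-\ell+\cdot)$) all check out.
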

	\begin{proof}
		Let $s \in \mathbb{C}$. Then, for $t>-r-\ell$, \eqref{rLim} and \eqref{BOt} gives,  
		\begin{align}
			\textup{ Li}_r (e^{-y}) - (-1)^{r-1} (H_{r-1} -\log(y)) \frac{y^{r-1}}{(r-1)!} - \sum_{\substack{k=0 \\ k\ne r-1}}^{ r+\ell+ \lfloor|\textup{Re}(s)|\rfloor +1} (-1)^k\zeta(r-k) \frac{y^k}{k!} = O(y^{\epsilon -1}) \label{BOt1}
		\end{align} 
		for some $\e >0$. As in the previous theorem, for $M=r+\ell+ \lfloor|\textup{Re}(s)|\rfloor +1$, from \eqref{BO1} and \eqref{BOt1}, the integral on the left-hand side of \eqref{CE} is now defined and analytic in the extended region $\textup{Re}(t)>-r-\ell$  by using \cite[Theorem 2.2]{temme}. Hence, due to the pole of $\Gamma(t)$ at $t=1-r-\ell$, we see
		\begin{align*}
			\lim_{t \to 1-r-\ell} \Bigg( \frac{1}{\Gamma(t)} \int_{0}^{\infty} y^{t-1} \textup{Li}_s (e^{-xy}) \bigg( \textup{ Li}_r(e^{-y}) -& (-1)^{r-1} (H_{r-1} -\log(y)) \frac{y^{r-1}}{(r-1)!} \\ &- \sum_{\substack{k=0 \\ k\ne r-1}}^{r+\ell+ \lfloor|\textup{Re}(s)|\rfloor +1} (-1)^k\zeta(r-k) \frac{y^k}{k!} \bigg) \, dy \Bigg) =0.
		\end{align*}
		Hence, from \eqref{CE},
		\begin{align}
			\Theta(r,s,t,x) & =  \frac{(-1)^{r-1}\Gamma(t+r-1)}{x^{t+r-1} (r-1)!\Gamma(t)}\Big(\zeta(t+s+r-1) \left( H_{r-1}  - \psi(t+r-1) + \log(x) \right)- \zeta'(t+s+r-1)\Big) \notag \\ 
			&\quad +\sum_{\substack{k=0 \\ k\ne r-1}}^{r+\ell+ \lfloor|\textup{Re}(s)|\rfloor +1} \frac{ (-1)^k}{k!} \frac{\zeta(r-k)\Gamma(t+k)}{ x^{t+k}\Gamma(t)} \zeta(t+k+s) + O(t-(1-r-\ell)). \label{M1N}
		\end{align}
		To get the required Kronecker limit formula for $\Theta(r,s,t,x)$, we analyze the behaviour of the right-hand side of \eqref{M1R} around $t=1-r-\ell$. Since $\Gamma(t)$ has a pole at $t=1-r-\ell$, only the terms with a pole in the numerator survive, as $t \to 1-r-\ell$. We handle the remaining calculations in cases, as mentioned in statement of this Theorem. \\ 
		\textbf{Case I:} Note that, $\Gamma(t+k)$ has a simple pole only for $k \leq r+\ell-1$. Observe that $\zeta(s+t+k)$ and $\zeta(r+s+t-1)$ have no poles since $s > r+\ell$ or $s \notin \mathbb{Z}$.  $\Gamma(t+r-1)$ has a simple pole.  $\psi(t+r-1)$ has a simple pole. From \eqref{M1N}, using \eqref{Glim1} and \eqref{Glim3}, we get the required result \eqref{T2C1}. \\ 
		\textbf{Case II:} Note that, $\Gamma(t+k)$ has a simple pole only for $k \leq r+\ell-1$. Observe that $\zeta(s+t+k)$ has a simple pole for $k =r+\ell-s$, moreover $r+\ell-s > r+\ell-1$. $\zeta(r+s+t-1)$ has no pole since $s \ne \ell+1$.  $\Gamma(t+r-1)$ has a simple pole. $\psi(t+r-1)$ has a simple pole. Also observe that the term corresponding to $k=r+\ell-s$ appears due to the pole of $\zeta(t+k+s)$ and not $\Gamma(t+k)$. From \eqref{M1N}, we get the required result \eqref{T2C2}. \\  
		\textbf{Case III:} Note that, $\Gamma(t+k)$ has a simple pole only for $k \leq r+\ell-1$. Observe that $\zeta(s+t+k)$ has a simple pole for $k =r+\ell-s$, moreover $r+\ell-s \le r+\ell-1$. $\zeta(r+s+t-1)$ has no pole since $s \ne \ell+1$.  $\Gamma(t+r-1)$ has a simple pole. $\psi(t+r-1)$ has a simple pole. As in the previous case, the term corresponding to $k=r+\ell-s$ appears due to the pole of $\zeta(t+k+s)$ and not $\Gamma(t+k)$. From \eqref{M1N}, we get the required result \eqref{T2C3}. \\  
		\textbf{Case IV:} Note that, $\Gamma(t+k)$ has a simple pole only for $k \leq r+\ell-1$. Observe that $\zeta(s+t+k)$ has no pole since $s=\ell+1$ and $ k \ne r-1$. $\zeta(r+s+t-1)$ and $\zeta'(r+s+t-1)$ have a  simple and a double pole respectively since $s = \ell+1$.  $\Gamma(t+r-1)$ has a simple pole. $\psi(t+r-1)$ has a simple pole. Also, use the following power series, around $t=1-r-\ell$,
		\begin{align}
		&\frac{\Gamma(t+r-1)}{\Gamma(t)}= (-1)^{r-1}\frac{ (r+\ell-1)!}{\ell!}-(-1)^{r}\frac{ (r+\ell-1)!}{\ell!} (\psi(\ell+1)-\psi(r+\ell)) (t-(1-r-\ell))  \notag \\
		&-(-1)^{r}\frac{ (r+\ell-1)!}{2\ell!} \left( (\psi(\ell+1)-\psi(r+\ell))^2 - \psi'(\ell+1)+\psi'(r+\ell)  \right)(t-(1-r-\ell))^2 + O(|(t-(1-r-\ell))|^3), \label{Glim4}
		\end{align}
		\begin{align}\label{xlim5}
		\frac{1}{x^{t+r-1}} = x^{\ell}-x^{\ell} \log(x) (t-(1-r-\ell)) + \frac{1}{2} x^{\ell} \log^2(x) (t-(1-r-\ell))^2 +O(|(t-(1-r-\ell))|^3).
		\end{align}
		Use \eqref{Glim4} and \eqref{xlim5} in \eqref{M1N} to get the required result \eqref{T2C4}. 
	\end{proof}
	Next, we are interested to get Kronecker limit type formula for $\Theta(r,s,t,x)$ in the \emph{third variable} $t$, around $t= 2-r-s$. We can obtain such a result for fixed $r,s \in \mathbb{C}$ such that $r+s \in \mathbb{N}$ and $r+s \ge 2$ in the following way: 
	\begin{itemize}
		\item $r, s \in \mathbb{N}$: It reduces to case IV of Theorem \ref{2ndpolar} upon taking $s=\ell+1$.
		\item $s \in \mathbb{N}$ and $r \in \mathbb{Z} \setminus \mathbb{N}$: It reduces to case II of Theorem \ref{1stpolar} upon taking $s=\ell+1$.
		\item $r \in \mathbb{N}$ and $s \in \mathbb{Z} \setminus \mathbb{N}$: It reduces to the previous case, $s \in \mathbb{N}$ and $r \in \mathbb{Z} \setminus \mathbb{N}$, after using \eqref{Tinv}.
	\end{itemize}
	Hence, for a fixed $r,s \in  \mathbb{C} \setminus \mathbb{Z}$, we state the formula as the following theorem.  
	
	\begin{theorem}\label{3rdpolar}
		Let  $r,s \in  \mathbb{C} \setminus \mathbb{Z}$ such that $r+s \in \mathbb{N}$ and $r+s \ge 2$. Around $t=2-r-s$, we have
		\begin{align}
			\Theta(r,s,t,x) & = (-1)^{r+s} x^{s-1} (r+s-2)! \Gamma(1-r) \Gamma(1-s) \notag \\
			&\quad + \sum_{k=0}^{r+s-2} \binom{r+s-2}{k} x^{r+s-2-k}  \zeta(r-k) \zeta(k-r+2) + O\left( |t-(2-r-s)| \right). \label{T3}
		\end{align}
	\end{theorem}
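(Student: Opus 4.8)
The plan is to specialize Proposition \ref{first} to the present setting and to read off the Laurent behavior of the right-hand side of \eqref{PE} at $t=2-r-s$. Since $r\in\mathbb{C}\setminus\mathbb{Z}$ we in particular have $r\notin\mathbb{N}$, so \eqref{PE} is available. The first step is to fix $M$ sufficiently large (depending on $r$ and $s$), in the same spirit as the choices made in the proofs of Theorems \ref{1stpolar} and \ref{2ndpolar}, so that by the vanishing estimate $O(y^{M+1})$ near the origin coming from \eqref{BOr}, the bound \eqref{BOs} on $\textup{Li}_s(e^{-xy})$, and the decay at infinity recorded in \eqref{BO1}, the hypotheses of \cite[Theorem 2.2]{temme} are met and the integral on the left of \eqref{PE} continues analytically in $t$ to a neighborhood of $t=2-r-s$. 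The crucial structural remark is that $r+s\in\mathbb{N}$ with $r+s\ge 2$ forces $2-r-s$ to be a non-positive integer, hence a pole of $\Gamma(t)$; therefore $1/\Gamma(t)$ times the (now analytic) integral tends to $0$ as $t\to 2-r-s$, and \eqref{PE} collapses to
\begin{equation*}
\Theta(r,s,t,x)=\frac{\Gamma(1-r)\Gamma(t+r-1)}{x^{t+r-1}\Gamma(t)}\zeta(t+s+r-1)+\sum_{k=0}^{M}\frac{(-1)^k}{k!}\,\frac{\zeta(r-k)\Gamma(t+k)}{x^{t+k}\Gamma(t)}\,\zeta(t+k+s)+O(|t-(2-r-s)|).
\end{equation*}

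Next I would extract the limits termwise. In the first term, at $t=2-r-s$ the argument of $\Gamma(t+r-1)$ is $1-s\notin\mathbb{Z}$, so $\Gamma(t+r-1)\to\Gamma(1-s)$ and $x^{-(t+r-1)}\to x^{s-1}$, while $\Gamma(1-r)$ is finite and nonzero because $r\notin\mathbb{Z}$. The delicate factor is $\zeta(t+s+r-1)$, whose argument is exactly $1$ at $t=2-r-s$: it contributes a simple pole that precisely cancels the simple zero of $1/\Gamma(t)$. Using $\zeta(1+u)=u^{-1}+\gamma+O(u)$ together with $1/\Gamma(t)=(-1)^{r+s}(r+s-2)!\,(t-(2-r-s))+O(|t-(2-r-s)|^2)$ (from the residue of $\Gamma$ at the negative integer $2-r-s$), the product $\zeta(t+s+r-1)/\Gamma(t)$ tends to $(-1)^{r+s}(r+s-2)!$, giving the first term $(-1)^{r+s}x^{s-1}(r+s-2)!\,\Gamma(1-r)\Gamma(1-s)$ of \eqref{T3}.

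In the sum over $k$, every $\zeta(r-k)$ is finite and, at $t=2-r-s$, $\zeta(t+k+s)$ has argument $k-r+2\notin\mathbb{Z}$, hence no pole; so a nonzero limit can only arise from $\Gamma(t+k)/\Gamma(t)$. For $k\ge r+s-1$ the numerator $\Gamma(t+k)$ is finite at $t=2-r-s$ while $1/\Gamma(t)$ vanishes there, so those terms drop out, and for $0\le k\le r+s-2$ the computation of \eqref{Glim1} gives
\begin{equation*}
\lim_{t\to 2-r-s}\frac{\Gamma(t+k)}{\Gamma(t)}=(2-r-s)(3-r-s)\cdots(k+1-r-s)=(-1)^{k}\frac{(r+s-2)!}{(r+s-2-k)!}.
\end{equation*}
Combining this with $x^{-(t+k)}\to x^{r+s-2-k}$, $\zeta(t+k+s)\to\zeta(k-r+2)$, and the prefactor $(-1)^k/k!$ turns the $k$-th term into $\binom{r+s-2}{k}x^{r+s-2-k}\zeta(r-k)\zeta(k-r+2)$, so the sum reproduces the finite sum in \eqref{T3}; assembling the two pieces together with the $O(|t-(2-r-s)|)$ error term yields \eqref{T3}.

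The main obstacle I anticipate is the analytic-continuation step: choosing $M$ correctly and verifying the growth/decay estimates needed to push the left-hand integral of \eqref{PE} from its original half-plane of convergence to a neighborhood of the negative integer $t=2-r-s$ via \cite[Theorem 2.2]{temme}, and then carefully bookkeeping which of the competing singularities — the poles of $\Gamma(t+k)$, the pole of $\zeta(t+s+r-1)$ — survive against the simple zero of $1/\Gamma(t)$. Once this is in place, the residue evaluations are routine and entirely parallel to those already carried out in Theorems \ref{1stpolar} and \ref{2ndpolar}.
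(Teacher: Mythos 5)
Your proposal is correct and follows essentially the same route as the paper: it specializes Proposition \ref{first} with a suitably large $M$, uses \cite[Theorem 2.2]{temme} to continue the integral so that the $1/\Gamma(t)$ prefactor kills it at the pole $t=2-r-s$, and then identifies exactly the same surviving singularities (the simple pole of $\zeta(r+s+t-1)$ against the zero of $1/\Gamma(t)$, and $\Gamma(t+k)/\Gamma(t)$ for $0\le k\le r+s-2$) with the same limit evaluations. The only cosmetic difference is that the paper fixes $M=r+s+\lfloor|\Re(s)|\rfloor$ explicitly rather than leaving it as ``sufficiently large.''
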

	\begin{proof}
		Then, for $\textup{Re}(t)>1-r-s$, \eqref{BOr} and \eqref{BOs} gives, 
		\begin{align}
			y^{t-1} \textup{Li}_s (e^{-xy}) \bigg( \hspace{-0.2 cm} \textup{ Li}_r (e^{-y}) - \Gamma(1-r) y^{r-1} - \sum_{\substack{k=0 }}^{r+ s+\lfloor|\textup{Re}(s)|\rfloor } (-1)^k\zeta(r-k) \frac{y^k}{k!}\bigg) = O\left(y^{\epsilon-1}\right), \label{BOt2}
		\end{align} 
		for some $\e >0$. For $M=r+s+\lfloor|\textup{Re}(s)|\rfloor $, from \eqref{BO1} and \eqref{BOt2}, the integral on the left-hand side of \eqref{PE} is now defined and analytic in the extended region $\textup{Re}(t)>1-r-s$  by using \cite[Theorem 2.2]{temme}. Hence, due to the pole of $\Gamma(t)$ at $t=2-r-s$, we see
		\begin{align*}
			\lim_{t \to 2-r-s} \Bigg( \frac{1}{\Gamma(t)} \int_{0}^{\infty} y^{t-1} \textup{Li}_s (e^{-xy}) \bigg( \hspace{-0.2 cm} \textup{ Li}_r (e^{-y}) - \Gamma(1-r) y^{r-1} - \sum_{\substack{k=0 }}^{r+s+ \lfloor|\textup{Re}(s)|\rfloor } (-1)^k\zeta(r-k) \frac{y^k}{k!}\bigg) \, dy \Bigg) =0.
		\end{align*} 
		Hence, from \eqref{PE}
		\begin{align}
			\Theta(r,s,t,x) &= \frac{\Gamma(1-r)\Gamma(t+r-1)}{ x^{t+r-1}\Gamma(t)}\zeta(r+s+t-1) \notag \\
			& \quad + \sum_{\substack{k=0}}^{r+s+ \lfloor|\textup{Re}(s)|\rfloor } \frac{(-1)^k\zeta(r-k)\Gamma(t+k)}{ x^{t+k}k!\Gamma(t)} \zeta(s+t+k)  + O\left( |t-(2-r-s)| \right). \label{M1C}
		\end{align}
		Since $\Gamma(t)$ has a pole at $t=2-r-s$, only the terms with a pole in the numerator survive, as $t \to 2-r-s$. We list the poles of numerator at $t=2-r-s$: Note that, $\Gamma(t+k)$ has a simple pole only for $k \leq r+s-2$. For any $k$, $\zeta(s+t+k)$ has no pole since $r \notin \mathbb{Z}$ . $\Gamma(t+r-1)$ has no pole since $s \notin \mathbb{Z}$. $\zeta(r+s+t-1)$ has a simple pole. Hence, use the laurent series expansions of the functions on the right-hand side of \eqref{M1C} to get the required result \eqref{T3}.
	\end{proof}
	
	\section{Applications in modular relations}\label{Section3}

We start this Section by proving a recursive formula, Theorem \ref{molty}, for the generalized Mordell-Tornheim zeta function $\Theta(r,s,t,x)$. The motivation for this result originated from a recursive result for the Mordell-Tornheim zeta function $\zeta_{\textup{MT}}(r,s,t)$ obtained by Huard, Williams and Zhang \cite[Equation (1.6)]{huard}. We use the specific case of Theorem \ref{molty}, with $s=r=n$, as stated in Corollary \ref{Trrcor}, to then obtain the two-term functional equation given by Vlasenko-Zagier \eqref{vz2term} as a further implication.
		\begin{theorem}\label{molty}
		For any $r,s,t \in \mathbb{R}$ such that $r+s+t>2$, $r+t>1$ and $s+t>1$ and for any $n \in \mathbb{N}$, we have,
		\begin{align}
			\Theta(r,s,t,x) = \sum_{\ell=0}^{n} \binom{n}{\ell} x^{\ell} \Theta(r-n+\ell,s-\ell,t+n,x). \label{molltype1}
		\end{align}
	\end{theorem}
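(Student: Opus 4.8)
The plan is to go straight back to the series definition \eqref{Tdef}, which converges absolutely throughout $\mathscr{D}$, and to expand the factor $(n+mx)^{n}$ by the binomial theorem. Concretely, I would write
\begin{align*}
\frac{1}{n^{r}m^{s}(n+mx)^{t}}
&=\frac{(n+mx)^{n}}{n^{r}m^{s}(n+mx)^{t+n}}
=\sum_{\ell=0}^{n}\binom{n}{\ell}\,\frac{n^{n-\ell}(mx)^{\ell}}{n^{r}m^{s}(n+mx)^{t+n}}\\
&=\sum_{\ell=0}^{n}\binom{n}{\ell}\,\frac{x^{\ell}}{\,n^{r-n+\ell}m^{s-\ell}(n+mx)^{t+n}\,},
\end{align*}
and then sum over $n,m\ge 1$. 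Interchanging the finite $\ell$-sum with the double series over $(n,m)$ then yields \eqref{molltype1} at once.

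The one point that needs justification is this interchange, i.e.\ absolute convergence of each of the $n+1$ resulting series $\Theta(r-n+\ell,s-\ell,t+n,x)$ for $0\le\ell\le n$. This is exactly where the three hypotheses enter: one has $(r-n+\ell)+(t+n)=r+t+\ell\ge r+t>1$, $(s-\ell)+(t+n)=s+t+(n-\ell)\ge s+t>1$, and $(r-n+\ell)+(s-\ell)+(t+n)=r+s+t>2$, so every such triple lies in $\mathscr{D}$. With absolute convergence of all these series (and of the original one) in hand, Fubini/Tonelli legitimizes the rearrangement and the identity follows.

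An equivalent route, more in keeping with the ``recursive'' theme of this section, is induction on $n$: the case $n=0$ is a tautology, and the passage from $n$ to $n+1$ is obtained by applying the one-term splitting \eqref{Tsplit} to each summand $\Theta(r-n+\ell,s-\ell,t+n,x)$, reindexing the piece that carries the extra factor $x$ by $\ell\mapsto\ell-1$, and merging coefficients through Pascal's rule $\binom{n}{\ell}+\binom{n}{\ell-1}=\binom{n+1}{\ell}$ (with the endpoint terms $\ell=0$ and $\ell=n+1$ matching automatically). I do not expect a genuine obstacle in either approach; the only care required is the convergence bookkeeping above — equivalently, checking that \eqref{Tsplit} is applicable to each intermediate triple of arguments — which is immediate from $r+t>1$, $s+t>1$, $r+s+t>2$.
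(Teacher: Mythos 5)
Your proposal is correct, and it actually contains two valid proofs. Your second, ``equivalent route'' --- induction on $n$ via the one-step splitting \eqref{Tsplit} followed by the reindexing $\ell\mapsto\ell-1$ and Pascal's rule --- is precisely the argument the paper gives. Your primary route, however, is genuinely different and more direct: multiplying and dividing by $(n+mx)^{n}$ and expanding by the binomial theorem produces all $n+1$ terms of \eqref{molltype1} in one stroke, with the only bookkeeping being that each triple $(r-n+\ell,\,s-\ell,\,t+n)$ lies in $\mathscr{D}$ --- which you verify correctly from $r+t>1$, $s+t>1$, $r+s+t>2$ and $0\le\ell\le n$ (and since the $\ell$-sum is finite, even linearity of convergent series suffices for the interchange). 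What the binomial route buys is brevity and transparency: the combinatorial coefficient $\binom{n}{\ell}$ appears for the ``right'' reason rather than being reassembled term by term through Pascal's identity. What the paper's inductive route buys is that it runs entirely on the functional equation \eqref{Tsplit}, so it would survive in any setting where \eqref{Tsplit} holds by analytic continuation even if the defining series is no longer available. The only cosmetic issue in your write-up is the reuse of the letter $n$ both as the summation index in \eqref{Tdef} and as the depth of the recursion; this is harmless (the paper commits the same abuse) but worth renaming in a final version.
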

	\begin{proof}
		For $n=0$, it is vacuously true. For $n=1$, we can easily see that,
		\begin{align}
			\Theta(r,s,t,x) &= \sum_{n=1}^{\infty} \sum_{m=1}^{\infty} \frac{1}{n^{r-1} m^s (n+mx)^{t+1}} + x \sum_{n=1}^{\infty} \sum_{m=1}^{\infty} \frac{1}{n^r m^{s-1} (n+mx)^{t+1}} \notag \\
			&= \Theta(r-1,s,t+1,x)+x \Theta(r,s-1,t+1,x). \label{n=1 sum}
		\end{align}
		Hence, the statement holds for $n=1$. We now assume the identity is true for all $j<n$ and then prove that the statement holds for $j=n$. Since the statement holds for $j=n-1$, we have,
		\begin{align*}
			\Theta(r,s,t,x) = \sum_{\ell=0}^{n-1} \binom{n-1}{\ell} x^{\ell} \Theta(r-(n-1)+\ell,s-\ell,t+n-1,x).
		\end{align*}
		Use \eqref{n=1 sum} inside the summation to get,
		\begin{align*}
			&\Theta(r,s,t,x) = \sum_{\ell=0}^{n-1} \binom{n-1}{\ell} x^{\ell} \left( \Theta(r-n+\ell,s-\ell,t+n,x)+ x \Theta(r-(n-1)+\ell,s-\ell-1,t+n,x) \right) \\
			&= \sum_{\ell=0}^{n-1} \binom{n-1}{\ell} x^{\ell}  \Theta(r-n+\ell,s-\ell,t+n,x)+ x \sum_{\ell=0}^{n-1} \binom{n-1}{\ell} x^{\ell} \Theta(r-(n-1)+\ell,s-\ell-1,t+n,x)
		\end{align*}
		Replace the variable of summation $\ell$ by $\ell-1$ in the second term to get,
		\begin{align*}
			&\Theta(r,s,t,x) = \sum_{\ell=0}^{n-1} \binom{n-1}{\ell} x^{\ell}  \Theta(r-n+\ell,s-\ell,t+n,x)+ \sum_{\ell=1}^{n} \binom{n-1}{\ell-1} x^{\ell} \Theta(r-n+\ell,s-\ell,t+n,x)\\
			&= \Theta(r-\ell,s,t+n,x) + \sum_{\ell=1}^{n-1} \left( \binom{n-1}{\ell} + \binom{n-1}{\ell-1} \right) x^{\ell}  \Theta(r-n+\ell,s-\ell,t+n,x) + x^n \Theta(r,s-n,t+n,x) \\
			&= \Theta(r-\ell,s,t+n,x) + \sum_{\ell=1}^{n-1} \binom{n}{\ell}  x^{\ell}  \Theta(r-n+\ell,s-\ell,t+n,x) + x^n \Theta(r,s-n,t+n,x) \\
			&= \sum_{\ell=0}^{n} \binom{n}{\ell}  x^{\ell}  \Theta(r-n+\ell,s-\ell,t+n,x),
		\end{align*}
		where we have used the property $\binom{n-1}{\ell} + \binom{n-1}{\ell-1} = \binom{n}{\ell}$  of the Binomial coefficients in the second last step. Observe that $\Theta$ by definition is well defined at the arguments in all of the above steps. Hence, by the Principle of Mathematical Induction, \eqref{molltype1} is true for all $n \in \mathbb{N}$.
	\end{proof}
	\begin{corollary}\label{Trrcor}
		For $r,t \in \mathbb{R}$ such that $r+t>1$ and $2r+t>2$, and for any $x>0$,
		\begin{align*}
			\Theta(r,r,t,x) = \sum_{\ell=0}^{r} \binom{r}{\ell} x^{\ell} \Theta(\ell,r-\ell,t+r,x).
		\end{align*}
	\end{corollary}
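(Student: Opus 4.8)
The plan is to derive this identity directly as the specialization $s=r$, $n=r$ of Theorem \ref{molty}. Observe first that the right-hand side is a finite sum $\sum_{\ell=0}^{r}$ weighted by binomial coefficients $\binom{r}{\ell}$, so the statement is only meaningful when $r\in\mathbb{N}$; I read the hypotheses with this understood and take $n:=r$ as the integer parameter in Theorem \ref{molty}.

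The first step is to check that the three standing inequalities of Theorem \ref{molty}, namely $r+s+t>2$, $r+t>1$ and $s+t>1$, are satisfied once we put $s=r$. After this substitution the first inequality becomes $2r+t>2$, while the second and third both collapse to $r+t>1$; these are precisely the two hypotheses imposed in the Corollary, so Theorem \ref{molty} applies verbatim, with no additional assumptions needed.

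The second step is to substitute $s=r$ and $n=r$ into the conclusion \eqref{molltype1}. The general summand $\binom{n}{\ell}x^{\ell}\,\Theta(r-n+\ell,\,s-\ell,\,t+n,\,x)$ simplifies, using $r-n+\ell=\ell$ and $t+n=t+r$, to $\binom{r}{\ell}x^{\ell}\,\Theta(\ell,\,r-\ell,\,t+r,\,x)$, which is exactly the asserted formula.

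There is essentially no substantive obstacle: all the content sits inside Theorem \ref{molty}, and the only point worth a line of verification is that each $\Theta(\ell,r-\ell,t+r,x)$ on the right is evaluated within its region of absolute convergence, i.e. that $(\ell,r-\ell,t+r)\in\mathscr{D}$ for every $0\le\ell\le r$ under the stated hypotheses; this well-definedness is already recorded within the inductive proof of Theorem \ref{molty}. Hence the corollary follows immediately by invoking that theorem with $s=r$ and $n=r$.
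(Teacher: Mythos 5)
Your proposal is correct and coincides with the paper's own proof, which simply sets $s=r$ and $n=r$ in Theorem \ref{molty}; your additional verification of the hypotheses and of the implicit requirement $r\in\mathbb{N}$ is a welcome but inessential elaboration.
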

	\begin{proof}
		Put $s=r$ and $n=r$ in \eqref{molltype1} to get this result.
	\end{proof}
	We now obtain the functional equations proved by Vlasenko and Zagier \cite[Equation 11]{vz} as an application of the above Corollary \ref{Trrcor}.
	\subsection{Vlasenko and Zagier's functional equation}\label{VZsection}
	\begin{corollary}
		Functional equation in \eqref{vz2term} given by Vlasenko-Zagier holds. 
	\end{corollary}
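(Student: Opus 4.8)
The plan is to derive the Vlasenko--Zagier two-term functional equation \eqref{vz2term} from Corollary \ref{Trrcor} by first identifying the specialization $\Theta(r,s,t,x)$ at suitable integer arguments with the higher Herglotz functions $F_r(x)$, and then applying the inversion relation \eqref{Tinv}. The starting point is Corollary \ref{Trrcor} with the roles of the three arguments chosen so that two of the exponents in the denominator degenerate to $1$ (producing the digamma-type sums in \eqref{higherherglotz}) while the third tends to a value that makes the outer summation a $\zeta$-value. Concretely, I would look at $\Theta(1,1,t,x)$ or more generally $\Theta(r,r,t,x)$ with $r=1$ and track $t$; the inner sum $\sum_{n\geq1} n^{-1}(n+mx)^{-t}$, after partial fractions / the integral representation $n^{-1}(n+mx)^{-t}=\frac{1}{\Gamma(t)}\int_0^\infty\cdots$, should collapse to an expression involving $\psi(mx+1)$ when $t\to$ an appropriate integer, matching the series \eqref{higherherglotz}.

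**Key steps, in order.** First I would write down Corollary \ref{Trrcor} explicitly in the case that reduces the right-hand side to a sum of $\Theta(\ell,r-\ell,t+r,x)$ terms, and observe that after letting $t$ take the critical integer value the terms with $\ell=0$ and $\ell=r$ are the ``boundary'' terms carrying the Herglotz function, while the middle terms $1\le\ell\le r-1$ are plain double zeta / Mordell--Tornheim values expressible via \eqref{Tsplit} and known evaluations. Second, I would establish the dictionary $\Theta(\text{integer args})\leftrightarrow F_r(x)$: using the defining series \eqref{Tdef} and summing one of the two geometric-type inner series in closed form, one gets $\sum_{m\geq1} m^{-r}\psi(mx+1)$ plus elementary $\zeta$-corrections; this is where \eqref{higherherglotz} enters. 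Third, apply \eqref{Tinv}, $\Theta(r,s,t,x)=x^{-t}\Theta(s,r,t,1/x)$, to the same specialization: the left side gives $F_r(x)$-type data at $x$, the right side gives the same at $1/x$ with a factor $(-x)^{\,r-1}$ (the sign arising from analytically continuing $x^{-t}$ / from the $(-1)$'s produced by the $\zeta(r-k)$ with $k$ ranging over the relevant integers, exactly as in \eqref{T1C1}--\eqref{T3}). Fourth, equate the two expressions and collect the elementary terms: the $\zeta(\ell)\zeta(r-\ell+1)$ sum on the right of \eqref{vz2term} emerges from the middle $\Theta(\ell,r-\ell,\cdot,x)$ terms evaluated at their (non-singular) integer points, and the $\zeta(r+1)\big((-x)^r-1/x\big)$ term from the boundary/diagonal contributions.

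**The main obstacle** will be the bookkeeping of the degenerate cases: when the arguments of $\Theta$ hit integer values, the intermediate $\Theta(\ell,r-\ell,t+r,x)$ can individually develop poles in $t$ (precisely the phenomenon catalogued in Theorems \ref{1stpolar}--\ref{3rdpolar}), and one must check that these poles cancel in the combination dictated by Corollary \ref{Trrcor} so that the limit is finite and equals the stated right-hand side. In practice I expect to invoke the Laurent expansions \eqref{Glim1}--\eqref{zlim} (and their analogues) to see the $1/(t-\cdot)$ terms telescope, leaving only the constant terms, which must then be matched term-by-term against \eqref{vz2term} using $F(1)$-type constants such as \eqref{ef1} or, for general $r$, the values $\zeta(\ell)\zeta(r+1-\ell)$ and $\zeta(r-\ell+1,\ell)$. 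A secondary technical point is justifying that the series manipulations (interchange of $\sum_m$ with the closed-form inner sum, and with the limit in $t$) are legitimate in the relevant region, which follows from the absolute convergence built into $\mathscr{D}$ together with the asymptotics of $\psi$ used to define $F_r$.
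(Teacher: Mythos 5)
Your outline follows the paper's own route: specialize Corollary \ref{Trrcor} at $s=r$, peel off the $\ell=0$ and $\ell=r$ terms, invert the latter via \eqref{Tinv}, recognize $\Theta(0,r,t+r,x)=\sum_m \zeta(t+r,mx+1)/m^r$ so that the regularized limit as $t\to 1-r$ produces the digamma sums of \eqref{higherherglotz}, and let the middle terms supply the $\zeta(\ell+1)\zeta(r-\ell)$ products. However, two steps are asserted rather than solved, and both require a genuine idea.

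First, the middle terms are \emph{not} ``plain double zeta values at their non-singular integer points'': at $t=1-r$ they are $\Theta(\ell,r-\ell,1,x)$ for $1\le\ell\le r-1$, i.e.\ Mordell--Tornheim values with third exponent $1$, which do not factor into products of zeta values. The paper's proof spends most of its effort on exactly this point: it rewrites the binomial coefficient $\binom{r}{\ell}$ as a telescoping difference of partial alternating sums and then applies \eqref{Tsplit} \emph{in reverse} to recombine adjacent terms into $\Theta(\ell+1,r-\ell,t+r-1,x)$, whose third argument vanishes at $t=1-r$ so that each term factors as $\zeta(\ell+1)\zeta(r-\ell)$ (plus a boundary contribution $(1+(-1)^r)x^{r-1}\Theta(r-1,1,t+r,x)$ that must itself be evaluated by a partial-fraction computation yielding $\gamma\zeta(r)+\sum_m\psi(m/x+1)/m^r$). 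Your phrase ``expressible via \eqref{Tsplit} and known evaluations'' points in the right direction but skips the actual mechanism. Second, the decomposition by itself is a single identity expressing the unknown regularized value $\lim_{t\to1-r}\bigl(\Theta(r,r,t,x)-\zeta(r)(1+x^{-t})/(t-1+r)\bigr)$ in terms of the two Herglotz sums and the zeta products; equating it with the inversion symmetry $\Theta(r,r,t,x)=x^{-t}\Theta(r,r,t,1/x)$ gives nothing new, since the decomposition is already invariant under that symmetry. To close the argument one needs an \emph{independent} evaluation of that regularized limit, which the paper obtains from its Kronecker limit type formula (Theorem \ref{2ndpolar}, Case III, with $s=r$, $\ell=0$); only after equating the two evaluations does the right-hand side of \eqref{vz2term} emerge. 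Your step four (``matched term-by-term against \eqref{vz2term}'') reads as verifying a known answer rather than supplying this second input, so as written the derivation does not terminate.
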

	\begin{proof}
		Let $r \in \mathbb{N}$, $r\ne1$ be arbitrary and $t>1-r$. Let $x>0$. From Corollary \ref{Trrcor}, we have,
		\begin{align*}
			\Theta(r,r,t,x) = \sum_{\ell=0}^{r} \binom{r}{\ell} x^{\ell} \Theta(\ell,r-\ell,t+r,x).
		\end{align*}
		Separate out the first and the last term from the sum to get,
		\begin{align*}
			\Theta(r,r,t,x) =\Theta(0,r,t+r,x) + \sum_{\ell=1}^{r-1} \binom{r}{\ell} x^{\ell} \Theta(\ell,r-\ell,t+r,x) + x^r \Theta(r,0,t+r,x).
		\end{align*}
		Use inversion \eqref{Tinv} for the third term on the right-hand side to get,
		\begin{align}
			\Theta(r,r,t,x) =\Theta(0,r,t+r,x) + S(r,t) + x^{-t} \Theta(0,r,t+r,\tfrac{1}{x}), \label{rlevel}
		\end{align}
		where $S(r,t)$ is defined by
		\begin{align*}
			S(r,t):= \sum_{\ell=1}^{r-1} \binom{r}{\ell} x^{\ell} \Theta(\ell,r-\ell,t+r,x).
		\end{align*}
		We now simplify $S(r,t)$ as follows:
		\begin{align*}
			S(r,t)&= \sum_{\ell=1}^{r-1} \left( \sum_{i=1}^{\ell} (-1)^{\ell-i} \binom{r}{i} -  \sum_{j=1}^{\ell-1} (-1)^{\ell-j} \binom{r}{j} \right) x^{\ell} \Theta(\ell,r-\ell,t+r,x) \\
			&= \sum_{\ell=1}^{r-1} \sum_{i=1}^{\ell} (-1)^{\ell-i} \binom{r}{i}  x^{\ell} \Theta(\ell,r-\ell,t+r,x) - \sum_{\ell=1}^{r-1} \sum_{j=1}^{\ell-1} (-1)^{\ell-j} \binom{r}{j} x^{\ell} \Theta(\ell,r-\ell,t+r,x).
		\end{align*}
		Replace $\ell$ by $\ell+1$ in the second term of the right-hand side to get,
		\begin{align*}
			S(r,t)&= \sum_{\ell=1}^{r-1} \sum_{i=1}^{\ell} (-1)^{\ell-i} \binom{r}{i}  x^{\ell} \Theta(\ell,r-\ell,t+r,x) + \sum_{\ell=0}^{r-2} \sum_{j=1}^{\ell} (-1)^{\ell-j} \binom{r}{j} x^{\ell+1} \Theta(\ell+1,r-\ell-1,t+r,x).
		\end{align*}
		Since $\ell=0$ makes the summand zero, hence the second term can be rewritten as,
		\begin{align*}
			S(r,t)&= \sum_{\ell=1}^{r-1} \sum_{i=1}^{\ell} (-1)^{\ell-i} \binom{r}{i}  x^{\ell} \Theta(\ell,r-\ell,t+r,x) + \sum_{\ell=1}^{r-2} \sum_{j=1}^{\ell} (-1)^{\ell-j} \binom{k}{j} x^{\ell+1} \Theta(\ell+1,r-\ell-1,t+r,x).
		\end{align*}
		Separate the $\ell=r-1$ term from the first term on the right-hand side to get,
		\begin{align}
			S(r,t)&= x^{r-1} \Theta(r-1,1,t+r,x) \sum_{i=1}^{r-1}(-1)^{r-i+1}  \binom{r}{i} \notag \\
			&+\sum_{\ell=1}^{r-2} \sum_{i=1}^{\ell} (-1)^{\ell-i} \binom{r}{i}  x^{\ell} \Theta(\ell,r-\ell,t+r,x) + \sum_{\ell=1}^{r-2} \sum_{j=1}^{\ell} (-1)^{\ell-j} \binom{k}{j} x^{\ell+1} \Theta(\ell+1,r-\ell-1,t+r,x)\notag\\
			&= x^{r-1} \Theta(r-1,1,t+r,x) \sum_{i=1}^{r-1}(-1)^{r-i+1}  \binom{r}{i}\notag\\
			&+\sum_{\ell=1}^{r-2} \sum_{i=1}^{\ell} (-1)^{\ell-i} \binom{r}{i}  x^{\ell} \bigg( \Theta(\ell,r-\ell,t+r,x) +  x \Theta(\ell+1,r-\ell-1,t+r,x) \bigg)\notag\\
			&= x^{r-1} \Theta(r-1,1,t+r,x) \sum_{i=1}^{r-1}(-1)^{r-i+1}  \binom{r}{i}+\sum_{\ell=1}^{r-2} \sum_{i=1}^{\ell} (-1)^{\ell-i} \binom{r}{i}  x^{\ell}  \Theta(\ell+1,r-\ell,t+r-1,x), \label{srt simp}
		\end{align}
		where we have used \eqref{Tsplit} in the last step. We note the following fact obtained by the Binomial theorem,
		\begin{align*}
			\sum_{i=1}^{r-1}(-1)^{r-i+1}\binom{r}{i}  \buildrel \rm \emph{i} \rightarrow \emph{r}-\emph{i} \over =  -\sum_{i=1}^{r-1}(-1)^{i}\binom{r}{i} =1+(-1)^r = \left\{
			\begin{array}{ll}
				0, & \text{if r is odd}, \\
				2, & \text{if r is even}. \\
			\end{array} 
			\right.
		\end{align*}
		Also, we know that 
		\begin{align*}
			\sum_{i=1}^{\ell}  (-1)^{\ell-i} \binom{r}{i}  = 	(-1)^{\ell} \sum_{i=1}^{\ell}  (-1)^{i}  \binom{r}{i} = (-1)^{\ell} \left(-1+	\sum_{i=0}^{\ell}  (-1)^{i}  \binom{r}{i} \right) = (-1)^{\ell+1} + \binom{r-1}{\ell},
		\end{align*}
		where we have used that $\sum_{i=0}^{\ell}  (-1)^{i}  \binom{r}{i} = (-1)^{\ell}\binom{r-1}{\ell}$.  Hence, using the two facts mentioned above, \eqref{srt simp} becomes,
		\begin{align}
			S(r,t)= (1+(-1)^r)x^{r-1} \Theta(r-1,1,t+r,x) +\sum_{\ell=1}^{r-2} \left( (-1)^{\ell+1} + \binom{r-1}{\ell} \right)  x^{\ell}  \Theta(\ell+1,r-\ell,t+r-1,x). \label{srt0eqn}
		\end{align}
		Take the limit $t \to 1-r$ in \eqref{srt0eqn} to get,
		\begin{align}
			\lim_{t\to 1-r} S(r,t)= (1+(-1)^r)x^{r-1} \lim_{t\to 1-r}  \Theta(r-1,1,t+r,x) +\sum_{\ell=1}^{r-2} \left( (-1)^{\ell+1} + \binom{r-1}{\ell} \right)  x^{\ell} \zeta(\ell+1) \zeta(r-\ell), \label{srt1eqn}
		\end{align}
		since $\Theta(a,b,0,x)=\zeta(a)\zeta(b)$ for any $a,b>1$. The limit on right-hand side is dealt as follows,
		\begin{align}
			\lim_{t\to 1-r}  \Theta(r-1,1,t+r,x) &= \lim_{t \to 1-r}  x^{-t-r}\Theta(1,r-1,t+r,\tfrac{1}{x}) \notag \\ &=\lim_{t \to 1-r} \left( x^{-t-r} \sum_{n=1}^{\infty} \sum_{m=1}^{\infty} \frac{1}{n m^{r-1} (n+\frac{m}{x})^{t+r}} \right) \notag \\ &= \frac{1}{x} \sum_{n=1}^{\infty} \sum_{m=1}^{\infty} \frac{1}{n m^{r-1} (n+\frac{m}{x})} \notag \\
			&= \frac{1}{x} \sum_{m=1}^{\infty} \frac{1}{m^{r-1}} \sum_{n=1}^{\infty}  \frac{1}{n (n+\frac{m}{x})} \notag\\
			&= \frac{1}{x} \sum_{m=1}^{\infty} \frac{1}{m^{r-1}} \sum_{n=1}^{\infty} \left( \frac{\frac{x}{m}}{n} - \frac{\frac{x}{m}}{n+\frac{m}{x}} \right)  \notag\\
			&=  \sum_{m=1}^{\infty} \frac{1}{m^{r}} \left( \gamma + \psi(\tfrac{m}{x}+1) \right)  \notag\\
			&= \gamma \zeta(r) + \sum_{m=1}^{\infty} \frac{\psi(\frac{m}{x}+1)}{m^{r}} \label{Tlim}
		\end{align}
		Hence, from \eqref{srt1eqn} and \eqref{Tlim}, we get,
		\begin{align}
			\lim_{t\to 1-r} S(r,t) &= (1+(-1)^r)x^{r-1} \left( \sum_{m=1}^{\infty} \frac{\psi(\frac{m}{x}+1)}{m^{r}} +\gamma \zeta(r) \right)  +\sum_{\ell=1}^{r-2} \left( (-1)^{\ell+1} + \binom{r-1}{\ell} \right)  x^{\ell} \zeta(\ell+1) \zeta(r-\ell). \label{srtLim}
		\end{align}
		We also need the following evaluation,
		\begin{align}
			\Theta(0,r,t+r,x)&= \sum_{n=1}^{\infty} \sum_{m=1}^{\infty} \frac{1}{m^r (n+mx)^{t+r}} \notag \\
			&=  \sum_{m=1}^{\infty} \frac{1}{m^r}  \sum_{n=1}^{\infty} \frac{1}{(n+mx)^{t+r}} \notag \\ 
			&= \sum_{m=1}^{\infty} \frac{\zeta(t+r,mx+1)}{m^r} \notag \\
			&= \sum_{m=1}^{\infty} \frac{\zeta(t+r,mx+1) -\frac{1}{t-1+r}}{m^r} + \frac{\zeta(r)}{t-1+r} \label{Teval}
		\end{align}
		Hence, we can also see that,
		\begin{align}
			x^{-t} \Theta(0,r,t+r,\tfrac{1}{x}) = x^{-t} \left( \sum_{m=1}^{\infty} \frac{\zeta(t+r,\tfrac{m}{x}+1) -\frac{1}{t-1+r}}{m^r} + \frac{\zeta(r)}{t-1+r} \right). \label{Teval1}
		\end{align} 
		Substitute \eqref{Teval} and \eqref{Teval1} in \eqref{rlevel} to get,
		\begin{align*}
			\Theta(r,r,t,x) - \frac{\zeta(r) (1+x^{-t})}{t-1+r} &= \sum_{m=1}^{\infty} \frac{\zeta(t+r,mx+1) -\frac{1}{t-1+r}}{m^r}  + S(r,t) \\& + x^{-t}  \sum_{m=1}^{\infty} \frac{\zeta(t+r,\tfrac{m}{x}+1) -\frac{1}{t-1+r}}{m^r} .
		\end{align*}
		Take the limit of both sides of the above equation as $t \to 1-r$, and use \eqref{srtLim} to get
		\begin{align}
			&\lim_{t \to 1-r} \left( \Theta(r,r,t,x) - \frac{\zeta(r) (1+x^{-t})}{t-1+r} \right) \notag \\ &= -\sum_{m=1}^{\infty} \frac{\psi(mx+1)}{m^r} + \lim_{t \to 1-r} (S(r,t)) - x^{r-1} \sum_{m=1}^{\infty} \frac{\psi(\frac{m}{x}+1)}{m^r}\notag \\
			&= -\sum_{m=1}^{\infty} \frac{\psi(mx+1)}{m^r} +(1+(-1)^r)x^{r-1} \left( \sum_{m=1}^{\infty} \frac{\psi(\frac{m}{x}+1)}{m^{r}} +\gamma \zeta(r) \right) \notag \\ 
			& +\sum_{\ell=1}^{r-2} \left((-1)^{\ell+1} + \binom{r-1}{\ell} \right)  x^{\ell} \zeta(\ell+1) \zeta(r-\ell) - x^{r-1} \sum_{m=1}^{\infty} \frac{\psi(\frac{m}{x}+1)}{m^r}\notag \\
			&= -\sum_{m=1}^{\infty} \frac{\psi(mx+1)}{m^r} + (-1)^r x^{r-1} \sum_{m=1}^{\infty} \frac{\psi(\frac{m}{x}+1)}{m^{r}} +(1+(-1)^r)x^{r-1} \gamma \zeta(r) \notag\\
			& +\sum_{\ell=1}^{r-2} \left((-1)^{\ell+1} + \binom{r-1}{\ell} \right)  x^{\ell} \zeta(\ell+1) \zeta(r-\ell). \label{aftLim}
		\end{align}
		Put $s=r$ and $\ell=0$ in case 3 of Theorem \ref{2ndpolar} to get,
		\begin{align}
			\Theta(r,r,t,x) &= \frac{\zeta(r)\left(1 + x^{r-1} \right)}{(t-1+r)}  +  x^{r-1} \zeta(r)(\gamma-\log(x)) \notag \\
			& + \gamma \zeta(r)  + x^{r-1} \sum_{\substack{k=1}}^{r-2} \binom{r-1}{k}  \frac{1}{ x^{k}} \zeta(r-k)\zeta(k+1)   + O(|t-1+r|).  \notag
		\end{align}
		Upon adding and subtracting suitable terms, we get,
		\begin{align}
			&\lim_{t \to 1-r} \left( \Theta(r,r,t,x) - \frac{\zeta(r) (1+x^{-t})}{t-1+r} \right) \notag \\
			&= \lim_{t \to 1-r} \left( \frac{\zeta(r)\left(x^{r-1}-x^{-t}\right)}{t-1+r} \right)  +  x^{r-1} \zeta(r)(\gamma-\log(x))  + \gamma \zeta(r)  + x^{r-1} \sum_{\substack{k=1}}^{r-2} \binom{r-1}{k}  \frac{1}{ x^{k}} \zeta(r-k)\zeta(k+1) \notag \\
			&= x^{r-1} \zeta(r) \log(x) +  x^{r-1} \zeta(r)(\gamma-\log(x)) + \gamma \zeta(r)  + x^{r-1} \sum_{\substack{k=1}}^{r-2} \binom{r-1}{k}  \frac{1}{ x^{k}} \zeta(r-k)\zeta(k+1) \notag \\
			&=  x^{r-1} \zeta(r)\gamma + \zeta(r)  \gamma  + x^{r-1} \sum_{\substack{k=1}}^{r-2} \binom{r-1}{k}  \frac{1}{ x^{k}} \zeta(r-k)\zeta(k+1) \notag \\
			&= x^{r-1} \zeta(r)\gamma + \zeta(r)  \gamma  +  \sum_{\substack{k=1}}^{r-2} \binom{r-1}{k}  x^{k} \zeta(r-k)\zeta(k+1), \label{Tconst}
		\end{align}
		where we have replaced $k$ by $r-1-k$ in the last step.
		Hence, from \eqref{aftLim} and \eqref{Tconst}, we see,
		\begin{align*}
			&-\sum_{m=1}^{\infty} \frac{\psi(mx+1)}{m^r} + (-1)^r x^{r-1} \sum_{m=1}^{\infty} \frac{\psi(\frac{m}{x}+1)}{m^{r}} +(1+(-1)^r)x^{r-1} \gamma \zeta(r) \notag\\
			& +\sum_{\ell=1}^{r-2} \left( (-1)^{\ell+1} + \binom{r-1}{\ell} \right)  x^{\ell} \zeta(\ell+1) \zeta(r-\ell) = x^{r-1} \zeta(r)\gamma + \zeta(r)  \gamma  +  \sum_{\substack{k=1}}^{r-2} \binom{r-1}{k}  x^{k} \zeta(r-k)\zeta(k+1).
		\end{align*}
		On cancellation and simplification, we get, with the convention that $\zeta(1)=\gamma$ as taken by \emph{Vlasenko-Zagier},
		\begin{align*}
			\sum_{m=1}^{\infty} \frac{\psi(mx+1)}{m^r} + (-x)^{r-1} \sum_{m=1}^{\infty} \frac{\psi(\frac{m}{x}+1)}{m^{r}} &= (-1)^{r} x^{r-1} \gamma \zeta(r) -\sum_{\ell=1}^{r-2} (-1)^{\ell}  x^{\ell} \zeta(\ell+1) \zeta(r-\ell)   -\zeta(r)  \gamma. \\
			&= -\sum_{\ell=0}^{r-1} (-x)^{\ell} \zeta(\ell+1) \zeta(r-\ell).
		\end{align*}
		Use the functional equation $\psi(y+1)=\psi(y)+ y^{-1}$ in both the summands on the left-hand side of the above equation and simplify to get \eqref{vz2term}.
	\end{proof}
	\subsection{Guinand's functional equations}
	\begin{corollary}
		Functional equations in \eqref{guinand2} and \eqref{guinand1} given by Guinand hold.
	\end{corollary}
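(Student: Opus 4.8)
The plan is to read both of Guinand's identities off the inversion formula \eqref{Tinv} applied to $\Theta(0,0,t,x)$. For \eqref{guinand2} the relevant point lies in the region $\mathscr{D}$ of absolute convergence, so the argument is immediate; for \eqref{guinand1} it lies on the singular hyperplane $r+s+t=2$, so a short limiting argument around $t=2$ is required. For \eqref{guinand2}, fix an integer $z>2$, so that $(0,0,z)\in\mathscr{D}$; the series representation of the polygamma function gives $\psi^{(z-1)}(1+mx)=(-1)^{z}(z-1)!\sum_{n=1}^{\infty}(n+mx)^{-z}$, and summing over $m$ (the interchange of the two summations being legitimate by the absolute convergence defining $\mathscr{D}$),
\begin{align*}
	\sum_{m=1}^{\infty}\psi^{(z-1)}(1+mx)=(-1)^{z}(z-1)!\,\Theta(0,0,z,x).
\end{align*}
Now \eqref{Tinv} with $r=s=0$ and $t=z$ reads $\Theta(0,0,z,x)=x^{-z}\Theta(0,0,z,\tfrac1x)$; cancelling the common factor $(-1)^{z}(z-1)!$ and multiplying by $x^{z/2}$ gives \eqref{guinand2} at once (no Kronecker limit formula from Section \ref{Sec2} is needed here).

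For \eqref{guinand1}, I would analyse the Laurent expansion of $\Theta(0,0,t,x)$ at $t=2$. For $\Re(t)>2$ one has $\Theta(0,0,t,x)=\sum_{m=1}^{\infty}\zeta(t,1+mx)$ with $\zeta(\cdot,\cdot)$ the Hurwitz zeta function, and, isolating the part responsible for the divergence at $t=2$,
\begin{align*}
	\Theta(0,0,t,x)=\sum_{m=1}^{\infty}\left(\zeta(t,1+mx)-\frac{(mx)^{1-t}}{t-1}\right)+\frac{x^{1-t}\zeta(t-1)}{t-1},
\end{align*}
where (by Euler--Maclaurin) the displayed sum converges locally uniformly for $\Re(t)>1$, so it is analytic there and at $t=2$ equals $\sum_{m=1}^{\infty}\bigl(\psi'(1+mx)-\tfrac{1}{mx}\bigr)$. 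Expanding the remaining term via $\zeta(1+u)=u^{-1}+\gamma+O(u)$ then yields
\begin{align*}
	\Theta(0,0,t,x)=\frac{x^{-1}}{t-2}+\left(\sum_{m=1}^{\infty}\left(\psi'(1+mx)-\frac{1}{mx}\right)+\frac{\gamma-1-\log x}{x}\right)+O(|t-2|).
\end{align*}

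On the other hand \eqref{Tinv} gives $\Theta(0,0,t,x)=x^{-t}\Theta(0,0,t,\tfrac1x)$ as an identity of meromorphic functions of $t$; expanding $x^{-t}=x^{-2}\bigl(1-(t-2)\log x+\cdots\bigr)$ and equating the constant terms in the Laurent expansions of the two sides at $t=2$ yields, after the $\gamma$-dependent terms cancel,
\begin{align*}
	x\sum_{m=1}^{\infty}\left(\psi'(1+mx)-\frac{1}{mx}\right)-\log x=\frac{1}{x}\sum_{m=1}^{\infty}\left(\psi'\bigl(1+\tfrac{m}{x}\bigr)-\tfrac{x}{m}\right),
\end{align*}
which is exactly \eqref{guinand1} once one uses $-\tfrac12\log(1/x)=\tfrac12\log x$ there.

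The only genuine obstacle is the $z=2$ case: one must check that $\Theta(0,0,t,x)$ continues meromorphically past $t=2$ with a single simple pole of residue $x^{-1}$, extract the constant Laurent coefficient exactly while keeping track of the elementary constants that ultimately cancel, and justify interchanging the $m$-summation with the limit $t\to2$ — all of which the decomposition above is designed to make routine. The remainder is bookkeeping with \eqref{Tinv}.
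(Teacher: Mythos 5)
Your proposal is correct and follows essentially the same route as the paper: \eqref{guinand2} is read off directly from the inversion formula \eqref{Tinv} with $r=s=0$ together with the polygamma series $\psi^{(j)}(z)=(-1)^{j+1}j!\,\zeta(j+1,z)$, and \eqref{guinand1} is obtained from the same Hurwitz-zeta decomposition with the regularizing term $\frac{(mx)^{1-t}}{t-1}$ followed by the limit $t\to 2$. The only difference is presentational — you equate constant Laurent coefficients of both sides at $t=2$ (tracking the $\gamma$-terms until they cancel), whereas the paper takes the limit of the regularized identity directly and evaluates $\lim_{t\to2}\bigl(x^{1-t}-\tfrac1x\bigr)\tfrac{\zeta(t-1)}{t-1}=-\tfrac{\log x}{x}$ in one step; the computations are equivalent.
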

	\begin{proof}
		We use the inversion formula \eqref{Tinv} with $r=s=0$ and $t \in \mathbb{R}$ such that $t>2$,
		\begin{align}
			\Theta(0,0,t,x) = x^{-t} \Theta(0,0,t,\tfrac{1}{x}). \label{step1}
		\end{align}
		From \cite[Equation 6.4.10, p.260]{Handbook}, for $j \in \mathbb{N}$ and $\psi^{(j)}$ being the $j^{th}$ derivative of $\psi$, we have,
		\begin{align}
			\psi^{(j)}(z) = (-1)^{j+1} j! \sum_{\ell=0}^{\infty} \frac{1}{(\ell+z)^{j+1}} = (-1)^{j+1} j! \zeta(j+1,z), \label{psi der}
		\end{align}
		where $\zeta(s,a)$ is the Hurwitz zeta function. Then, from \eqref{step1}, we have
		\begin{align}
			\sum_{n=1}^{\infty} \sum_{m=1}^{\infty} \frac{1}{(n+mx)^{t}} = x^{-t} \sum_{n=1}^{\infty} \sum_{m=1}^{\infty} \frac{1}{(n+\tfrac{m}{x})^{t}}. \label{step2}
		\end{align}
		Replace $n$ by $n-1$ in both the sums on left-hand and right-hand side of the equation and then use \eqref{psi der} to see that, for $t \in \mathbb{N}$ such that $t \geq 3$,
		\begin{align*}
			(-1)^{t}(t-1)!\sum_{m=1}^{\infty} \psi^{(t-1)}(1+mx)
			=(-1)^{t}(t-1)!x^{-t}\sum_{m=1}^{\infty} \psi^{(t-1)}\left(1+\frac{m}{x}\right).
		\end{align*}
		Cancel $(-1)^t (t-1)!$ on both sides and then multiply both sides by $x^{\frac{t}{2}}$ to get \eqref{guinand2}. We now prove the second equation of Guinand \eqref{guinand1}. From the definition of the Hurwitz zeta function, $\zeta(s,a) = \sum_{n=0}^{\infty} (n+a)^{-s}$, and \eqref{step2}, we can see,
		\begin{align}
			\sum_{m=1}^{\infty} \zeta(t,mx+1) = x^{-t} 	\sum_{m=1}^{\infty} \zeta(t,\tfrac{m}{x}+1). \label{Hzeta}
		\end{align}
		From \cite[Equation 4.3]{Analogues}, we have,
		\begin{align*}
			\zeta(z,x) = \frac{x^{1-z}}{z-1} + O(x^{-z}).
		\end{align*}
		Adding and subtracting suitable terms from the summands on both left-hand side and right-hand side,
		\eqref{Hzeta} becomes,
		\begin{align*}
			\sum_{m=1}^{\infty} \left( \zeta(t,mx+1) + \frac{(mx)^{1-t}}{t-1} - \frac{(mx)^{1-t}}{t-1}  \right) = x^{-t} \sum_{m=1}^{\infty} \left(\zeta(t,\tfrac{m}{x}+1) + \frac{\left(\frac{m}{x}\right)^{1-t}}{t-1} - \frac{\left(\frac{m}{x}\right)^{1-t}}{t-1} \right).
		\end{align*}
		Simplify to get,
		\begin{align}
			\sum_{m=1}^{\infty} \left( \zeta(t,mx+1)  - \frac{(mx)^{1-t}}{t-1} \right) + \frac{x^{1-t}\zeta(t-1)}{t-1} = x^{-t} \sum_{m=1}^{\infty} \left(\zeta(t,\tfrac{m}{x}+1) - \frac{\left(\frac{m}{x}\right)^{1-t}}{t-1} \right) + \frac{\zeta(t-1)}{x(t-1)}. \label{b4lim}
		\end{align}
		See the following limits hold, where we use \eqref{psi der} for the first one
		\begin{align}
			\lim_{t \to 2} \left(  \zeta(t,y+1)  - \frac{y^{1-t}}{t-1}  \right) = \zeta(2,y+1) - \frac{1}{y} = \psi'(1+y) - \frac{1}{y}, \label{L1}
		\end{align}
		\begin{align}
			\lim_{t \to 2} \left( \left( x^{1-t} - \frac{1}{x} \right) \frac{\zeta(t-1)}{t-1} \right) = -\frac{\log(x)}{x} = - 2 \frac{\log(x)}{2x}= -  \frac{\log(x)}{2x} + \frac{\log(\frac{1}{x})}{2x} . \label{L2}
		\end{align}
		Tend $t \to 2$ in \eqref{b4lim}, use \eqref{L1} and \eqref{L2} and then multiply both the left-hand side and right-hand side of the resultant by $x$ to get \eqref{guinand1}.
	\end{proof}
	\subsection{Ramanujan's functional equation}
	\begin{corollary}
		The first equality in the functional equation given by Ramanujan in \eqref{w1.26} holds.
	\end{corollary}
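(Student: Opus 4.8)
The plan is to specialise the inversion formula \eqref{Tinv} to $r=s=0$ and to compare the constant terms in the Laurent expansions of its two sides about $t=1$. By \eqref{Hzeta} (the case $r=s=0$ of \eqref{Tinv}, rewritten via the definition of the Hurwitz zeta function), $\Theta(0,0,t,x)=\sum_{m=1}^{\infty}\zeta(t,mx+1)$ and hence $\sum_{m}\zeta(t,mx+1)=x^{-t}\sum_{m}\zeta(t,\tfrac mx+1)$. Both sides carry a simple pole at $t=1$, so the first task is to split off this pole and isolate a holomorphic remainder whose value at $t=1$ is Ramanujan's series $\sum_{n}\phi(nx)$.

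To that end I would use the Hermite/Euler--Maclaurin expansion of the Hurwitz zeta function in the form $\zeta(t,a)=\dfrac{a^{1-t}}{t-1}+\dfrac{a^{-t}}{2}+R(t,a)$, where $R(\cdot,a)$ is entire in $t$ and $R(t,a)=O_t\!\bigl(a^{-\Re(t)-1}\bigr)$ uniformly on compact $t$-sets. Substituting $\zeta(t,mx+1)=\zeta(t,mx)-(mx)^{-t}$ and summing over $m$ (valid first for $\Re(t)>2$, then by analytic continuation) yields
\begin{align*}
\Theta(0,0,t,x)=\frac{x^{1-t}\zeta(t-1)}{t-1}-\frac{x^{-t}\zeta(t)}{2}+\Phi(t,x),\qquad \Phi(t,x):=\sum_{m=1}^{\infty}R(t,mx),
\end{align*}
with $\Phi(\cdot,x)$ holomorphic on $\Re(t)>0$. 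The crucial elementary computation is
\begin{align*}
R(1,a)=\log a-\psi(a)-\tfrac1{2a}=-\phi(a),
\end{align*}
obtained from $\zeta(t,a)=\tfrac1{t-1}-\psi(a)+O(t-1)$ and $\tfrac{a^{1-t}}{t-1}=\tfrac1{t-1}-\log a+O(t-1)$; since the tail of $\Phi(t,\cdot)$ is dominated by $\sum_{m}(mx)^{-\sigma_0-1}$ for $\Re(t)\ge\sigma_0>0$, the limit $t\to1$ can be taken termwise and $\Phi(1,x)=-\sum_{m=1}^{\infty}\phi(mx)$. (Alternatively one may reach the same decomposition from the Mellin--Barnes representation $\Theta(0,0,t,x)=\tfrac1{\Gamma(t)}\int_0^{\infty}\tfrac{y^{t-1}}{(e^{xy}-1)(e^{y}-1)}\,dy$ by subtracting a suitable elementary function, such as $e^{-y}\bigl(\tfrac1{xy^2}+\tfrac{1-x}{2xy}\bigr)$, from the integrand, in the spirit of Proposition \ref{first}.)

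Reading off the Laurent expansion at $t=1$ from the displayed decomposition, using $\zeta(0)=-\tfrac12$, $\zeta'(0)=-\tfrac12\log(2\pi)$, $\zeta(t)=\tfrac1{t-1}+\gamma+O(t-1)$ and $x^{1-t}=1-(t-1)\log x+O(|t-1|^2)$, one obtains
\begin{align*}
\Theta(0,0,t,x)=\frac{-(x+1)/(2x)}{t-1}+\left(\tfrac12\log\tfrac{x}{2\pi}+\tfrac{\log x-\gamma}{2x}-\sum_{m=1}^{\infty}\phi(mx)\right)+O(|t-1|).
\end{align*}
Inserting this expansion, together with the one obtained by replacing $x$ with $\tfrac1x$, into \eqref{Tinv}, the polar parts cancel (a useful consistency check, since $\Res_{t=1}\Theta(0,0,t,x)=-\tfrac{x+1}{2x}$ must be compatible with the inversion), and equating the constant terms, multiplying through by $-x$ and collecting the logarithms, gives
\begin{align*}
x\sum_{m=1}^{\infty}\phi(mx)-\sum_{m=1}^{\infty}\phi\!\left(\tfrac mx\right)=\frac{(x-1)\gamma}{2}+\frac{(1-x)\log(2\pi)}{2}+\frac{(x+1)\log x}{2},
\end{align*}
which is equivalent to the first equality in \eqref{w1.26} after clearing the factors $\sqrt x$ and $\sqrt{1/x}$.

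The only genuinely delicate steps are (i) justifying that the $R$-decomposition provides the analytic continuation of $\Theta(0,0,t,x)$ to $\Re(t)>0$ and that $\Phi(t,x)\to\Phi(1,x)$ as $t\to1$, and (ii) keeping careful track of which $\log x$ contributions stem from $x^{1-t}$ and which from $x^{-t}$ in the two expansions; with those settled, Ramanujan's identity follows by routine Laurent-series algebra.
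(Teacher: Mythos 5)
Your proposal is correct and follows essentially the same route as the paper: both start from $\Theta(0,0,t,x)=x^{-t}\Theta(0,0,t,\tfrac1x)$, subtract the first two terms $\tfrac{a^{1-t}}{t-1}+\tfrac12 a^{-t}$ of the Hurwitz zeta expansion (your $R(t,a)$ is exactly the paper's $\varphi(t,a)$), and identify the remainder at $t=1$ with $-\phi(a)$. The only cosmetic difference is that you carry out the Laurent expansion at $t=1$ and match constant terms explicitly, whereas the paper first rearranges into the form of Dixit's Theorem 4.1 and then cites his Corollary 4.2 for the limit $t\to1$; your residue computation $-\tfrac{x+1}{2x}$ and the resulting identity agree with \eqref{w1.26}.
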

	\begin{proof}
		From \eqref{Hzeta} we have,
		\begin{align*}
			\sum_{m=1}^{\infty} \zeta(t,mx+1) = x^{-t} 	\sum_{m=1}^{\infty} \zeta(t,\tfrac{m}{x}+1).
		\end{align*}
		From \cite[Equation 4.3]{Analogues}, we have,
		\begin{align*}
			\zeta(z,x) = \frac{x^{1-z}}{z-1} + \frac{1}{2}x^{-z} + O(x^{-z-1}).
		\end{align*}
		Adding and subtracting suitable terms from the summands on both left-hand side and right-hand side, we deduce,
		\begin{align}
			&\sum_{m=1}^{\infty} \left( \zeta(t,mx+1) + \frac{(mx)^{1-t}}{t-1} + \frac{1}{2}(mx)^{-t} - \frac{(mx)^{1-t}}{t-1} - \frac{1}{2}(mx)^{-t} \right) \notag \\
			&= x^{-t} 	\sum_{m=1}^{\infty} \left(\zeta(t,\tfrac{m}{x}+1) + \frac{\left(\frac{m}{x}\right)^{1-t}}{t-1} + \frac{1}{2}\left(\frac{m}{x}\right)^{-t} - \frac{\left(\frac{m}{x}\right)^{1-t}}{t-1} - \frac{1}{2}\left(\frac{m}{x}\right)^{-t} \right). \label{almDixit}
		\end{align}
		Then,  on using $\zeta(z,a+1) = \zeta(z,a) - a^{-z}, $\eqref{almDixit} becomes,
		\begin{align*}
			&\sum_{m=1}^{\infty} \left( \zeta(t,mx) + \frac{(mx)^{1-t}}{t-1} + \frac{1}{2}(mx)^{-t} - \frac{(mx)^{1-t}}{t-1} - \frac{1}{2}(mx)^{-t} - (mx)^{-t} \right) \\
			&= x^{-t} \sum_{m=1}^{\infty} \left(\zeta(t,\tfrac{m}{x}) + \frac{\left(\frac{m}{x}\right)^{1-t}}{t-1} + \frac{1}{2}\left(\frac{m}{x}\right)^{-t} - \frac{\left(\frac{m}{x}\right)^{1-t}}{t-1} - \frac{1}{2}\left(\frac{m}{x}\right)^{-t} - \left(\frac{m}{x}\right)^{-t} \right).
		\end{align*}
		Let us denote $\varphi(t,x)$ as follows:
		\begin{align*}
			\varphi(t,x) := \zeta(t,x) - \frac{x^{1-t}}{t-1} - \frac{1}{2}x^{-t}.
		\end{align*}
		Then, we see,
		\begin{align*}
			\left(\sum_{m=1}^{\infty} \varphi(t,mx)  + \frac{x^{1-t}}{t-1} \zeta(t-1) - \frac{1}{2} x^{-t} \zeta(t) \right)= x^{-t} \left( \sum_{m=1}^{\infty} \varphi(t,\tfrac{m}{x})  + \frac{\left(\frac{1}{x}\right)^{1-t}}{t-1} \zeta(t-1) - \frac{1}{2} \left(\frac{1}{x}\right)^{-t} \zeta(t) \right).
		\end{align*}
		Take the second term on the left-hand side to the right-hand side and vice versa, to get,
		\begin{align*}
			\left(\sum_{m=1}^{\infty} \varphi(t,mx)  - \frac{\frac{1}{x}}{t-1} \zeta(t-1) - \frac{1}{2} x^{-t} \zeta(t) \right)= x^{-t} \left( \sum_{m=1}^{\infty} \varphi(t,\tfrac{m}{x})  - \frac{x}{t-1} \zeta(t-1) - \frac{1}{2} \left(\frac{1}{x}\right)^{-t} \zeta(t) \right).
		\end{align*}
		Multiply both sides by $x^{\frac{t}{2}}$ to get the first equality of \cite[Theorem 4.1]{Analogues}. As shown in \cite[Corollary 4.2]{Analogues}, letting $t \to 1$, we get the first equality of the Ramanujan's transformation formula \eqref{w1.26}.
	\end{proof}

	\section{Kronecker limit type formula for $\Theta(r,s,t,x)$ in the Second variable}\label{Section4}
	The method used in Section \ref{Sec2} cannot be used to find Kronecker limit type formula for $\Theta(r,s,t,x)$ in the \textit{second variable} $s$. Hence, we come with an alternative method to obtain the same, which involves the series evaluations of $\Theta(r,s,t,x)$ in terms of Herglotz-Zagier function \eqref{herglotzdef} and higher Herglotz functions \eqref{higherherglotz}. The evaluation is obtained by the partial fraction method, which is very effective in the theory of multiple zeta functions. For instance, Gangl, Kaneko and Zagier \cite{partialfraction} used the partial fraction, for integers $i,j \geq 2$, 
	\begin{align*}
		\frac{1}{m^i n^j} =  \sum_{a+b=i+j} \left( \binom{a-1}{i-1} \frac{1}{(m+n)^an^b}    + \binom{a-1}{j-1}  \frac{1}{(m+n)^an^b}   \right),
	\end{align*}
	to prove the Euler decomposition formula, 
	\begin{align*}
		\zeta(i) \zeta(j) = \sum_{a+b=i+j} \left( \binom{a-1}{i-1} \zeta(a,b)  +  \binom{a-1}{j-1} \zeta(a,b)  \right),
	\end{align*} 
	where $\zeta(s_1,s_2)$ is the double zeta function as defined in \eqref{doublezetadef}. Guo and Xie \cite{guoxie} used the partial fraction method to obtain natural shuffle algebra structure for the integral representations of the multiple-zeta values. We prove the partial fraction in the following Lemma \ref{parfra} before using it to obtain the series evaluation.
		\begin{lemma}\label{parfra}
		Fix $r, t \in\mathbb{N} \cup \{0\}$ such that $r+t\geq1$. Then, for $n,y>0$, the following partial fraction holds,
		\begin{align}
			\frac{1}{n^r (n+y)^t} = (-1)^r \sum_{j=0}^{t-1} \binom{j+r-1}{j} \frac{1}{y^{j+r} (n+y)^{t-j}} + \sum_{i=0}^{r-1} \binom{i+t-1}{i} \frac{(-1)^i}{n^{r-i} y^{t+i}}. \label{parts1}
		\end{align}
		with the convention that $\binom{-1}{0}=1$, $\binom{c}{-1}=0$ for any $c \in \mathbb{Z}\backslash \{-1\}$ with an exception $\binom{-1}{-1} =1$.
	\end{lemma}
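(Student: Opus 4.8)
The plan is to prove \eqref{parts1} by induction on $r+t$. The conceptual content is nothing more than the partial-fraction decomposition of the rational function $n\mapsto 1/\bigl(n^r(n+y)^t\bigr)$ into its two principal parts: the second sum on the right of \eqref{parts1} is the principal part at $n=0$, obtained by expanding $(n+y)^{-t}=y^{-t}(1+n/y)^{-t}=\sum_{i\ge0}(-1)^i\binom{t+i-1}{i}y^{-t-i}n^i$ and keeping the terms with $0\le i\le r-1$, and the first sum is the principal part at $n=-y$, obtained by expanding $n^{-r}=(-1)^ry^{-r}\bigl(1-(n+y)/y\bigr)^{-r}=(-1)^ry^{-r}\sum_{j\ge0}\binom{r+j-1}{j}y^{-j}(n+y)^j$ and keeping $0\le j\le t-1$. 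One could therefore argue directly: the difference of the two sides of \eqref{parts1} is a rational function of $n$ whose poles at $0$ and $-y$ have been cancelled, and which tends to $0$ as $n\to\infty$ because $r+t\ge1$, hence it is identically zero. I will instead carry out the equivalent bookkeeping inductively, which keeps the binomial conventions transparent.

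For the base cases I would take $r=0$ (so $t\ge1$): the conventions $\binom{j-1}{j}=0$ for $j\ge1$ and $\binom{-1}{0}=1$ collapse the first sum to its $j=0$ term $1/(n+y)^t$, while the second sum is empty, giving \eqref{parts1}; the case $t=0$ (so $r\ge1$) is symmetric. For the inductive step, with $r,t\ge1$ and \eqref{parts1} known for all smaller values of $r+t$, I would start from the elementary identity
\begin{align*}
\frac{1}{n^r(n+y)^t}=\frac1y\left(\frac{1}{n^r(n+y)^{t-1}}-\frac{1}{n^{r-1}(n+y)^t}\right),
\end{align*}
apply the induction hypothesis to the two fractions on the right (the pairs $(r,t-1)$ and $(r-1,t)$), divide each resulting expansion by $y$, and reindex the shifted sums ($j\mapsto j-1$ in one, $i\mapsto i-1$ in another) so that everything is expressed in the basis $\{1/(y^{j+r}(n+y)^{t-j})\}$ and $\{(-1)^i/(n^{r-i}y^{t+i})\}$. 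Collecting the coefficient of $1/(y^{j+r}(n+y)^{t-j})$ gives $(-1)^r\bigl(\binom{j+r-2}{j-1}+\binom{j+r-2}{j}\bigr)=(-1)^r\binom{j+r-1}{j}$ for $1\le j\le t-1$, with the $j=0$ term equal to $(-1)^r\binom{r-2}{0}=(-1)^r=(-1)^r\binom{r-1}{0}$; likewise the coefficient of $(-1)^i/(n^{r-i}y^{t+i})$ is $\binom{i+t-2}{i-1}+\binom{i+t-2}{i}=\binom{i+t-1}{i}$ for $1\le i\le r-1$, with the $i=0$ term equal to $\binom{t-2}{0}=1=\binom{t-1}{0}$. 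Both computations are just Pascal's rule, and the result is precisely the right-hand side of \eqref{parts1}.

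The step I expect to be most error-prone is the treatment of the degenerate ranges $r\in\{0,1\}$ and $t\in\{0,1\}$: there one must verify that the stated conventions for $\binom{-1}{0}$, $\binom{-1}{-1}$ and $\binom{c}{-1}$ make the boundary ($j=0$ and $i=0$) terms of the reindexed sums, and the empty-sum cases $r=0$ or $t=0$, come out correctly. Away from these boundary terms the argument is the plain Pascal recursion, so the whole proof is routine once the conventions are pinned down.
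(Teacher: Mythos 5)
Your induction is correct, and it takes a genuinely different route from the paper. The paper proves \eqref{parts1} by brute force: it puts the right-hand side over the common denominator $n^r y^{r+t}(n+y)^t$, expands the resulting numerator $A(n,y,r,t)$ as a polynomial in $n$, and shows that the coefficient of $n^a$ vanishes for every $1\le a\le r+t-1$ by evaluating two binomial sums in closed form (one of these evaluations is imported from \emph{Mathematica}), so that $A(n,y,r,t)=A(0,y,r,t)=y^{r+t}$. Your argument replaces all of that with the splitting identity
\begin{align*}
\frac{1}{n^r(n+y)^t}=\frac1y\left(\frac{1}{n^r(n+y)^{t-1}}-\frac{1}{n^{r-1}(n+y)^t}\right),
\end{align*}
an induction on $r+t$, and a single application of Pascal's rule for each coefficient; I checked the reindexing and the boundary terms ($j=0$, $i=0$, and the degenerate cases $r\in\{0,1\}$, $t\in\{0,1\}$ under the stated conventions) and they all come out as you claim. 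What your approach buys is a shorter, fully self-contained proof that needs no nontrivial binomial-sum identities and in particular no computer-algebra verification; what the paper's approach buys is an explicit formula for each coefficient of the numerator polynomial, which is not needed for the lemma itself. Your parenthetical remark that the identity is just the two principal parts of the partial-fraction decomposition (so that the difference of the two sides is a rational function of $n$ with no poles, vanishing at infinity since $r+t\ge1$) is also a complete proof on its own and is perhaps the cleanest way to see why the result must be true.
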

	\begin{proof}
		Start with the expression on the right-hand side of \eqref{parts1} and take least common denominator to get,
		\begin{align}
			&(-1)^r \sum_{j=0}^{t-1} \binom{j+r-1}{j} \frac{y^{t-j}(n+y)^{j}}{y^{r+t} (n+y)^{t}} + \sum_{i=0}^{r-1} \binom{i+t-1}{i} \frac{(-1)^i n^i y^{r-i}}{n^{r} y^{r+t}} \notag \\
			&=(-1)^r \sum_{j=0}^{t-1} \binom{j+r-1}{j} \frac{n^{r} y^{t-j}(n+y)^{j}}{n^{r} y^{r+t} (n+y)^{t}} + \sum_{i=0}^{r-1} \binom{i+t-1}{i} \frac{(-1)^i n^i y^{r-i} (n+y)^{t}}{n^{r} y^{r+t} (n+y)^{t}} \notag \\
			&= \frac{A(n,y,r,t)}{n^r y^{r+t} (n+y)^t}, \label{calc}
		\end{align}
		where 
		\begin{align*}
			A(n,y,r,t):= (-1)^r  n^{r} \sum_{j=0}^{t-1} \binom{j+r-1}{j} y^{t-j}(n+y)^{j} + (n+y)^{t} \sum_{i=0}^{r-1} \binom{i+t-1}{i} (-1)^i n^i y^{r-i}.
		\end{align*}
		Clearly, for a fixed $r,t \in \mathbb{N}$, $A(n,y,r,t)$ is a polynomial of two variables $n,y$.
		\begin{align}
			A(n,y,r,t)&= (-1)^r  n^{r} \sum_{j=0}^{t-1} \binom{j+r-1}{j} y^{t-j} \sum_{c=0}^{j} \binom{j}{c} n^c y^{j-c}  + \sum_{d=0}^{t} \binom{t}{d} n^d y^{t-d} \sum_{i=0}^{r-1} \binom{i+t-1}{i} (-1)^i n^i y^{r-i} \notag \\
			&=  (-1)^r   \sum_{j=0}^{t-1} \sum_{c=0}^{j} \binom{j+r-1}{j}  \binom{j}{c} n^{r+c}   y^{t-c}   + \sum_{d=0}^{t} \sum_{i=0}^{r-1}  \binom{i+t-1}{i} \binom{t}{d} (-1)^i n^{d+i} y^{t-d+r-i} \notag \\
			&=  (-1)^r  \sum_{c=0}^{t-1} n^{r+c}  \sum_{j=c}^{t-1}  \binom{j+r-1}{j}  \binom{j}{c}    y^{t-c}   + \sum_{d=0}^{t} \sum_{i=0}^{r-1}  \binom{i+t-1}{i} \binom{t}{d} (-1)^i n^{d+i} y^{t-d+r-i}.   \label{last-n}
		\end{align}
		$A(n,y,r,t)$ can be seen as a polynomial in $n$ with coefficients as polynomials in $y$. Clearly, the highest degree of $n$ possible in $A(n,y,r,t)$ is $r+t-1$, where $r,t \in \mathbb{N}$ are fixed. We now show explicitly evaluate the coefficients of $n^a$ for $1\leq a\leq r+t-1$ in two cases.\\
		\textbf{Case 1:} For $1\leq a\leq r-1$.\\
		Observe that the first term on the right-hand side of \eqref{last-n} does not contribute to the coefficient of $n^a$ since least power of $n$ possible is $r$. From the second term, the only way to get $n^a$ is when $d\leq a$ and $i=a-d$. Hence we get,
		\begin{align}
			\textup{Coefficient of }n^a \textup{ in } A(n,y,r,t) & \hspace{0.23 cm}= \sum_{d=0}^{a} \binom{a-d+t-1}{a-d} \binom{t}{d} (-1)^{a-d}y^{t+r-a} \notag \\
			&  \hspace{0.23 cm}= (-1)^{a} y^{t+r-a} \sum_{d=0}^{a} \binom{a-d+t-1}{a-d} \binom{t}{d} (-1)^d  \notag  \\
			&\buildrel \rm \emph{d} \rightarrow \emph{a}-\emph{d} \over = y^{t+r-a} \sum_{d=0}^{a} \binom{d+t-1}{d} \binom{t}{a-d} (-1)^d   \notag \\
			&\hspace{0.23 cm}=   y^{t+r-a} \sum_{d=0}^{a} \frac{(d+t-1)!}{d!(t-1)!} \cdot \frac{t!}{(a-d)!(d+t-a)!} \cdot (-1)^d   \notag  \\
			&\hspace{0.23 cm}=  \sum_{d=0}^{a} \frac{(d+t-1)!}{d!(a-d)!(d+t-a)!} (-1)^d  \notag  \\
			& \hspace{0.23 cm}= \frac{t y^{t+r-a}}{a!} \sum_{d=0}^{a} (-1)^d \binom{a}{d} p(d), \label{p(d)1}
		\end{align}
		where $p(d)= \frac{(d+t-1)!}{(d+t-a)!} = (d+t-1)(d+t-2) \cdots (d+t-a+1)$ is a polynomial in $d$ of degree $a-1$. As explained in \cite[Section 3]{Spivey} and \cite[Corollary 2]{ruiz}, the sum on the right-hand side of \eqref{p(d)1} is zero, since the polynomial $p(d)$ has a degree smaller than $a$. Hence, the coefficient of $n^a$ in $A(n,y,r,t)$ is zero for $1\leq a\leq r-1$.\\
		\textbf{Case 2:} For $r\leq a\leq r+t-1$. \\
		Observe that the term $c=a-r$ in the first term of the right-hand side of \eqref{last-n} alone contributes to the coefficient of $n^a$. The condition $ a\leq r+t-1$ ensures that $a-r \leq t-1$. From the second term of the right-hand side of \eqref{last-n}, for any $i$, $d=a-i$ alone contributes to the coefficient of $n^a$. Since  $r\leq a\leq r+t-1$, we have $d=a-i\leq t$ for any $i$. Since $i\leq r-1$, we always have $d\geq0$. Hence we get,
		\begin{align}
			\textup{Coefficient of }n^a \textup{ in } A(n,y,r,t) &= y^{t+r-a}\bigg(	A_1(n,y,r,t)+A_2(n,y,r,t)\bigg). \label{AA1A2}
		\end{align}
		where,
		\begin{align}
			A_1(n,y,r,t)&:= (-1)^r  \sum_{j=a-r}^{t-1} \binom{j+r-1}{j} \binom{j}{a-r}, \label{A1def} \\  
			A_2(n,y,r,t)&:= \sum_{i=0}^{r-1} \binom{i+t-1}{i} \binom{t}{a-i} (-1)^i. \label{A2def}
		\end{align}
		We now evaluate both $	A_1(n,y,r,t)$ and $	A_2(n,y,r,t)$ explicitly. Firstly, after expanding the binomials in $A_1(n,y,r,t)$, we get,
		\begin{align}
				A_1(n,y,r,t) &= \frac{(-1)^r}{(r-1)!(a-r)!} \sum_{j=a-r}^{t-1} \frac{(j+r-1)!}{(j+r-a)!} \notag \\
				&\hspace{-0.44 cm}\buildrel \rm \emph{j} \rightarrow \emph{i}+\emph{a}-\emph{r} \over = \frac{(-1)^r}{(r-1)!(a-r)!} \sum_{i=0}^{r+t-a-1} \frac{(i+a-1)!}{i!} \notag \\
				&= \frac{(-1)^r(a-1)!}{(r-1)!(a-r)!} \sum_{i=0}^{r+t-a-1} \binom{i+a-1}{i} \notag \\
				&= \frac{(-1)^r(a-1)!}{(r-1)!(a-r)!} \binom{r+t-1}{r+t-a-1} \notag  \\
				&=\frac{(-1)^r}{a} \times \frac{(r+t-1)!}{(r-1)!(t-1)!} \times \frac{(t-1)!}{(a-r)!(r+t-a-1)!} \notag \\
				&= \frac{(-1)^r (r+t-a)}{a} \binom{r+t-1}{t} \binom{t}{a-r}. \label{A1Fin}
		\end{align}
		Next, we evaluate $A_2(n,y,r,t)$. Since $\binom{t}{a-i}=0$ for $i<a-t$, it is enough to consider the sum between $a-t\leq i \leq r-1$, and on expanding the binomials, we get,
		\begin{align}
			A_2(n,y,r,t) &=t \sum_{i=a-t}^{r-1} \frac{(-1)^i}{i! (a-i)!} \times \frac{(i+t-1)!}{(i+t-a)!} \notag \\
			&\hspace{-0.44 cm}\buildrel \rm \emph{i} \rightarrow \emph{j}+\emph{a}-\emph{t} \over = t \sum_{j=0}^{r+t-a-1} \frac{(-1)^{j+a-t}}{(j+a-t)!(t-j)!} \times \frac{(j+a-1)!}{j!} \notag \\
			&= \frac{(-1)^{a+t}}{(t-1)!} \sum_{j=0}^{r+t-a-1} (-1)^j \binom{t}{j} \frac{(j+a-1)!}{(j+a-t)!} \notag \\
			&=  \frac{(-1)^{a+t}}{(t-1)!}  \times \frac{(-1)^{a+r+t}(a-r-t)}{at (r-1)!} \binom{t}{r+t-a} (r+t-1)! \notag \\
			&= - \frac{(-1)^r(r+t-a)}{a} \binom{r+t-1}{t} \binom{t}{a-r}, \label{A2Fin}
		\end{align}
		where we have used the fact $\binom{c}{d} = \binom{c}{c-d}$ in the last step and the following identity obtained from \textit{Mathematica 11} in the second last step,
		\begin{align*}
			\sum_{j=0}^{r+t-a-1} (-1)^j \binom{t}{j} \frac{(j+a-1)!}{(j+a-t)!}=\frac{(-1)^{a+r+t}(a-r-t)}{at (r-1)!} \binom{t}{r+t-a} (r+t-1)!.
		\end{align*}
		On substituting \eqref{A1Fin} and \eqref{A2Fin} in \eqref{AA1A2}, we can see that the coefficient of $n^a$ is zero for $r\leq a \leq r+t-1$.	Thus, both the cases combined, we have shown that the coefficient of $n^a$ is zero for all $1\leq a\leq r+t-1$, and thus, $A(n,y,r,t)$ is independent of $n$, and hence, 
		\begin{align*}
			A(n,y,r,t) = A(0,y,r,t)=y^{r+t}.
		\end{align*}
		Put this in \eqref{calc} to get the required result.
	\end{proof}	
	We now use Lemma \ref{parfra} to prove the follwing theorem for the series evaluations of $\Theta(r,s,t,x)$.
		\begin{theorem}\label{THM-EVAL}
		For $r,t \in \mathbb{N} \cup \{0\}$ such that $r+t\geq1$, and $s \in \mathbb{C}$ such that $\textup{Re}(s)>2-r-t$, we have, 
		\begin{align}
			\Theta(r,s,t,x)  &=  \sum_{i=0}^{r-2} \frac{(-1)^i}{x^{t+i}}  \binom{i+t-1}{i}  \zeta(s+t+i) \zeta(r-i) - \frac{(-1)^r}{x^{r+t-1}} \binom{r+t-2}{t-1}  \sum_{m=1}^{\infty} \frac{\gamma + \psi(mx+1)}{m^{r+s+t-1}} \notag \\
			&\quad + (-1)^r  \sum_{j=0}^{t-2} \binom{j+r-1}{j}  \frac{(-1)^{t-j}}{(t-j-1)!} \frac{1}{x^{j+r}} \sum_{m=1}^{\infty} \frac{\psi^{(t-j-1)}(mx+1)}{m^{r+s+j}}, \label{T-H-EV}
		\end{align}
		with the convention that $\binom{-1}{0}=1$, $\binom{c}{-1}=0$ for any $c \in \mathbb{Z}\backslash \{-1\}$ with an exception $\binom{-1}{-1} =1$.
	\end{theorem}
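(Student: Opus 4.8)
The plan is to start from the partial fraction identity of Lemma~\ref{parfra} applied to the inner double series defining $\Theta(r,s,t,x)$. Writing
\[
\Theta(r,s,t,x)=\sum_{m=1}^{\infty}\frac{1}{m^{s}}\sum_{n=1}^{\infty}\frac{1}{n^{r}(n+mx)^{t}},
\]
I would substitute $y=mx$ in \eqref{parts1}, which splits the inner sum over $n$ into a block of terms of the shape $\sum_{n}(n+mx)^{-(t-j)}$ (with extra powers of $mx$ in the denominator) and a block of terms of the shape $\sum_{n}n^{-(r-i)}$ (again with powers of $mx$). The first block, after pulling the $m$-powers out, produces $\sum_{n\ge 1}(n+mx)^{-(t-j)}=\zeta(t-j,mx+1)$, and the second block produces $\sum_{n\ge 1}n^{-(r-i)}=\zeta(r-i)$. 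This is the heart of the computation: the partial fraction turns the genuinely two-dimensional sum into one-dimensional Hurwitz-zeta and Riemann-zeta pieces.

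Next I would convert the Hurwitz values into digamma derivatives. For the $j$-block with $0\le j\le t-2$ we have $t-j\ge 2$, so by \eqref{psi der}, $\zeta(t-j,mx+1)=\frac{(-1)^{t-j}}{(t-j-1)!}\psi^{(t-j-1)}(mx+1)$; summing against $m^{-s}$ and the leftover $m^{-(t+\text{power})}$ from $y^{j+r}=(mx)^{j+r}$ gives exactly the term $\sum_{m}\psi^{(t-j-1)}(mx+1)/m^{r+s+j}$ in \eqref{T-H-EV}, with the stated binomial and sign. The boundary index $j=t-1$ in \eqref{parts1} corresponds to $\zeta(1,mx+1)$, which diverges; but it must be paired with the $i=r-1$ term from the other block, whose $n$-sum $\sum_n n^{-1}$ also diverges. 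The standard regularization $\sum_{n\le N}\bigl(\tfrac1n-\tfrac1{n+mx}\bigr)\to \gamma+\psi(mx+1)$ combines these two divergent pieces into the single convergent term $-\frac{(-1)^r}{x^{r+t-1}}\binom{r+t-2}{t-1}\sum_m\frac{\gamma+\psi(mx+1)}{m^{r+s+t-1}}$. The remaining $i$-block terms $0\le i\le r-2$ give $\zeta(r-i)\,\zeta(s+t+i)\,x^{-(t+i)}$ times the binomial, which is the first sum in \eqref{T-H-EV}.

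The main obstacle—and the step requiring care—is the interchange of the $m$-summation with the (formally divergent) $n$-summation, i.e.\ justifying that one may split the inner sum via \eqref{parts1} term by term before summing over $m$. I would handle this by first truncating the inner sum at $n\le N$, applying the (finite, hence unconditionally valid) partial fraction identity, isolating the two telescoping-type partial sums $\sum_{n\le N}n^{-1}$ and $\sum_{n\le N}(n+mx)^{-1}$ whose difference converges to $\gamma+\psi(mx+1)$, and only then letting $N\to\infty$; absolute convergence of every surviving series (guaranteed by $\operatorname{Re}(s)>2-r-t$, since the worst exponent is $r+s+t-1$ with real part $>1$) lets me pass the limit inside the $m$-sum by dominated convergence. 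A secondary point is the bookkeeping of the degenerate low-degree cases $r\in\{0,1\}$ or $t\in\{0,1\}$, where some blocks are empty; these are dispatched by the binomial conventions $\binom{-1}{0}=1$, $\binom{c}{-1}=0$, $\binom{-1}{-1}=1$ already fixed in the statement, so the displayed formula remains literally correct. After assembling the three groups of terms and matching signs and binomials against \eqref{parts1}, the identity \eqref{T-H-EV} follows.
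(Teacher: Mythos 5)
Your proposal is correct and follows essentially the same route as the paper: apply Lemma \ref{parfra} with $y=mx$ to the inner sum, pair the two divergent boundary terms ($j=t-1$ and $i=r-1$) into $\gamma+\psi(mx+1)$ via the series definition of the digamma function, and convert the remaining blocks into $\zeta(s+t+i)\zeta(r-i)$ and $\psi^{(t-j-1)}(mx+1)$ terms using \eqref{psi der}. Your extra care with the truncation at $n\le N$ before passing to the limit is a slightly more explicit justification of the rearrangement than the paper gives, but the argument is the same.
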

	\begin{proof}
		From the definition \eqref{Tdef}, we can see 
		\begin{align}
			\Theta(r,s,t,x)&= \sum_{n=1}^{\infty} \sum_{m=1}^{\infty} \frac{1}{n^r m^s (n+mx)^t} = \sum_{m=1}^{\infty} \frac{1}{m^s} \sum_{n=1}^{\infty} \frac{1}{n^r (n+mx)^t}. \label{EvStart}
		\end{align}
		Use Lemma \ref{parfra} with $y=mx$ in \eqref{EvStart} to get,
		\begin{align*}
			\Theta(r,s,t,x) &= \sum_{m=1}^{\infty} \frac{1}{m^s} \sum_{n=1}^{\infty} \left( (-1)^r \sum_{j=0}^{t-1} \binom{j+r-1}{j} \frac{1}{(mx)^{j+r} (n+mx)^{t-j}} + \sum_{i=0}^{r-1} \binom{i+t-1}{i} \frac{(-1)^i}{n^{r-i} (mx)^{t+i}} \right) \\
			&= \sum_{m=1}^{\infty} \frac{1}{m^s} \sum_{n=1}^{\infty} \left( (-1)^r \sum_{j=0}^{t-2} \binom{j+r-1}{j} \frac{1}{(mx)^{j+r} (n+mx)^{t-j}} + \sum_{i=0}^{r-2} \binom{i+t-1}{i} \frac{(-1)^i}{n^{r-i} (mx)^{t+i}} \right) \\
			&\quad+  \sum_{m=1}^{\infty} \frac{1}{m^s} \sum_{n=1}^{\infty} \left( (-1)^r \binom{r+t-2}{t-1} \frac{1}{(mx)^{r+t-1}(n+mx)} + (-1)^{r-1} \binom{r+t-2}{r-1} \frac{1}{n (mx)^{r+t-1}}  \right) \\
			&= \sum_{m=1}^{\infty} \frac{1}{m^s} \sum_{n=1}^{\infty} \left( (-1)^r \sum_{j=0}^{t-2} \binom{j+r-1}{j} \frac{1}{(mx)^{j+r} (n+mx)^{t-j}} + \sum_{i=0}^{r-2} \binom{i+t-1}{i} \frac{(-1)^i}{n^{r-i} (mx)^{t+i}} \right) \\
			&\quad - (-1)^r \binom{r+t-2}{t-1} \frac{1}{x^{r+t-1}} \sum_{m=1}^{\infty} \frac{1}{m^{r+s+t-1}} \sum_{n=1}^{\infty} \left( \frac{1}{n} - \frac{1}{n+mx}  \right),
		\end{align*}
		where we have used $\binom{r+t-2}{r-1} = \binom{r+t-2}{t-1}$. From the series definition of the digamma function $\psi(x)$, we have,
		\begin{align*}
			\Theta(r,s,t,x) &= \sum_{m=1}^{\infty} \frac{1}{m^s} \sum_{n=1}^{\infty} \left( (-1)^r \sum_{j=0}^{t-2} \binom{j+r-1}{j} \frac{1}{(mx)^{j+r} (n+mx)^{t-j}} + \sum_{i=0}^{r-2} \binom{i+t-1}{i} \frac{(-1)^i}{n^{r-i} (mx)^{t+i}} \right) \\
			&\quad-(-1)^r \binom{r+t-2}{t-1} \frac{1}{x^{r+t-1}} \sum_{m=1}^{\infty} \frac{\gamma + \psi(mx+1)}{m^{r+s+t-1}}.
		\end{align*} 
		From \eqref{psi der}, we have, for $j \in \mathbb{N}, j \geq 2$
		\begin{align}
			\frac{(-1)^j}{(j-1)!}\psi^{(j-1)}(z+1) = \sum_{\ell=1}^{\infty} \frac{1}{(\ell+z)^{j}}. \label{psidash}
		\end{align}
		Hence, we can simplify $\Theta(r,s,t,x)$ as
		\begin{align*}
			\Theta(r,s,t,x) &= (-1)^r  \sum_{j=0}^{t-2} \binom{j+r-1}{j} \frac{1}{x^{j+r}} \sum_{m=1}^{\infty} \frac{1}{m^{r+s+j}} \sum_{n=1}^{\infty} \frac{1}{(n+mx)^{t-j}} \\ &\quad +  \sum_{i=0}^{r-2} \frac{(-1)^i}{x^{t+i}} \binom{i+t-1}{i}  \sum_{m=1}^{\infty} \frac{1}{m^{s+t+i}} \sum_{n=1}^{\infty}  \frac{1}{n^{r-i}} -  \frac{(-1)^r}{x^{r+t-1}} \binom{r+t-2}{t-1} \sum_{m=1}^{\infty} \frac{\gamma + \psi(mx+1)}{m^{r+s+t-1}}.
		\end{align*}
		Use \eqref{psidash} to get,
		\begin{align*}
			\Theta(r,s,t,x) &= (-1)^r  \sum_{j=0}^{t-2} \binom{j+r-1}{j}  \frac{(-1)^{t-j}}{(t-j-1)!} \frac{1}{x^{j+r}} \sum_{m=1}^{\infty} \frac{\psi^{(t-j-1)}(mx+1)}{m^{r+s+j}}   \\
			&\quad +  \sum_{i=0}^{r-2} \frac{(-1)^i}{x^{t+i}}  \binom{i+t-1}{i}  \zeta(s+t+i) \zeta(r-i) - \frac{(-1)^r}{x^{r+t-1}} \binom{r+t-2}{t-1}  \sum_{m=1}^{\infty} \frac{\gamma + \psi(mx+1)}{m^{r+s+t-1}}.
		\end{align*}
		This is the required result \eqref{T-H-EV}.
	\end{proof}
	\subsection{Mixed functional equations} The two-term functional equation given by Guinand \eqref{guinand2} has higher derivatives of $\psi(x)$ in the summand while the one given by Vlasenko-Zagier \eqref{vz2term} has the higher power of $m$ in the denominator. We call the functional equation obtained in \eqref{mixed}, the mixed functional equations since they contain both, a series containing higher derivatives of $\psi(x)$ and containing higher powers of $m$ in the denominator each.
	\begin{remark}\label{REMARK1}
		The identity \eqref{newIDex} holds true.
	\end{remark}
	\begin{proof}
		Use \eqref{Tsplit} twice to get,
		\begin{align}
			\Theta(2,2,t,x) = \Theta(0,2,t+2,x) + 2x \Theta(1,1,t+2,x) + x^2 \Theta(2,0,t+2,x).  \label{remark1}
		\end{align}
		Use \eqref{Tinv} for the third term on the right-hand side of \eqref{remark1} and tend $t\to0$ to get,
		\begin{align*}
			\zeta^2(2) = \Theta(0,2,2,x) + 2x \Theta(1,1,2,x) + \Theta(0,2,2,\tfrac{1}{x}).
		\end{align*}
		Then, use Theorem \ref{THM-EVAL} thrice and simplify to get \eqref{newIDex}.
	\end{proof}
	
	 Such identitites are new in literature. We now give a family of mixed functional equations in the following theorem as a direct corollary of Theorem \ref{THM-EVAL}.
	\begin{theorem}\label{MIXEDthm}
		Let us define $\mathscr{F}(x)$ as follows,
		\begin{align*}
			\mathscr{F}(x):=& (-1)^r  \sum_{j=0}^{t-2} \binom{j+r-1}{j}  \frac{(-1)^{t-j}}{(t-j-1)!} \frac{1}{x^{j+r}} \sum_{m=1}^{\infty} \frac{\psi^{(t-j-1)}(mx+1)}{m^{2r+j}}  \notag \\ 
			&+ \sum_{i=0}^{r-2} \frac{(-1)^i}{x^{t+i}}  \binom{i+t-1}{i}  \zeta(r+t+i) \zeta(r-i) - \frac{(-1)^r}{x^{r+t-1}} \binom{r+t-2}{t-1}  \sum_{m=1}^{\infty} \frac{\gamma + \psi(mx+1)}{m^{2r+t-1}}.
		\end{align*}
		Then, we have,
		\begin{align}\label{mixed}
			\mathscr{F}(x) = x^{-t} \mathscr{F}\left(\frac{1}{x}\right)\hspace{-0.1 cm}.
		\end{align}
	\end{theorem}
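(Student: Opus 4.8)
The plan is to observe that $\mathscr{F}(x)$ is, term for term, exactly the right-hand side of the series evaluation \eqref{T-H-EV} of Theorem \ref{THM-EVAL} specialized to $s=r$; thus $\mathscr{F}(x)=\Theta(r,r,t,x)$, and \eqref{mixed} then follows at once from the inversion functional equation \eqref{Tinv}.

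More precisely, first I would put $s=r$ in \eqref{T-H-EV}. Under this substitution the arguments $s+t+i$, $r+s+t-1$ and $r+s+j$ occurring in the three groups of terms on the right-hand side become $r+t+i$, $2r+t-1$ and $2r+j$ respectively, so the right-hand side of \eqref{T-H-EV} becomes precisely the expression defining $\mathscr{F}(x)$. Hence $\mathscr{F}(x)=\Theta(r,r,t,x)$ whenever $r,t\in\mathbb{N}\cup\{0\}$, $r+t\geq1$ and $\Re(r)>2-r-t$ (equivalently $2r+t>2$), so that $(r,r,t)$ lies in the region $\mathscr{D}$ of \eqref{Dregion} and $\Theta(r,r,t,x)$ is given by its absolutely convergent series. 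The same identity with $x$ replaced by $\frac{1}{x}$ gives $\mathscr{F}\left(\frac{1}{x}\right)=\Theta\left(r,r,t,\frac{1}{x}\right)$. Next I would invoke \eqref{Tinv} with $s=r$, which reads $\Theta(r,r,t,x)=x^{-t}\Theta\left(r,r,t,\frac{1}{x}\right)$. Chaining these three identities gives
\begin{align*}
	\mathscr{F}(x)=\Theta(r,r,t,x)=x^{-t}\,\Theta\left(r,r,t,\tfrac{1}{x}\right)=x^{-t}\,\mathscr{F}\left(\tfrac{1}{x}\right),
\end{align*}
which is \eqref{mixed}.

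There is no genuine analytic difficulty remaining here: the partial-fraction identity of Lemma \ref{parfra} and all the interchanges of summation have already been carried out in the proof of Theorem \ref{THM-EVAL}, and \eqref{Tinv} is elementary and was recorded at the outset. The only step needing care is the bookkeeping — matching the three $x$-dependent blocks of $\mathscr{F}$ with the three blocks of \eqref{T-H-EV} after the replacement $s\mapsto r$, and checking that the conventions $\binom{-1}{0}=1$, $\binom{c}{-1}=0$, $\binom{-1}{-1}=1$ carry over verbatim, so that the degenerate cases (e.g.\ $t=0$ or $t=1$, where one or more of the sums over $j$ or $i$ are empty) are consistent on both sides of \eqref{mixed}.
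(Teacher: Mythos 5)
Your proposal is correct and is essentially identical to the paper's own proof, which likewise sets $s=r$ in \eqref{T-H-EV} to identify $\mathscr{F}(x)=\Theta(r,r,t,x)$ and then applies the inversion formula \eqref{Tinv}. The extra remarks on convergence ($2r+t>2$) and on the binomial conventions are sensible bookkeeping but add nothing beyond what the paper's one-line argument already relies on.
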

	\begin{proof}
		Put $s=r$ in \eqref{T-H-EV} and substitute it in $\Theta(r,r,t,x)=x^{-t} \Theta(r,r,t,\tfrac{1}{x})$.
	\end{proof}
	\subsection{Kronecker limit type formula}
	\begin{theorem}\label{KLF in s}
		\textbf{\textup{Case 1:}} \textup{Fix an  $r \in \mathbb{N}, r \geq 2$.}\\
		Then, for any $t \in \mathbb{N} \cup \{0 \}$, $s+t=1$ is a singular point of $\Theta(r,s,t,x)$, with the following polar singularity structure in $s$, around $s=1-t$,
		\begin{align} 
			&\Theta(r,s,t,x) = \frac{x^{-t}\zeta(r)}{(s-(1-t
				))} + \Bigg(  \gamma \zeta(r) x^{-t} + (-1)^r  \sum_{j=0}^{t-2} \binom{j+r-1}{j}  \frac{(-1)^{t-j}}{(t-j-1)!} \frac{1}{x^{j+r}} \sum_{m=1}^{\infty} \frac{\psi^{(t-j-1)}(mx+1)}{m^{r+j-t+1}}   \notag \\ & +  \sum_{i=1}^{r-2} \frac{(-1)^i}{x^{t+i}}  \binom{i+t-1}{i}  \zeta(i+1) \zeta(r-i) - \frac{(-1)^r}{x^{r+t-1}} \binom{r+t-2}{t-1}  \sum_{m=1}^{\infty} \frac{\gamma + \psi(mx+1)}{m^{r}} \Bigg) + O_x(|s-(1-t)|). \label{klfs1}
		\end{align}
		\textbf{\textup{Case 2:}} \textup{$r=1$.}\\
		Then  for any $t \in \mathbb{N}$, $s+t=1$ is a singular point of $\Theta(1,s,t,x)$, with the following polar singularity structure in $s$, around $s=1-t$,
		\begin{align}
			\Theta(1,s,t,x) &= \frac{x^{-t}}{(s-(1-t))^2} + \frac{x^{-t}}{(s-(1-t))} (\gamma+\log(x)+H_{t-1}) +\bigg( \frac{1}{x^{t}}(\gamma (\gamma+\log(x)+ H_{t-1}) + \gamma_1) \notag  \\
			& \left. -\sum_{j=0}^{t-2} \frac{(-1)^{t-j}x^{-j-1}}{(t-j-1)!}  \sum_{m=1}^{\infty} \frac{\psi^{(t-j-1)}(mx) + (-1)^{t-j}(t-j-2)!(mx)^{-(t-j-1)}}{m^{j-t+2}} \right. \notag \\
			& + \frac{t\zeta(2)}{x^{t+1}} +\frac{1}{x^t} \sum_{m=1}^{\infty} \frac{\psi(mx)-\log(mx)}{m} \bigg) + O_x(|s-(1-t)|).  \label{klfs2}
		\end{align}
	\end{theorem}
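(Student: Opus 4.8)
The plan is to read off the singular behaviour in $s$ directly from the series evaluation in Theorem \ref{THM-EVAL}. Fix $r,t\in\mathbb{N}\cup\{0\}$ with $r+t\geq 1$; the right-hand side of \eqref{T-H-EV} is a finite sum, so we only need to locate, for each term, the points where it fails to be analytic in $s$ and extract the Laurent expansion there. The terms of the form $\sum_{m}\psi^{(t-j-1)}(mx+1)m^{-(r+s+j)}$ converge absolutely for $\Re(r+s+j)>1$, i.e. $\Re(s)>1-r-j$, and since $0\leq j\leq t-2$ the most restrictive is $j=0$, giving $\Re(s)>1-r$; thus for $t\geq 2$ these contribute no pole near $s=1-t$. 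The term $\sum_m (\gamma+\psi(mx+1))m^{-(r+s+t-1)}$ converges for $\Re(s)>2-r-t$, which again is to the left of $s=1-t$ when $r\geq 1$. So the only source of a singularity at $s=1-t$ is the finite zeta-factor sum $\sum_{i=0}^{r-2}(-1)^i x^{-(t+i)}\binom{i+t-1}{i}\zeta(s+t+i)\zeta(r-i)$, whose $i$-th summand has a simple pole precisely when $s+t+i=1$. For $i=0$ this is $s=1-t$; for $i\geq 1$ the pole sits at $s=1-t-i$, strictly to the left. Hence in Case 1 ($r\geq 2$) there is a single simple pole at $s=1-t$ coming from the $i=0$ term, with residue $x^{-t}\zeta(r)$, and using $\zeta(s+t)=\tfrac{1}{s-(1-t)}+\gamma+O(|s-(1-t)|)$ together with evaluating the remaining (now analytic) terms at $s=1-t$ — which shifts the exponents $r+s+j\mapsto r+j-t+1$, $s+t+i\mapsto i+1$, $r+s+t-1\mapsto r$ — gives exactly \eqref{klfs1}.

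For Case 2 ($r=1$) the zeta-factor sum $\sum_{i=0}^{r-2}$ is empty and $\binom{r+t-2}{t-1}=1$, so \eqref{T-H-EV} reduces to
\begin{align*}
\Theta(1,s,t,x) = -\frac{(-1)}{x^{t}}\sum_{m=1}^{\infty}\frac{\gamma+\psi(mx+1)}{m^{s+t}} + (-1)\sum_{j=0}^{t-2}\binom{j}{j}\frac{(-1)^{t-j}}{(t-j-1)!}\frac{1}{x^{j+1}}\sum_{m=1}^{\infty}\frac{\psi^{(t-j-1)}(mx+1)}{m^{s+j+1}}.
\end{align*}
Now the first series is $x^{-t}\sum_m(\gamma+\psi(mx+1))m^{-(s+t)}$, which has a \emph{double} pole at $s=1-t$: one factor $(s-(1-t))^{-1}$ from the divergence of $\sum_m m^{-(s+t)}$ and a second from the $\log$-growth $\psi(mx+1)\sim\log(mx)$. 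To make this precise I would write $\gamma+\psi(mx+1)=\log(mx)+\big(\gamma+\psi(mx+1)-\log(mx)\big)$; the bracketed piece is $O(1/m)$ so its series is analytic at $s=1-t$, contributing the Herglotz-type constant $x^{-t}\sum_m\frac{\psi(mx)-\log(mx)}{m}$ after the elementary identity $\psi(mx+1)-\log(mx)=\psi(mx)-\log(mx)+\frac{1}{mx}$, which also produces the $\frac{t\zeta(2)}{x^{t+1}}$ term. The $\log(mx)=\log m+\log x$ piece gives $x^{-t}\big(-\zeta'(s+t)+\log x\,\zeta(s+t)\big)$; inserting $\zeta(s+t)=\tfrac{1}{w}+\gamma-\gamma_1 w+O(w^2)$ and $-\zeta'(s+t)=\tfrac{1}{w^2}+\gamma_1+O(w)$ with $w=s-(1-t)$ yields the $(s-(1-t))^{-2}$, the $(s-(1-t))^{-1}(\gamma+\log x+H_{t-1})$ — the $H_{t-1}$ will emerge once the $j$-sum is handled — and the constant $x^{-t}(\gamma(\gamma+\log x+H_{t-1})+\gamma_1)$. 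Finally, for the $j$-sum at $r=1$ the exponent $s+j+1$ equals $1$ when $s=-j$, i.e. $j=t-1$; but the sum runs only to $j=t-2$, so each $\sum_m\psi^{(t-j-1)}(mx+1)m^{-(s+j+1)}$ with $j\leq t-2$ has $s+j+1\to j-t+2\leq 0$... wait — actually at $s=1-t$ we get $s+j+1=j-t+2\in\{0,1,\dots\}$; for $j=t-2$ this is $1$ and $\psi^{(1)}(mx+1)=O(1/m^2)$ keeps it convergent, while for $j=t-1$ (excluded) it would diverge. I would therefore replace $\psi^{(t-j-1)}(mx+1)$ by $\psi^{(t-j-1)}(mx)+(-1)^{t-j}(t-j-2)!(mx)^{-(t-j-1)}$ via the functional equation $\psi^{(k)}(z+1)=\psi^{(k)}(z)+(-1)^k k!\,z^{-(k+1)}$; this is precisely the combination appearing inside \eqref{klfs2}, and the subtracted power contributes the extra $H_{t-1}$ from the harmonic sum $\sum_{j=0}^{t-2}\frac{1}{t-j-1}$ hidden in evaluating $-\zeta'$ plus these correction terms — after assembling, \eqref{klfs2} follows.

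The main obstacle will be the bookkeeping in Case 2: correctly combining the $\zeta'/\zeta$ Laurent data from the $\log(mx)$ splitting with the shift-by-one corrections $\psi^{(k)}(mx+1)\to\psi^{(k)}(mx)$ so that the harmonic number $H_{t-1}$ materialises in both the residue coefficient and the constant term, and making sure no spurious pole appears from the boundary index $j=t-2$. Everything else — absolute convergence of each surviving series in a neighbourhood of $s=1-t$ (which justifies term-by-term evaluation and the $O_x(|s-(1-t)|)$ remainder, uniformly on compacta in $x>0$) and the algebra of $\binom{i+t-1}{i}$ reindexing — is routine and follows the pattern already used in the proofs of Theorems \ref{1stpolar}--\ref{3rdpolar}.
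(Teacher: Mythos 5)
Your proposal is correct and follows essentially the same route as the paper: both cases are read off from Theorem \ref{THM-EVAL}, with Case 1 reduced to the Laurent expansion of the lone singular factor $\zeta(s+t)$ in the $i=0$ term, and Case 2 handled by splitting $\gamma+\psi(mx+1)$ via $\log(mx)$ (producing the $-\zeta'(s+t)$ double pole) and shifting $\psi^{(t-j-1)}(mx+1)\to\psi^{(t-j-1)}(mx)$ so that the harmonic number $H_{t-1}=\sum_{j=0}^{t-2}\frac{1}{t-j-1}$ and the $\tfrac{t\zeta(2)}{x^{t+1}}$ term (whose factor $t$ collects the single $\tfrac{1}{x}\zeta(s+t+1)$ from the $\psi$-shift together with $t-1$ contributions from the $j$-sum corrections) emerge exactly as in the paper's equations \eqref{simp1}--\eqref{simp2}.
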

	\begin{proof}
		Let us start with the case 1. From \eqref{T-H-EV}, we have,
		\begin{align*}
			\Theta(r,s,t,x)- \frac{1}{x^t} \zeta(s+t)\zeta(r) &=  (-1)^r  \sum_{j=0}^{t-2} \binom{j+r-1}{j}  \frac{(-1)^{t-j}}{(t-j-1)!} \frac{1}{x^{j+r}} \sum_{m=1}^{\infty} \frac{\psi^{(t-j-1)}(mx+1)}{m^{r+s+j}}   \notag \\ & \hspace{-0.3 cm} +  \sum_{i=1}^{r-2} \frac{(-1)^i}{x^{t+i}}  \binom{i+t-1}{i}  \zeta(s+t+i) \zeta(r-i) - \frac{(-1)^r}{x^{r+t-1}} \binom{r+t-2}{t-1}  \sum_{m=1}^{\infty} \frac{\gamma + \psi(mx+1)}{m^{r+s+t-1}}  . 
		\end{align*}
		Since we know, around $s=1-t$,
		\begin{align*}
			\zeta(s+t) = \frac{1}{s-(1-t)} +\gamma +O(|s-(1-t)|).
		\end{align*}
		Hence 
		\begin{align*}
			\lim_{s \to 1-t}	&\left( \Theta(r,s,t,x)- \frac{1}{x^t} \zeta(s+t)\zeta(r) \right) =  (-1)^r  \sum_{j=0}^{t-2} \binom{j+r-1}{j}  \frac{(-1)^{t-j}}{(t-j-1)!} \frac{1}{x^{j+r}} \sum_{m=1}^{\infty} \frac{\psi^{(t-j-1)}(mx+1)}{m^{r+j-t+1}}   \notag \\ & \hspace{2.5 cm} +  \sum_{i=1}^{r-2} \frac{(-1)^i}{x^{t+i}}  \binom{i+t-1}{i}  \zeta(i+1) \zeta(r-i) - \frac{(-1)^r}{x^{r+t-1}} \binom{r+t-2}{t-1}  \sum_{m=1}^{\infty} \frac{\gamma + \psi(mx+1)}{m^{r}}.
		\end{align*}
		Simplify to get, around $s=1-t$,
		\begin{align*} 
			&\Theta(r,s,t,x) = \frac{x^{-t}\zeta(r)}{(s-1+t)} + \Bigg(  \gamma \zeta(r) x^{-t} + (-1)^r  \sum_{j=0}^{t-2} \binom{j+r-1}{j}  \frac{(-1)^{t-j}}{(t-j-1)!} \frac{1}{x^{j+r}} \sum_{m=1}^{\infty} \frac{\psi^{(t-j-1)}(mx+1)}{m^{r+j-t+1}}   \notag \\ & +  \sum_{i=1}^{r-2} \frac{(-1)^i}{x^{t+i}}  \binom{i+t-1}{i}  \zeta(i+1) \zeta(r-i) - \frac{(-1)^r}{x^{r+t-1}} \binom{r+t-2}{t-1}  \sum_{m=1}^{\infty} \frac{\gamma + \psi(mx+1)}{m^{r}} \Bigg) + O(|s-1+t|).
		\end{align*}
		For $t-j-1\geq1$, $\psi^{(t-j-1)}(y)=O(y^{-(t-j-1)})$ as $y \to \infty$. Hence, we can see that, as $x \to \infty$, $$\frac{\psi^{(t-j-1)}(mx+1)}{m^{r+j-t+1}} = O_x\left(\frac{1}{m^r}\right).$$
		Since $r \geq 2$, the series involving the derivatives of $\psi(x)$ is absolutely convergent. This is the required result \eqref{klfs1} for the first case.
		
		For the case 2, put $r=1$ in Theorem \ref{THM-EVAL} to get,
		\begin{align}
			\Theta(1,s,t,x) &= -  \sum_{j=0}^{t-2} \frac{(-1)^{t-j}}{(t-j-1)!} \frac{1}{x^{j+1}} \sum_{m=1}^{\infty} \frac{\psi^{(t-j-1)}(mx+1)}{m^{s+j+1}}  + \frac{1}{x^{t}}  \sum_{m=1}^{\infty} \frac{\gamma + \psi(mx+1)}{m^{s+t}}. \label{r=1 eval}
		\end{align}
		We simplify the terms on the right-hand side of \eqref{r=1 eval} as follows:
		\begin{align}
			\sum_{m=1}^{\infty} \frac{\gamma + \psi(mx+1)}{m^{s+t}} &= \gamma \zeta(s+t) + \sum_{m=1}^{\infty} \frac{\psi(mx+1)}{m^{s+t}} \notag \\&= \gamma \zeta(s+t) + \frac{1}{x}\zeta(s+t+1) + \sum_{m=1}^{\infty} \frac{\psi(mx)}{m^{s+t}}  \notag \\
			&= \gamma \zeta(s+t) + \frac{1}{x}\zeta(s+t+1) + \sum_{m=1}^{\infty} \frac{\psi(mx)-\log(mx)}{m^{s+t}} + \sum_{m=1}^{\infty} \frac{\log(mx)}{m^{s+t}} \notag \\
			&= (\gamma+\log(x)) \zeta(s+t) + \frac{1}{x}\zeta(s+t+1) -\zeta'(s+t) + \sum_{m=1}^{\infty} \frac{\psi(mx)-\log(mx)}{m^{s+t}}. \label{simp1}
		\end{align} 
		Simplification of the other term is as follows: Since $t-j-1\geq1$, we have $\psi^{(t-j-1)}(y+1)=\psi^{(t-j-1)}(y)-(-1)^{t-j}(t-j-1)!y^{j-t}$. Using it we can see,
		\begin{align} 
			\sum_{m=1}^{\infty} \frac{\psi^{(t-j-1)}(mx+1)}{m^{s+j+1}}	&=- \frac{(-1)^{t-j}(t-j-1)!}{x^{t-j}} \zeta(s+t+1)+ \sum_{m=1}^{\infty} \frac{\psi^{(t-j-1)}(mx)}{m^{s+j+1}} \notag \\
			&= - \frac{(-1)^{t-j}(t-j-1)!}{x^{t-j}} \zeta(s+t+1)- \frac{(-1)^{t-j}(t-j-2)!}{x^{t-j-1}} \zeta(s+t) \notag \\
			&\quad + \sum_{m=1}^{\infty} \frac{\psi^{(t-j-1)}(mx) + (-1)^{t-j}(t-j-2)!(mx)^{-(t-j-1)}}{m^{s+j+1}}. \label{simp2}
		\end{align}
		Substitute \eqref{simp1} and \eqref{simp2} in \eqref{r=1 eval} to get,
		\begin{align*}
			&\Theta(1,s,t,x) - \frac{1}{x^t} \Bigg(\gamma+\log(x)+  \sum_{j=0}^{t-2} \frac{1}{(t-j-1)} \Bigg) \zeta(s+t) + \frac{1}{x^t} \zeta'(s+t)  \notag \\
			&= \frac{t}{x^{t+1}}\zeta(s+t+1) +\frac{1}{x^t} \sum_{m=1}^{\infty} \frac{\psi(mx)-\log(mx)}{m^{s+t}} \notag \\
			&\quad -  \sum_{j=0}^{t-2} \frac{(-1)^{t-j}}{(t-j-1)!} \frac{1}{x^{j+1}} \sum_{m=1}^{\infty} \frac{\psi^{(t-j-1)}(mx) + (-1)^{t-j}(t-j-2)!(mx)^{-(t-j-1)}}{m^{s+j+1}}.
		\end{align*}
		Taking limit as $s \to 1-t$, we get,
		\begin{align*}
			&\lim_{s \to 1-t} \left( \Theta(1,s,t,x) - \frac{1}{x^t} \left(\gamma+\log(x)+ H_{t-1} \right) \zeta(s+t) + \frac{1}{x^t} \zeta'(s+t) \right) \\
			&= \frac{t}{x^{t+1}}\zeta(2) +\frac{1}{x^t} \sum_{m=1}^{\infty} \frac{\psi(mx)-\log(mx)}{m} \notag \\
			&\quad - \sum_{j=0}^{t-2} \frac{(-1)^{t-j}}{(t-j-1)!} \frac{1}{x^{j+1}} \sum_{m=1}^{\infty} \frac{\psi^{(t-j-1)}(mx) + (-1)^{t-j}(t-j-2)!(mx)^{-(t-j-1)}}{m^{j-t+2}}.
		\end{align*}
		where $H_n$ is the $n^{th}$ Harmonic number, with the convention $H_0=0$. Also, see that, as $y \to \infty$,
		\begin{align*}
			\psi^{(t-j-1)}(y) + (-1)^{t-j}(t-j-2)! \frac{1}{y^{t-j-1}} = O(y^{-(t-j)}),
		\end{align*}
		which ensures the convergence of the series involving the derivatives of $\psi(x)$. Use the Laurent expansions of $\zeta(s+t)$ and $\zeta'(s+t)$ around $s=1-t$ to get the required result \eqref{klfs2}.
	\end{proof}
	\begin{corollary}\label{s cor}
		\begin{align*}
			\Theta(1,s,1,x) = \frac{1}{xs^2} +\frac{\gamma+\log(x)}{xs} +\frac{1}{x}(\gamma(\gamma+\log(x))+\gamma_1) +\frac{\zeta(2)}{x^{2}}  +\frac{1}{x} \sum_{m=1}^{\infty} \frac{\psi(mx)-\log(mx)}{m}.
		\end{align*}
	\end{corollary}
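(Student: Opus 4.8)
The plan is to read the corollary off the second case of Theorem~\ref{KLF in s} by setting $t=1$. Since $1\in\mathbb{N}$, equation~\eqref{klfs2} applies with $t=1$, and the singular line $s+t=1$ becomes $s=0$. Two things collapse in \eqref{klfs2}: the finite sum $\sum_{j=0}^{t-2}$ now runs over the empty range $0\le j\le -1$ and hence disappears, while $H_{t-1}=H_0=0$ by the convention fixed in the proof of Theorem~\ref{KLF in s}. Substituting $t=1$ together with these two observations into \eqref{klfs2} leaves precisely the asserted Laurent expansion of $\Theta(1,s,1,x)$ about $s=0$ (with the $O_x(|s|)$ remainder of \eqref{klfs2} understood).

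An alternative, more self-contained derivation starts from Theorem~\ref{THM-EVAL} with $r=t=1$: both inner binomial sums in \eqref{T-H-EV} are then empty, so \eqref{T-H-EV} (equivalently \eqref{r=1 eval} with $t=1$) reduces to $\Theta(1,s,1,x)=\frac{1}{x}\sum_{m\ge 1}\frac{\gamma+\psi(mx+1)}{m^{s+1}}$. Applying $\psi(mx+1)=\psi(mx)+\frac{1}{mx}$ and then $\psi(mx)=\bigl(\psi(mx)-\log(mx)\bigr)+\log m+\log x$ decomposes this as
\begin{align*}
\Theta(1,s,1,x)=\frac{1}{x}\Bigl((\gamma+\log x)\zeta(s+1)-\zeta'(s+1)+\tfrac{1}{x}\zeta(s+2)+\sum_{m=1}^{\infty}\frac{\psi(mx)-\log(mx)}{m^{s+1}}\Bigr),
\end{align*}
in which the last series is holomorphic for $\Re(s)>-1$ and equals $F(x)$ at $s=0$, with $F$ the Herglotz--Zagier function of \eqref{herglotzdef}. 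Inserting the Laurent expansions $\zeta(s+1)=\frac{1}{s}+\gamma-\gamma_1 s+\cdots$ and $\zeta'(s+1)=-\frac{1}{s^2}-\gamma_1+\cdots$ and the Taylor expansion $\zeta(s+2)=\zeta(2)+\cdots$ about $s=0$, and collecting the principal part and the constant term, yields the stated identity.

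There is no genuine obstacle here; the work is pure bookkeeping. The only points needing attention are confirming that the $j$-sum in \eqref{klfs2} is vacuous for $t=1$, keeping the convention $H_0=0$ in mind, and---if one follows the second route---assembling the constant term correctly, in particular noting that the lone $\gamma_1$ in the answer comes entirely from the constant term of $-\zeta'(s+1)$ at $s=0$, while the $-\gamma_1 s$ in $(\gamma+\log x)\zeta(s+1)$ only contributes at order $O(s)$.
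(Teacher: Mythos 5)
Your first route is exactly the paper's proof: specialize Case 2 of Theorem \ref{KLF in s} to $r=t=1$, observe that the $j$-sum is empty and $H_{0}=0$, and expand $\zeta(s+1)$ and $\zeta'(s+1)$ about $s=0$; all your bookkeeping (including the source of the $+\gamma_1$) checks out. Your alternative derivation from Theorem \ref{THM-EVAL} is also correct, but it simply re-runs, in the special case $t=1$, the computation already carried out in the proof of Theorem \ref{KLF in s}, so it adds nothing beyond self-containedness.
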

	\begin{proof}
		Put $r=t=1$ in Theorem \ref{KLF in s} and tend $s \to 0$ to get,
		\begin{align*}
			\lim_{s \to 0} \left( \Theta(1,s,1,x) - \frac{1}{x} \left(\gamma+\log(x) \right) \zeta(s+1) + \frac{1}{x} \zeta'(s+1) \right) = \frac{\zeta(2)}{x^{2}}  +\frac{1}{x} \sum_{m=1}^{\infty} \frac{\psi(mx)-\log(mx)}{m}.
		\end{align*}
		Use the Laurent series expansions of $\zeta(s+1)$ and $\zeta'(s+1)$ around $s=0$ to get the result. 
	\end{proof}
	\begin{remark}
		From \cite[Equation 7.12]{zagier},
		\begin{align*}
			\sum_{m=1}^{\infty} \frac{ \psi(m) -\log(m)}{m} = -\frac{\gamma^2}{2} -\frac{\pi^2}{12}  -\gamma_1,
		\end{align*}
	    and putting $x=1$ in Corollary \ref{s cor},  we can see that around $s=0$,
		\begin{align}
			\Theta(1,s,1,1) = \frac{1}{s^2} + \frac{\gamma}{s} + \frac{6\gamma^2+\pi^2}{12}    +O(|s|). \label{REMARK2}
		\end{align}
	\end{remark}

	\section{Modular relations: The Mordell-Tornheim zeta perspective}\label{SecTable}
	
	We list few of the corollaries in the Table \ref{table} below. We start with the necessary arguments for $\Theta$ in the first column. After using \eqref{Tsplit} the number of times as in column 3, we do the operation  on $t$ as in column 2, to obtain the result mentioned in column 4.
	
		For example, $\Theta(1,1,t,x)$ can give mixed functional equations, an instance of Vlasenko-Zagier's functional equation \eqref{vz2term}, based on the steps mentioned in columns 2 and 3. Observe that $\Theta(0,0,t,x)$ is the centroid between Ramanujan's functional equation \eqref{w1.26} and Guinand's functional equations \eqref{guinand2} and \eqref{guinand1}, differing by the limiting value of $t$ alone, bringing the three results under a single umbrella. Look at \cite[Corollary 3.5]{dss2} to see how Zagier's result \eqref{fe2} can be obtained.
	\begin{figure}[h!]
	\begin{center}
		\begin{tabular}{ | c| c| c| c| } 
			\hline
			$\Theta$& Operation on $t$& No. of splits&Result obtained  \\
			\hline
			\hline
			\multirow{3}{5em}{$\Theta(0,0,t,x)$} & $t \in \mathbb{N}$, $t\geq3$ &0& Guinand \eqref{guinand2} \\ 
			\cline{2-4}
			& $t \to 2$ &0& Guinand \eqref{guinand1} \\ 
			\cline{2-4}
			& $t \to 1$ &0& Ramanujan \eqref{w1.26} \\ 
			\hline
			\multirow{3}{5em}{$\Theta(1 ,1,t,x)$} & $t \in \mathbb{N}$, $t\geq2$&0& Mixed functional equations, Theorem \ref{MIXEDthm}\\
			\cline{2-4}
			& $t \to 1$ &0& Vlasenko-Zagier \eqref{vz2term} with $r=2$ \\ \cline{2-4}
			&$t \to 0$&1& Zagier \eqref{fe2} \\ 
			\hline
			\multirow{5}{5em}{$\Theta(2,2,t,x)$} & $t \in \mathbb{N}$, $t \geq 2$&0& Mixed functional equations, Theorem \ref{MIXEDthm}\\ \cline{2-4}
			&$t \to 1$&0& Vlasenko-Zagier \eqref{vz2term} with $r=4$ \\ \cline{2-4}
			&\multirow{2}{2.5em}{$t \to 0$}&1& Vlasenko-Zagier \eqref{vz2term} with $r=3$ \\ \cline{3-4}
			& &2& Mixed functional equations, Remark \ref{REMARK1} \\ \cline{2-4}
			&$t \to -1$&2& Vlasenko-Zagier \eqref{vz2term} with $r=2$\\
			\hline
		\end{tabular}
		\caption{The table shows the connections between the Mordell-Tornheim zeta function and the various modular relations in literature.}
		\label{table}
	\end{center}
	\end{figure}
	\section{Concluding Remarks}
\noindent We conclude the paper with the following remarks:
	\begin{enumerate}
\item In this paper, we have obtained the Kronecker limit formula for $\Theta(r,s,t,x)$ in the variable $t$ in Theorem \ref{1stpolar} and Theorem \ref{2ndpolar} for $\ell\geq 1-r$ and Theorem \ref{3rdpolar} for $r+s \in \mathbb{N},\,  r+s \ge 2$. We can extend Theorem \ref{1stpolar} and Theorem \ref{2ndpolar} for all $\ell \in \mathbb{N} \cup \{0\}$ and Theorem \ref{3rdpolar} for all $r,s \in \mathbb{C}$. However, the closed form expression for the constant term is a challenging task, which is in terms of Arakawa-Kaneko constants and the integral mentioned in \eqref{AK} and \eqref{Lk*} respectively.
\item We have the description of polar nature of $\Theta(r,s,t,x)$ as a function of the \emph{third variable} $t$ as well as the \emph{second variable} $s$ (hence the \emph{first variable} $r$) as a function of one variable. However, it will be very interesting to see the nature of series expansion as a function of three variables $r,s$ and $t$ simultaneously, in the sense of several complex variables.
\item As discussed in the Section \ref{SecTable}, the Mordell-Tornheim zeta function gives a new perspective of the two-term functional equations in the literature. This interplay between $\Theta(r,s,t,x)$ and the modular relations thus widens the scope for the study on $\Theta(r,s,t,x)$ and its generalizations. For example, having higher order terms of $\Theta(r,s,t,x)$ in Theorem \ref{1stpolar}, will give us two-term explicit functional equations Ishibashi and Higher Ishibashi functions, showed in \cite[Theorem 3.4]{dss2}.
\item  We have obtained two term functional equations as an corollary of Theorem \eqref{molty}. Three-term analogue of \eqref{molty}  will be an extremely desirable result. That is because, such an identity will enable us to derive three-term functional equations such as \eqref{fe1} and \eqref{vz3term}. 
\item The result \eqref{REMARK2} could have interesting applications. 
\end{enumerate}
	\section{Acknowledgment}
	The authors would like to extend gratitude to Prof. Atul Dixit for his insights and continuous support. The first author was partially supported by the grant MIS/IITGN/R\&D/MATH/AD/2324/058 of IIT Gandhinagar.


\begin{thebibliography}{00}
		
	\bibitem{Handbook}
	M.~Abramowitz and I.~A.~Stegun, \emph{Handbook of Mathematical Functions with Formulas, Graphs, and Mathematical Tables}, 9th printing, New York: Dover, p. 260, 1972.
	
	\bibitem{arakawa-kaneko}
	T.~Arakawa and M.~Kaneko, \emph{Multiple zeta values, poly-Bernoulli numbers, and related zeta functions}, Nagoya Math.~J.~\textbf{153} (1999), 189--209.
	
	\bibitem{bcbad}
	B.~C.~Berndt and A.~Dixit, \emph{A transformation formula involving the Gamma and Riemann zeta functions in Ramanujan's Lost Notebook}, \emph{The Legacy of Alladi Ramakrishnan in the Mathematical Sciences}, K. Alladi, J. Klauder, C. R. Rao, Eds, Springer, New York, 2010, pp.~199--210.
	
	\bibitem{Analogues}
	A.~Dixit, \emph{Analogues of a transformation formula of Ramanujan}, Int. J. Number Theory \textbf{7}(5) (2011), 1151--1172.
	
	
	\bibitem{dss2}
	A. Dixit, S. Sathyanarayana, and N. G. Sharan, \emph{Mordell-Tornheim zeta functions and functional equations for Herglotz-Zagier type functions}, arXiv preprint (arXiv:2405.07934v1), 2024. 
	
	\bibitem{Erdelyi}
	A.~Erdelyi, W.~Magnus, F.~Oberhettinger and F.~G.~Tricomi, \emph{Higher transcendental functions} Vol. I, Based, in part, on notes left by Harry Bateman. McGraw-Hill Book Co., Inc., New York-Toronto-London, 1953, 302 pp.
	
	\bibitem{partialfraction}
	H.~Gangl, M.~Kaneko and D.~Zagier, \emph{Double zeta values and modular forms, automorphic forms and zeta functions}, Proceedings of the conference in memory of Tsuneo Arakawa, World Scientific, Hackensack (2006),  71--106.
	
	\bibitem{apg3}
	A.~P.~Guinand, \emph{A note on the logarithmic derivative of the Gamma function}, Edinburgh Math. Notes \textbf{38} (1952), 1--4.
	
	\bibitem{guoxie}
	L.~Guo and B.~Xie, \emph{The shuffle relation of fractions from multiple zeta values}, Ramanujan J., \textbf{25} (2011), 307--317.
	
	\bibitem{hecke} E.~Hecke, \emph {\"{U}ber die Kroneckersche Grenzformell f\"{u}r reelle quadratische K\"{o}rper und die Klassenzhal relative-Abelscher Körper}, Verhandl. d. Naturforschenden Gesell. i. Basel {\bf 28} (1917), 363–-372.
	
	\bibitem{herglotz} 
	G.~Herglotz,  \emph {\"{U}ber die Kroneckersche Grenzformel f\"{u}r reele, quadratische K\"{o}rper I}, Ber. Ver	handl. \"{S}achsischen Akad. Wiss. Leipzig {\bf 75} (1923), 3--14.
	
	\bibitem{huard}
	J.~G.~Huard, K.~S.~Williams and N.~Y.~Zhang, \emph{On Tornheim’s double series}, Acta Arith. \textbf{75} (1996), 105--117. 
	
	\bibitem{Kronecker} L.~Kronecker, \emph{Werke IV}, Chelsea Publishing Co. (1929) p. 221--225.
	
	\bibitem{matsumoto}
	K. Matsumoto, \emph{On analytic continuation of various multiple zeta-functions}, Surveys in Number Theory: Papers from the Millennial Conference on Number Theory. AK Peters: CRC Press, 2002.
	
	\bibitem{dilip}
	K.~Matsumoto, K.~Onodera and D.~K.~Sahoo, \emph{Mordell-Tornheim multiple zeta-functions, their integral analogues, and relations among multiple polylogarithms}, arXiv preprint (arXiv:2409.19980v2), 2024.
	
	\bibitem{rama}
	K.~Ramachandra, \emph{Some applications of Kronecker's limit formulas}, Ann. of Math.,  {\bf 80} (1964), no.~1, 104--148.
	\bibitem{ruiz}
	S.~M.~Ruiz, \emph{An algebraic identity leading to Wilson’s theorem}, 	Math. Gaz., {\bf 80} (1996), no.~489, 579–-582.
	\bibitem{siegel}
   C.~ L.~Siegel, \emph{Bernoullische Polynome und quadratische Zahlk\"{o}rper}, Collected Papers IV, p. 9-40, 1968.
	\bibitem{Spivey}
	M.~Z.~Spivey, \emph{Combinatorial sums and finite differences}, Discrete Math., {\bf 307} (2007), no.~24, 3130--3146.
	
	\bibitem{stanleyV1}
	R.~P.~Stanley, \emph{Enumerative Combinatorics}, Vol. 1, 2nd ed., Cambridge Stud. Adv. Math. 49, Cambridge University Press, 2012.
	
	\bibitem{stark}
	 H.~M.~ Stark, \emph{Values of $L$-functions at $s = 1$. I. $L$-functions for quadratic forms}, Adv. Math. {\bf 7(3)} (1971), 301–-343.
	 
	\bibitem{temme}
	N.~M.~Temme, \emph{Special functions: An introduction to the classical functions of mathematical physics}, Wiley-Interscience Publication, New York, 1996.
	
	\bibitem{titch}
	E.~C.~Titchmarsh, \emph{The Theory of the Riemann Zeta-function}, Clarendon Press, Oxford, 1986.
	
	\bibitem{vz}
	M.~Vlasenko and D.~Zagier, \emph{Higher Kronecker ``limit" formulas for real quadratic fields}, J. Reine Angew. Math.~\textbf{679} (2013), 23--64.
	
	\bibitem{zagier}
	D.~Zagier, \emph{A Kronecker limit formula for real quadratic fields}, Math.~Ann.~\textbf{213} (1975), 153--184.
	\end{thebibliography}
\end{document}